\newcommand{\cC}{\mathcal{C}}
\newcommand{\cK}{\mathcal{K}}
\newcommand{\GG}{\mathbb{G}}
\newcommand{\NN}{\mathbb{N}}
\newcommand{\1}{\mathds{1}}
\newcommand{\kl}[2]{\KL(#1 \!\;\|\; \!#2)}
\newcommand*{\triplenorm}[1]{{\left\vert\kern-0.25ex\left\vert\kern-0.25ex\left\vert #1
    \right\vert\kern-0.25ex\right\vert\kern-0.25ex\right\vert}}
\DeclareMathOperator{\id}{id}
\newcommand{\R}{\mathbb{R}}
\renewcommand{\phi}{\varphi}
\newcommand{\eps}{\varepsilon}
\newcommand*{\E}{\mathbb E}
\newcommand*{\defeq}{\coloneqq}
\newcommand*{\rd}{\mathrm{d}}
\newcommand*{\dd}{\, \rd}
\DeclareMathOperator*{\KL}{KL}
\newcommand{\OT}{\text{OT}}
\newtheorem{theo}{Theorem}
\theoremstyle{remark}
\newtheorem{prop}[theo]{Proposition}
\newtheorem{lem}[theo]{Lemma}
\newtheorem{rmk}[theo]{Remark}
\newtheorem{assum}[theo]{Assumption}
 \newcounter{prob}
\newtheorem{opProb}[prob]{Open Problem}
\providecommand{\reals}{\mathbb{R}}
\providecommand{\diff }{\mathrm{d}}
\providecommand{\eps}{\varepsilon}
\providecommand{\KL}{\operatorname{KL}}
\providecommand{\id}{\operatorname{id}}
\providecommand{\LSE}{\operatorname{LSE}}
\providecommand{\dto}{\rightsquigarrow}
\providecommand{\oh}{\mathrm{o}}
\providecommand{\Oh}{\mathrm{O}}
\providecommand{\1}{\mathds{1}}
\providecommand{\interior}{\operatorname{int}}
\begin{document}

\begin{frontmatter}

\title{The statistics of entropic optimal transport with decreasing regularisation}
\runtitle{EOT with decreasing regularisation}

 \author{\fnms{Gilles} \snm{Mordant}\thanks{gilles.mordant@yale.edu}}
 \address{
 Yale University \\
 }
 \affiliation{Yale University}


\runauthor{G. Mordant}

\begin{abstract}
 We study the statistical properties of the entropic optimal (self) transport problem for smooth probability measures. 
We provide an accurate description of the limit distribution for entropic (self-)potentials and plans as the regularization parameter shrinks with the sample size; this regime is largely unexplored in the prior statistical literature, where $\epsilon$ is typically held fixed.

Additionally, we show that a rescaling of the barycentric projection of the empirical entropic optimal self-transport plans converges to the score function, a central object for diffusion models, and characterize the asymptotic fluctuations both pointwise and in $L^2$. 

Finally, we describe under what conditions the methods used enable to derive (pointwise) limiting distribution results for the empirical entropic optimal transport potentials in the case of two different measures and appropriately chosen shrinking regularization parameter. This endeavour requires a better understanding of the composition of Sinkhorn operators in the small $\eps$-limit, a result of independent interest.
\end{abstract}



\end{frontmatter}


\section{Introduction}
Optimal transport proved itself fundamental  in many areas of mathematics. Since its first appearance with the so-called Monge problem and its revitalization since Kantorovich's reformulation in the 1940's, many fields have embraced the theory, finding one way or another to bring it to fruition. 

In its primary form, for two Borel probability measures $\mu, \nu$ on $ K \subset \R^d$ that are absolutely continuous with respect to the Lebesgue measure,
 the problem involves finding an optimal transport map $T$ as the solution to 
 \[
  \inf_{T: T_\#\mu = \nu } \frac12 \int \|x - T(x)\|^2 \diff \mu(x), 
 \] 
 where $T_\#\mu$ is the pushforward measure, i.e., if $X\sim \mu$ then $T(X)\sim \nu$. Under suitable regularity assumptions (see Assumption~\ref{assum: Main} below), the optimal map $T_0$ exists, is unique and given by the gradient of a convex function \citep{Bre91}.

 This transport map is a central element that resonates far beyond optimal transport. In applications, the practitioners want to estimate $T_0$ based on data.
 In a precise statistical context, given two samples $X_1, \ldots, X_n \sim \mu$ and 
 $Y_1, \ldots, Y_n \sim \nu$, a fundamental question is to provide an estimator $\hat T_n$ of $T_0$.  Two requirements are that this estimator be computable in practise and exhibit good convergence rates. We discuss prior work on this question in Section~\ref{sec: Lit}.

The possibility to compute optimal transport efficiently at scale became clear with Cuturi's paper \citep{cuturi2013sinkhorn}. The idea there was to consider a relaxed version of the problem by adding an entropic regularization term. For a parameter $\eps > 0$, the \emph{entropic optimal transport} problem is given by
\begin{equation}
\label{eq:EOTPrimal}
     \OT_\eps(\mu,\nu):=   \min_{\pi \in \Gamma(\mu, \nu)} \iint  \tfrac12\|x-y \|^2 \diff  \pi (x,y)+ \eps \kl{\pi}{\mu\otimes\nu}\,,
\end{equation}
where $\Gamma(\mu, \nu)$ is the set of joint measures with marginals $\rho$ and $\mu$, and 
\begin{align*}
    \kl{\pi}{\mu\otimes\nu} = \iint \log \Bigl(\frac{\diff\pi}{\diff(\mu\otimes\nu)}\Bigr)\diff \pi\,,
\end{align*}
whenever $\pi$ has a density with respect to the product measure $\mu\otimes\nu$, and is ${+\infty}$ otherwise. Due to the strict convexity of the objective, a unique minimizer, the \emph{entropic optimal (transport) plan}, written $\pi_\eps$, always exists. The right way to think about $\pi_\eps$ is to see it as a \emph{blurred} version of $(\id, T_0)_{\#}\mu$ by a Gaussian smoothing at scale $\sqrt{\eps}$.

Entropic regularization was first introduced as an algorithmic tool to approximate the unregularized optimal transport cost, i.e., when $\eps = 0$, see \citet{PeyCut19} for a book-long exposition.   
Attention soon turned to its statistical properties at fixed $\eps$ (see Section~\ref{sec: Lit} below); the behavior as $\eps \to 0$ has also been studied from a geometric and large-deviations standpoint \citep{bernton2022entropic,nutz2022entropic,conforti2021formula}, and vanishing regularization has been exploited for map estimation by \citet{pooladian2021entropic}. Our focus is instead on distributional limits for the empirical potentials, plans, and their gradients in the regime $\eps_n \to 0$ with $n \to \infty$.

\subsection{Regularized optimal transport}

The primal formulation \eqref{eq:EOTPrimal} is not amenable to computation as it requires optimizing over all couplings. It does, however, admit a dual formulation which is extremely useful, both from an analytical and computational perspective. The dual formulation reads (see e.g., \cite{genevay2019entropy, nutz2021introduction})
\begin{align}\label{eq:EOTDual}
\nonumber
    \OT_\eps(\mu,\nu)=  \quad \sup_{(f,g) \in L^1(\mu\otimes\nu)}
   & \int f (x)\diff \mu(x) + \int g(y) \diff \nu(y) \\
    &\quad- \eps \iint e^{(f(x)+g(y)- \tfrac12 \|x-y\|^2)/\eps} \diff\mu(x)\diff\nu(y)+ \eps.
\end{align}
We call the maximizers in equation~\eqref{eq:EOTDual} the \emph{entropic optimal dual potentials}, written $(f_\eps, g_\eps)$. Note that they are unique up to constant shifts. Moreover, the entropic optimal plan can be readily expressed as a function of the entropic optimal potentials through a primal-dual recovery relationship \citep{Csi75}, i.e., 
\begin{align*}
    \diff\pi_\eps(x,y) = \exp\left(\frac{f_\eps(x)+g_\eps(y)-\tfrac12\|x-y\|^2}{\eps}\right)\diff\mu(x)\diff\nu(y)\,,
\end{align*}
which implies that, at optimality,
\(
    \OT_\eps(\mu,\nu) = \int f_\eps \diff\mu + \int g_\eps \diff\nu\,.
\)
Moreover, the entropic optimal dual potentials are known to satisfy the following relationships, see \cite{mena2019statistical,nutz2021entropic}.
\begin{equation}
\label{eq:dual_opt}
\begin{split}
\int e^{(f_\eps(x) + g_\eps(y) - \frac{1}{2}\|x-y\|^2)/\eps} \diff \mu(x) & =1,  \forall y \in K,\\
\int e^{(f_\eps(x) + g_\eps(y) - \tfrac12\|x-y\|^2)/\eps} \diff \nu(y) & =1,  \quad \forall x \in K\,.
\end{split}
\end{equation}
These relations are crucial for the rest of our work and the system of equations above is called the Schrödinger system.

The \emph{entropic map} between $\mu$ and $\nu$  is the barycentric projection of $\pi_\eps$, which, for $x \in K$, is given by
\begin{equation}
 T_\eps(x):=  \E_{\pi_{\eps}}[Y \mid X = x] = \int y \diff \pi_{\eps}^x(y).
\end{equation} 
Alternatively, by equation~\eqref{eq:dual_opt},
\begin{equation}
\label{eq:entBar}
T_\eps(x) = \frac{\int y e^{\frac 1 \eps(g_\eps(y) - \frac{1}{2}\|x-y\|^2)} \diff \nu(y)}{\int e^{\frac 1 \eps( g_\eps(y) - \frac{1}{2}\|x-y\|^2)} \diff \nu(y)}\,.
\end{equation}
It can also be verified from these optimality conditions that  $T_\eps = \text{id} - \nabla f_\eps$ as shown in \citet[Proposition 2]{pooladian2021entropic}.

A particularly convenient feature of the regularized approach is that, defining $\hat f_\eps, \hat g_\eps$ to be the empirical counterparts of  $f_\eps,  g_\eps$ obtained from $\mu_n , \nu_n$ (the empirical measures built from the samples), the estimator 
\begin{equation}
\label{eq: Estim}
\hat T_\eps(x) \defeq \frac{\int y e^{\frac 1 \eps(\hat g_\eps(y) - \frac{1}{2}\|x-y\|^2)} \diff \nu_n(y)}{\int e^{\frac 1 \eps( \hat g_\eps(y) - \frac{1}{2}\|x-y\|^2)} \diff \nu_n(y)}\,.
\end{equation}
admits a neat form which look like a tilted kernel density estimator. 
This raises the main question of this work: 
\begin{center}
\textit{How should $\eps$ be chosen as a function of $n$, and  under what conditions does $ \hat T_{\eps_n}(x)$ admit a central limit theorem? }
\end{center}
Before discussing what we can achieve, let us first focus on the self-transport case, whose analysis paves the way to understanding the two-measure case. Furthermore, the self-transport case reveals beautiful connections to diffusions.

\subsection{Estimation of scores via self-transport}
\label{se: scoresDetour}

In the context above, what happens when $\mu=\nu$? 
Although mapping a measure to itself is trivial in the unregularized case, the regularized problem is far from vacuous: the entropic self-transport potentials encode geometric information about $\mu$, and Sinkhorn iteration on $(\mu, \mu)$ is closely related to a diffusion.
This connection has been recognised and exploited in recent works, see for instance \citet{sander2022sinkformers,gottwald2025stable}. 

Understanding what happens in this case will illuminate the analysis of the general two-measure problem. When $\mu =\nu$, symmetry forces $g_\eps\equiv f_\eps$, and the Schrödinger system \eqref{eq:EOTDual} collapses to a single equation. Setting   \(  u_\eps(x):= \exp(-f_\eps(x)/\eps)\)
this equation becomes the fixed point relation
\begin{equation}
\label{eq:fixPtEq}
u_\eps(x) = \int 
    \frac1{u_\eps(y)}\exp\left(-\tfrac1{2\eps} \|x-y\|^2\right)
    \diff \mu(y).
\end{equation}
\noindent
A heuristic Fourier argument (see Appendix~\ref{sec: AddRes}) for $\eps \to 0$ suggests that 
\(
-2f_\eps(x)/\eps = \log(\mu) + d \log (2\pi\eps)/2+\oh(1)
\)
which is known to hold in the Gaussian case \citet[Proposition 4]{pooladian2022debiaser} (see also Appendix~\ref{sec: GausEOT}) and was established for certain log-concave measures in \citet{agarwal2026langevin}. This expansion shall be established in Proposition~\ref{prop:secOrdExp}.
Taking the gradient on both sides, one sees the score appearing, which motivates the use of an empirical counterpart in the case where one only has access to a sample.

Thus, anticipating the analysis below, this motivates the score estimator 
\begin{align}
\label{eq: estimator_def}
    \hat{s}_{\eps}(x) \defeq -\frac{2}{\eps}\nabla \hat f_{\eps}(x) = \frac{2}{\eps}\left(x - \frac{\sum_{i=1}^nX_ie^{\hat f_\eps(X_i)/\eps} e^{-\tfrac{1}{2\eps}\|x-X_i\|^2}}{\sum_{j=1}^ne^{\hat f_\eps(X_j)/\eps} e^{-\tfrac{1}{2\eps}\|x-X_j\|^2}}\right)\,,
\end{align}
where $X_1, \ldots, X_n \sim \mu$ are i.i.d. 

Score functions are the central object of diffusion-based generative models, which makes the statistical behavior of \eqref{eq: estimator_def} a question of independent interest.
\begin{center}
\textit{What are the statistical guarantees of the self-transport score estimator?}
\end{center}

The minimax rates established by \citet{wibisono2022convergence} are the natural standard we want to compare with.

\subsection{Main results (informal)}


\begin{assum}
\label{assum: Main}
Throughout, we assume the following.
\begin{enumerate}
 \item  $K$ is a compact convex subset of $\reals^d$, whose boundary $\partial K$ is $\cC^2$. 
    \item \label{ass: A} The densities   $\mu,\nu$ are supported on $K$ and are $\cC^4$. Furthermore, we have that $\ell \le \mu(x), \nu(y) \le L$, for $0<\ell \le  L$. 
  \end{enumerate}
  \end{assum}

The part of Assumption~\ref{assum: Main} pertaining to the support and boundedness of the densities is standard, it is made in works such as \cite{10.1214/20-AOS1997,pooladian2021entropic,manole2024plugin}, among others. 

We state two mains results informally. First, under regularity conditions, for fixed $x$ and choosing $ n^{-2/(d+2)} \ll \eps_n \ll n^{-2/(d+4)}$, it holds that
\[
\sqrt{n} \eps_n^{d/4-1/2}\big(\hat T_{\eps_n} (x)  - T_0(x) \big ) \rightsquigarrow \mathcal{N}( 0, \Sigma_1(x)), 
\]
for $ n\to \infty$,  
where 
\[
\Sigma_1(x) =    \lim_{n \to \infty}\frac1{\mu(x)} \int_{\reals^d} \left( \frac{
 \nabla^2 \phi(x) \xi \xi^\top \nabla^2 \phi(x) \mathcal{F}\left[ e^{-\frac1{\eps_n} D(x,\cdot)   }\right](\xi)}
{(2\pi\eps_n)^{d/4}  \xi^\top \nabla^2 \phi(x) \xi }\right)^2 \diff \xi.
\]  

This shows that $\hat T_{\eps_n} (x)$ is asymptotically normal at a rate slower than the parametric $\sqrt{n}$ by a factor depending on the dimension, the exact statement is Theorem~\ref{thm: twoMeasures} below.
Fundamentally, the result builds on our understanding of the composition of Sinkhorn operators for decreasing regularization. This result identifies the operator that drives the fluctuations of the empirical potentials in the two-measure case, and reveals a striking parallel with the linearization of the Monge--Ampère equation developed in \cite{manole2024plugin}, see Appendix~\ref{sec: Manole} for an overview.

\noindent
To our knowledge, this is the sharpest pointwise description of the fluctuations of entropy-regularized barycentric maps around the unregularized transport map currently available. 

Our second result concerns the score estimator defined in \eqref{eq: estimator_def}.
Upon choosing  $ n^{-2/(d+2)} \ll \eps_n \ll n^{-2/(d+6)}$, we prove 
\[
\sqrt{n} \eps_n^{d/4+1/2}\big( \hat{s}_{\eps_n}(x) - \nabla \log \mu (x)\big) \rightsquigarrow \mathcal{N}(0, \Sigma_2(x)), 
\]
as $n\to \infty$ where $\Sigma_2(x)$ is some covariance matrix. This is the key message of Theorem~\ref{thm: Plans}.

We also establish $L^2(\mu)$ rates for $\hat{s}_{\eps_n}$ (Theorem~\ref{thm:main_stat_result}), since this is the natural metric for applications to diffusion models.

\subsection{Related literature}
\label{sec: Lit}

 The literature on optimal transport is by now so vast that any complete overview is impossible; we restrict ourselves to works directly relevant to the present paper, organised around our three contributions.

\subsubsection{Limit theorems for (entropic) optimal transport}

A substantial body of work has studied distributional limits and convergence rates for variants of the optimal transport problem. Listing in detail the substance of these contributions would take us too far afield; we warmly invite the reader to consult the following references and the works they cite. For the cost, see 
\cite{sommerfeld2018inference,del2019central,hundrieser2022unifying,del2023improved,hundrieser2023empiricaloptimaltransportestimated, manole2024sharp,goldfeld2024limit,goldfeld2024statistical}.
For the plan, \cite{klatt2022limit,manole2023central,gonzalez2022weak, goldfeld2024limit}  are relevant references. For an overview of the broader statistical theory of entropic optimal transport, the reader can consult  \citet{chewi2024statistical}.

The work of \cite{gonzalez2022weak} deserves special mention: to our knowledge, it is the only prior reference providing a clear, detailed description of the asymptotic fluctuations of empirical entropic potentials in terms of Sinkhorn operators. Our Theorem~\ref{thm: twoMeasures}  extends this picture to the regime $\eps_n  \to 0$, where the operators vary with $\eps_n$ and a more delicate analysis is required. More generally, all of the works cited above operate either in the unregularized setting or at fixed $\eps >0$; the regime $\eps_n  \to 0$ with $n  \to \infty$, which is the focus of the present paper, has remained largely unaddressed. 

Concurrently, \citet{dou2024optimal} and \citet{zhang2024minimax} have obtained minimax-optimal score estimators under milder smoothness assumptions. In particular, Lipschitz continuity of the score suffices, whereas our Theorem 11 requires one additional derivative. Classical work on kernel density estimation has further shown that KDE-based estimators can adapt to intrinsic dimension \citep{kim2019uniform,jiang2017uniform}. \citet{groppe2023lower} have shown that the empirical EOT cost obeys a lower complexity adaptation principle, with rates depending on the simpler of the two measures rather than the ambient dimension. Whether a similar phenomenon holds for the potentials or their gradients remains open; our numerical experiments in Section 6.2 suggest it does not.

Other relevant works on the small-$\eps$ behavior of entropic optimal transport include \citet{mena2019statistical,nutz2022entropic,pal2024difference,bernton2022entropic,conforti2021formula}.
In all that we do, the only cost of interest is the quadratic one, as the theory is particularly neat in this case. Just like in  \citet{pal2024difference}, the approach should work for more general costs provided that the Bregman divergence based on the optimal transport potentials (the analog of $D$ in Section 2) is sufficiently well behaved. The interest for more general (and even estimated) costs is clearly important for recent developments in OT; see \citet{grave2019unsupervised,pooladian2024neural,hundrieser2024empirical}.

\subsubsection{Estimation of transport maps} The statistical estimation of the Brenier map has received much attention in recent years. \cite{hutter2021minimax} established minimax rates for smooth transport maps. The entropic map was subsequently analysed by \cite{pooladian2021entropic,pooladian2023minimax} as a computationally tractable estimator of the optimal transport map for the quadratic cost, building on \cite{hutter2021minimax}. The same map was studied in \citet{rigollet2025sample} using a sample-complexity argument. \citet{manole2024plugin} propose a different estimator, based on plugging a kernel density estimator into the Monge--Ampère equation, and prove central limit theorems for it; we sketch their approach in Appendix~\ref{sec: Manole} because of the beautiful parallels with our Theorem 16. A completely different approach, based on a sum-of-squares reformulation of the semi-dual formulation, is provided in \cite{vacher2024optimal}; their method is both computationally attractive and nearly achieves the minimax rate (up to logarithmic factors).
All of the above estimators come either with finite-sample rates at fixed regularization (or no regularization at all), or with central limit theorems whose centering and scaling depend on the regularization being held fixed. Theorem 16 provides, to our knowledge, the first central limit theorem for the entropic map $\hat T_{\eps_n}$ centered at the unregularized Brenier map $T_0$ and with $\eps_n\to 0$.

\subsubsection{Self-transport, Sinkhorn divergence, and score estimation} 

The entropic optimal self-transport problem has been studied independently by, e.g., \cite{genevay2019sample,feydy2019interpolating} in general-purpose machine learning, and by  \cite{marshall2019manifold}  in the context of manifold learning. The Laplace-method calculation of  \cite{marshall2019manifold}  is the starting point for our second-order expansion in Section 3.1.
In machine learning, the entropic self-OT problem appears most prominently in the definition of the Sinkhorn divergence,
\begin{align*}
    S_\eps(\mu,\nu)\defeq \OT_\eps(\mu,\nu) - \frac12\bigl(\OT_\eps(\nu,\nu) + \OT_\eps(\mu,\mu)\bigr)\,,
\end{align*}
which satisfies $S_\eps(\mu,\nu) \geq 0$ with equality if and only if $\mu=\nu$ \cite[Theorem~1]{feydy2019interpolating}.  $S_\eps(\rho,\mu)$ is a \emph{debiased} version of $\OT_\eps(\mu,\nu)$,  removing the leading-order entropic bias. This will become particularly apparent in our small epsilon  expansions. Statistical and algorithmic implications of the Sinkhorn divergence were considered in \cite{feydy2019interpolating,pooladian2022debiaser}, among others. \citet{marshall2019manifold} proposed to use self-transport to perform manifold learning, which was further extended to the empirical case by \citet{landa2021doubly}.  In a similar vein, the work of \citet{landa2023robust} used self-transport as a robust tool for density estimation.

Finally, we note that the Kullback--Leibler regularization is not the only interesting one. Optimal transport with $L^2$ regularization has been considered by \citet{blondel2018smooth}, who noticed that it gives sparse transport plans. Building on this, \citet{zhang2023manifold} proposed to use to carry out manifold learning relying on $L^2$ regularized self-transport; the approach is robust to heteroskedastic noise in high-dimensional spaces.
 Further developments on quadratically regularized OT include  \citet{garriz2024infinitesimal,gonzalez2025sparse,gonzalez2025sample,liu2025beyond}.

 For a probability measure $\rho\propto e^{-V}$, the estimation of score functions has, classically, received less attention than the closely related  estimation of densities or gradient of densities, which is a standard statistical topic \citep{silverman2018density,Tsy09}.  
A procedure to estimate log-concave densities is 
explained in the  review paper by \citet{samworth2018recent}. The methodology therein relies on the so-called log-concave projection;  an optimisation problem that is quite intricate to solve in higher dimensions. Even though minimax rates are known in that case, they only apply to the estimation of the density and not its gradient.  \citet{seregin2010nonparametric} obtain $n^{-2/(d+4)}$ as pointwise minimax rate with respect to the absolute loss function at a point of sufficient regularity.
Still, the global picture is far from complete.  
Another view on the problem could be that of estimation of the gradient of a shape constrained function. To the best of our knowledge, that approach hasn't been pursued in general dimensions.

A major line of research follows the score-matching framework of \citet{hyvarinen2005estimation} in which the score is estimated within a parametric class.  \citet{sriperumbudur2017density}  extended it to a reproducing kernel Hilbert space (RKHS) framework and a further extension was proposed by \citet{zhou2020nonparametric}.
More recently, \cite{song2020sliced} proposed a sliced version of score matching. Statistical efficiency of score matching under functional inequalities was studied by \citet{koehler2022statistical}. While completing this work, we became aware of the paper by \citet{wibisono2024optimal}, in which the authors propose an estimator of the score based on (truncated) kernel density estimation (KDE). By deriving suitable lower bounds, they show that their estimator is minimax optimal up to logarithmic factors. Their estimator and analysis are substantially different from ours: we use entropic regularization rather than KDE truncation, and our analysis goes through the linearization of the Schrödinger fixed-point equation rather than through bias-variance decomposition of a kernel estimator.

\subsection{Outline of the paper}

 Section~\ref{sec: Background} introduces the Bregman divergence $D$ associated to the Kantorovich potentials, the central geometric object in all subsequent expansions. Section \ref{sec:main_limit} derives small-$\eps$ expansions of the entropic potentials in the self-transport case (Proposition~\ref{prop:secOrdExp}) and in the two-measure case (Propositions~\ref{prop: TwoMeasFixPt} and \ref{prop: otBregmanExp}), the latter resting on a WKB ansatz. Section~\ref{sec: CLT} contains the main central limit theorems: Theorems~\ref{thm: multivariate_limit} and \ref{thm: Plans} in the self-transport setting and Theorem~\ref{thm: twoMeasures} in the two-measure setting. The two-measure CLT is built on a linearization of the empirical fixed-point system (Proposition \ref{prop: Inversion}) and a refined description of the composition of Sinkhorn operators at small $\eps$ (Theorem~\ref{thm: LimOp}), which is of independent interest. Section~\ref{sec: AppliScore} translates the pointwise analysis into $L^2(\mu)$ rates for the score estimator (Theorem~\ref{thm:main_stat_result}). Section~\ref{sec:Numerics} reports numerical experiments that confirm the limiting variance of Theorem~\ref{thm: multivariate_limit}  and probe possible intrinsic-dimension adaptation. Section~\ref{sec: Conc} concludes with three open problems. All proofs are deferred to the Appendix.

\subsection{Notation}

In the sequel, all random variables will be defined on a common, sufficiently rich probability space $(\Omega, \mathcal{A}, \mathrm{P})$.  
We will slightly abuse notation and use both $\mu$ as a probability measure and to mean its density $\diff \mu/ \diff \lambda$, where $\lambda$ is the Lebesgue measure. 
 For a probability measure $\mu$, we write $ \mu_n$ for the empirical measure of $\mu$ based on $n$ i.i.d.\ samples. 
 Convergence in distribution is denoted by $\dto$. 
As usual, $a \lesssim b $, means that there exists a constant $C>0$ such that $a \le Cb$. We sometimes use $ a_n\asymp b_n$ for asymptotic equivalence as $n\to \infty$. We  set $\mathbb{G}_n = \sqrt{n}(\mu_n- \mu)$. For a function $f \in L^1(\mu)$, we use $\mu f \defeq \int f \dd \mu$. 
$\mathcal{F}$ denotes the Fourier transform operator.
The Fenchel--Legendre transform of a function $\phi(y)$ defined on a domain $\mathcal{Y}$ is $\phi^*(x):= \sup_{y\in \mathcal{Y}}( \langle x, y \rangle - \phi(y) )$. The set of $k$ times continuously differentiable functions is denoted $\cC^k$. The Hessian of a function $f$ at a point $x$ will be denoted $\nabla^2f(x)$. 
Finally, we set 
\[
k_\eps(x,y) := \frac{\exp\left( -\tfrac1{2\eps} \|x-y \|^2\right)}{ (2\pi\eps)^{d/2}}.
\]
 

\section{Background on optimal transport}
\label{sec: Background}

Let us turn to the population unregularized problem. We refer the reader to \citet{Vil08,San15,villani2021topics} for in-depth introduction to the unregularized case.
Note that the unregularized problem also admits a dual formulation; denote a pair of dual optimizers by $(f_0, g_0)$.
Because of Assumption~\ref{assum: Main}, Caffarelli's regularity theory \citep{Caffarelli1996Bound2, Urbas1997} implies that the potentials are 
$\mathcal{C}^4,\alpha$. 
Let $\varphi := \|\cdot\|^2/2 -f_0  $ and $\psi := \|\cdot\|^2/2 -g_0 $ be convex potentials.
Let 
\[
D(x,y) := \varphi(x) + \psi(y) - \langle x, y \rangle, 
\]
be the Fenchel--Young gap or Bregman divergence associated to optimal transport; \emph{it is the central geometric object governing all our expansions.}
It is a Bregman divergence\footnote{For $F$ a convex, continuously differentiable function on a convex set $K\subset \reals^d$, the Bregman divergence associated with $F$ between two points $p,q$ is given by $F(p)- F(q) -\langle \nabla F(q), p-q\rangle$. }, as, by duality, $\psi$ is the Fenchel--Legendre transform of $\varphi$, i.e., 
\[
\psi(y) = \sup_{x \in \reals^d} \big( \langle x, y \rangle - \varphi(x)\big). 
\]
Owing to smoothness, this implies that 
\(
\psi(y) = \langle [\nabla \varphi]^{-1}(y), y \rangle - \varphi \big( [\nabla \varphi]^{-1}(y)\big).
\)
From which we write, for $z= [\nabla \varphi]^{-1}(y)$,
\begin{equation}
D(x,y) = \varphi(x) - \varphi \big( z \big) - \langle \nabla \varphi (z) , x-z \rangle.
\end{equation}
An important fact of Bregman divergences is that they are \emph{dually flat}, i.e., 
\begin{equation}
\label{eq: DualFlat}
\nabla^2 \psi \circ \nabla \varphi = \big[\nabla^2 \varphi\big]^{-1}.
\end{equation}
This identity will be used repeatedly to convert between $x-$ and $y-$derivatives.

In the sequel, we will denote by $x^*$ the image of the point $x$ by the optimal transport map\footnote{The latter exists owing to our assumptions on the densities.}, i.e., $x^*:=T_0(x)$.
A key consequence is the local quadraticity of $D$; for $z$ near $x$,
\begin{equation}
\label{eq: LocalQuad}
D(z,\, x^*) = \tfrac{1}{2}(z - x)^\top \nabla^2\varphi(x)\,(z-x) + o\!\left(\|z - x\|^2\right),
\end{equation}
where we used that $\nabla\varphi(x) = x^*$ cancels the linear term in the Taylor expansion of $\varphi(z)$, and that $D(x, x^*) = 0$. Thus $D(\,\cdot\,, x^*)$ is locally equivalent to the Riemannian metric induced by $\nabla^2\varphi(x)$, i.e., the Hessian of the Kantorovich potential.


\newpage
\section{Fixed point equations and small $\epsilon$ developments}
\label{sec:main_limit}

\subsection{One measure case}
\label{sec: ExpOneMeasure}

As already announced, we start from equation~\eqref{eq:fixPtEq}. Let us first prove the following pointwise result which is inspired by the work of \citet{marshall2019manifold}.

\begin{assum}
\label{assum: seq}
Let $\mu$ be a measure such that sequence of rescaled self potentials 
\[
\left(  \frac{\exp(  f_\eps/ \eps) }{(2 \pi \eps)^{d/4}}\right)_\eps 
\]
is bounded in $\cC^4$.
\end{assum}

\begin{rmk}[Scope of Assumption 2 and related literature] Assumption 2 is a uniform-in-$\eps$ bound on the rescaled self-potentials, needed in Proposition~\ref{prop:secOrdExp} to differentiate the expansion to second order. It is implicit  in \citet{marshall2019manifold}.
 We prove a weaker statement without derivative control in Proposition~\ref{prop: PotSingMeasL2}.
 
Assumption~2 holds in the Gaussian case.
Further,  for $\cC^6$ distributions supported on compact manifold of dimension at most 5, a similar expansion is known to hold, see \citet[Theorem~2.2, Eq.~(11)]{landa2023robust}. 
In a slightly different setting,  \citet[Lemma~4.6]{deb2023wasserstein}  derive an analogous expansion from log-concavity-type hypotheses relying on a Laplace Method.
\end{rmk}

\begin{prop}[Second-order expansion]
\label{prop:secOrdExp}
Under Assumption~\ref{assum: seq}, for any fixed point $x\in \interior(K)$, 
as $\eps \to 0$, the following holds.
\begin{align}\label{eq:secOrdExp}
     \nonumber
        \exp\left(\tfrac{-2f_\eps(x)}{\eps}\right)  
    = \mu(x) (2\pi\eps)^{d/2}   \left( 1 + \eps  \left(  \operatorname{tr} \tfrac{\nabla^2\mu(x)}{4\mu(x)} - \tfrac{\|\nabla \log \mu(x) \|^2}{8} \right)+ \oh(\eps)\right)\,.
\end{align}
\end{prop}

The proof is provided in the Appendix~\ref{sec:secOrdExp}.
\begin{rmk}[Leading bias and harmonic characteristic]
\label{rmk: harmChar}
As noted in \citet{agarwal2024iterated}, the term of order $\eps$ in the expansion is the harmonic characteristic and can be interpreted as a ``mean acceleration'' of a Langevin bridge. 
The second order expansion puts on solid ground the heuristic Fourier computation hinted at in the introduction that motivated the score estimation application. It also gives a glimpse into the links between regularization and diffusion. 
\end{rmk}

\begin{prop} 
\label{prop: PotSingMeasL2}
Under Assumption~\ref{assum: Main}, it holds 
\[
\lim_{\eps\to 0} \left \| - 2 f_\eps/\eps - \log \big( \mu (2 \pi \eps)^{d/2} \big) \right \|_{L^2 (\mu)} = 0. 
\]
\end{prop}
The proof is in 
Appendix~\ref{sec: PotSingMeasL2}

\begin{rmk}[Expansion in $L^\infty$]
The expansion above is not fine enough for a stronger topology. The correct form is $ \log \big( \mu (2 \pi \eps)^{d/2} \omega_{\eps, K} \big) $ where $\omega_{\eps, K}$ is a sequence of functions adjusting for the fact that the Gaussian kernel doesn't integrate to 1 if the point of evaluation is too close to the boundary.  
\end{rmk}

\subsection{Two-measure case and Léger--Vialard expansions}

To get results as in the self-transport case, a first fundamental question is whether there exists the same type of development as  for the potentials $f_\eps, g_\eps$ as  $\eps\to 0$. The analytics foundations were laid in  \citet{leger2023geometric} and such expansions have been announced at a BIRS Workshop by \citet{Flav}. Remark also that this type of developments further holds true in the Gaussian case, see \citet{mallasto2022entropy}.
Proposition~\ref{prop: TwoMeasFixPt} below records the explicit second-order form in our setting.

Let us shift the optimal potential of \eqref{eq:dual_opt} by some well chosen quantities and doing so introduce  $\tilde  f_\eps,  \tilde  g_\eps$ as follows
\begin{align}
\label{eq: ExpTwoMeas}
\nonumber
 f_\eps(x)  &= \tilde  f_\eps(x) +  f_0(x) - \frac{\eps}{2} \log   \mu(x) - \frac{\eps d}4\log (2\pi \eps) \\
g_\eps(y) &= \tilde  g_\eps(y) + g_0(y) - \frac{\eps}{2} \log   \nu(y) - \frac{\eps d}4\log (2\pi \eps) .
\end{align}
We will see that $\tilde  f_\eps,  \tilde  g_\eps$ will be negligible in a suitable sense when $\eps \to 0$.

After that change of variable, the behavior of the pair of optimal functions $\tilde  f_\eps(x), \tilde  g_\eps(y)$ can be recovered\footnote{Mathematically, not numerically.} from the Schrödinger system.
\begin{prop}
\label{prop: TwoMeasFixPt}
Recall the definition in equation~\eqref{eq: ExpTwoMeas}. It holds that 
\label{eq: TwoMeasFixPt}
\[
\lim_{\eps \to 0} \ \Big \|\tilde f_\eps(x)/\eps  \Big \|_{L^2(\mu)} = 0 \quad \text{and} \quad \lim_{\eps \to 0} \   \Big  \|\tilde g_\eps(x)/\eps  \Big \|_{L^2(\nu)} = 0. 
\]
\end{prop}
The proof is deferred to Appendix~\ref{sec: TwoMeasFixPt}.

\begin{rmk}[Sinkhorn divergence and debiased map estimation]
\label{rem:debias}
The expansions of Proposition~\ref{prop:secOrdExp} and equation~\eqref{eq: ExpTwoMeas} help inform the Wasserstein gradient of the Sinkhorn divergence~\citep{feydy2019interpolating}. Decorating the potentials with the measures the problems involve,  it is true that  $\nabla_{\!W}S_\eps(\mu, \nu) = \nabla f^{\mu \leftrightarrow \nu}_\eps - \nabla f^{\mu \leftrightarrow \mu}_\eps$. The leading $\eps$-bias of each potential is the same score correction $-\tfrac{1}{2}\nabla\log\mu$, so the two cancel exactly: at the population level, the Sinkhorn-divergence gradient recovers $\nabla f_0$ with bias $o(\eps)$, an order of magnitude better than the entropic potential alone, which has bias $O(\eps)$. This suggests that a debiased map estimator built from the Sinkhorn divergence should outperform the barycentric map $T_{\eps_n}$ in the small-$\eps$ regime. The empirical picture is more delicate. \citet{pooladian2022debiaser} show that this debiasing benefit is genuine when $\eps$ is small and the sample size is large, but can be statistically detrimental in the opposite regime. The lower complexity adaptation enjoyed by the unregularised cost~\citep{groppe2023lower} does not extend to the empirical Sinkhorn divergence, which pays the full ambient-dimensional sample-complexity price~\citep{rigollet2025sample}. \end{rmk}

\subsection{Second order expansion in the two-measure case}
\label{sec: ExpTwoMeasures}
In the self-transport case, Proposition~\ref{prop:secOrdExp} allowed (under some assumptions) to get to a second order expansion, relying on the Laplace method.

To derive such a result, we make a WKB \emph{ansatz}%
\footnote{This type of assumption is known as the WKB approximation in 
physics, a general method that can be applied to the Schrödinger equation 
for instance. The first mathematician to set it on solid ground was 
\citet{WKB}.}, i.e., we require that
\[
	\tilde{f}_\eps(x) + \tilde{g}_\eps(y) 
=: \eps\check{f}(x) + \eps\check{g}(y) + o(\eps).
\]
Based on the fixed-point equations above, we get the system
\begin{align*}
1 =   \int    \frac{e^{(\tilde f (x) + \tilde g(y) -D(x,y))/\eps}}{(2\pi \eps)^{d/2} }\sqrt{\tfrac{\nu(y) }{\mu(x)}} \diff y \\
1 =   \int    \frac{e^{(\tilde f (x) + \tilde g(y) -D(x,y))/\eps}}{(2\pi \eps)^{d/2}} \sqrt{\tfrac{\mu(x) }{\nu(y)} } \diff x.
\end{align*}

\begin{prop}[Second order expansion in the two-measure case]
\label{prop: otBregmanExp}
In the setting above,  for $y$ in the interior of $\operatorname{supp} \nu$, set $x_0 := T^{-1}(y)$ and $A(y) := [\nabla^{2} \phi(x_0)]^{-1}$. Then as $\eps \to 0$,
\begin{align*}
&  \check{f}(T^{-1}(y)) + \check{g}(y)\\
  &= -\frac{1}{2 \sqrt{\mu(x_0)}} \operatorname{tr}\!\bigl( A(y) \nabla^{2}\!\sqrt{\mu}(x_0) \bigr)
     + \frac{1}{2 \mu(x_0)} \nabla\!\sqrt{\mu}(x_0)^{\top} A(y) \nabla\!\sqrt{\mu}(x_0) \\
  &\quad
    + \frac{1}{6 \sqrt{\mu(x_0)}} \sum_{i j k \ell}
      \phi_{ijk}(x_0) \partial_\ell \sqrt{\mu}(x_0)
      \bigl( A_{ij} A_{k\ell} + A_{ik} A_{j\ell} + A_{i\ell} A_{jk} \bigr) \\
  &\quad
    + \frac{1}{24} \sum_{i j k \ell}
      \phi_{ijk\ell}(x_0)
      \bigl( A_{ij} A_{k\ell} + A_{ik} A_{j\ell} + A_{i\ell} A_{jk} \bigr) \\
  &\quad
    - \frac{1}{72} \sum_{\substack{i j k \\ i' j' k'}}
      \phi_{ijk}(x_0) \phi_{i'j'k'}(x_0)
      \sum A_{\sigma_1 \sigma_2} A_{\sigma_3 \sigma_4} A_{\sigma_5 \sigma_6}
    + o(1),
\end{align*}
where the last sum ranges over the $15$ perfect matchings of $\{i, j, k, i', j', k'\}$ into three pairs, and $A_{ij} = A_{ij}(y)$. The symmetric identity for $\check{f}(x) + \check{g}(T(x))$, obtained by interchanging $(\mu, \phi)$ and $(\nu, \phi^{*})$ in the first equation, together with the above determines $\check{f}$ and $\check{g}$ up to an additive constant.
\end{prop}

The proof is provided in Appendix~\ref{sec:otBregmanExp}.
With the expansions of Sections~\ref{sec: ExpOneMeasure} and \ref{sec: ExpTwoMeasures} in hand, the entropic potentials are understood to the $\Oh(\eps)$ bias. In the next section we turn to the fluctuations around these expansions and establish the main central limit theorems.


\section{Central limit theorems}
\label{sec: CLT}
We now state the main distributional results of the paper: pointwise central limit theorems for the empirical entropic potentials and their gradients in the regime  $\eps_n \to 0
$ as $n \to \infty$.

 Section~\ref{sec: OneMesCLTs} treats the self-transport case, where the analysis is cleanest and where the score-estimation application of Section~\ref{sec: AppliScore} is built. Section~\ref{sec: TwoMeas} treats the general two-measure case. The heavier machinery required there, namely the linearization of the empirical Schrödinger system and an asymptotic description of the composition of Sinkhorn operators, is collected in Proposition~\ref{prop: Inversion}, Theorem~\ref{thm: LimOp}, and Proposition~\ref{prop: SinkFluctKer} before the final CLT, Theorem~\ref{thm: twoMeasures}, is stated.

\subsection{One measure case}
\label{sec: OneMesCLTs}
Central limit theorems for entropic potentials at fixed $\eps$ were established by \citet{gonzalez2022weak} and \cite{goldfeld2022limit}. 
Our proof proceeds by a Taylor linearization of the self-transport fixed-point functional around the population potential, followed by inversion of $\id +K_{\eps_n}$ and an application of the Lindeberg–Feller theorem. The main technical difficulty absent from the fixed- $\eps$
setting is that the linearization requires a fine control on  the empirical fluctuations, the remainder, and the resolvent itself as they each carry their $\eps_n$-dependent scale.  
The limiting variance differs from the fixed-$\eps$ case: because the kernel $k_{\eps_n}$
localizes as $\eps_n \to 0$, evaluations at distinct points are asymptotically independent, in contrast with the fixed-$\eps$ regime where the kernel maintains long-range correlations.

\begin{theo}[Limit distribution for the empirical potentials]
\label{thm: multivariate_limit}
    Consider an i.i.d.\ sample $X_1, \ldots, X_n \sim \mu$ as above and suppose that Assumption~\ref{assum: Main} holds. Fix $m\in \NN$ and pairwise distinct  points $x_1, \ldots, x_m \in \interior(K)$. Then, as $n \to \infty$,
\begin{multline*}
\sqrt{n}\, \eps_n^{d/4}\begin{pmatrix}
\frac{ \hat{f}_{\eps_n}(x_1) - f_{\eps_n}(x_1)}{\eps_n}\\
\vdots \\
\frac{\hat{f}_{\eps_n}(x_m) - f_{\eps_n}(x_m) }{\eps_n}
\end{pmatrix} \\
\dto \mathcal{N}\left(0_m, \frac{C_3}{4 (2\pi)^{d/2}} \operatorname{diag}  \Big( \mu(x_1)^{-1}, \ldots,\mu(x_m)^{-1}\Big)  \right),
\end{multline*}
 provided that 
$
\sqrt{n} \eps_n^{d/4}/\sqrt{\log n}   \to \infty$, 
where 
\[
C_3 :=  \sum_{0\le \kappa,\kappa' \le \infty } 2^{- \kappa-\kappa'} \sum_{\eta=0}^\kappa\sum_{\eta'=0}^{\kappa'} {\kappa \choose  \eta} ( -1)^\eta ( -1)^{\eta'} {\kappa' \choose  \eta'} 
( \eta + \eta' +2)^{-d/2}.
\]
\end{theo}
The proof of that theorem is in Appendix~\ref{sec: multivariate_limit}. We further provide additional elements on the constant $C_3$ in Appendix~\ref{sec: RomanNumbers}.
The diagonal structure arises because the fluctuations are determined, up to negligible terms, by the behavior of the kernel in a $\sqrt{\eps_n}$-neighborhood of each $x_i$; the limiting distribution at distinct points therefore decouples in the limit.

\begin{theo}
\label{thm: Plans}
Consider the setting of Theorem~\ref{thm: multivariate_limit} again and suppose that Assumption~\ref{assum: seq} holds.
Then, for any fixed $x\in \interior(\operatorname{supp}(\mu))$ and any sequence $\eps_n$ such that  $ n^{-2/(d+2)} \ll \eps_n \ll n^{-2/(d+6)}$ as $n \to \infty$,
\[
\sqrt{n}\, \eps_n^{d/4+1/2} \Big(  \hat{s}_{\eps_n}(x) -  \nabla \log \mu (x) \Big) \rightsquigarrow \mathcal{N}\big(0_d, \Sigma_d(x)\big),
\]
 for some covariance matrix $\Sigma_d(x) = C_d\mu(x)^{-1} I_d$, for some constant $C_d$ depending on the dimension only. 
\end{theo}
The proof is in Appendix~\ref{sec: Plans}.

Let us comment on the proofs of these two first main results. As the regularization parameter depends on the number of data points, the first key insight is to linearize by hand the difference between the population fixed point equation \eqref{eq:fixPtEq} and its empirical counterpart. A similar approach is used for the gradient of the potential. Then, once the linearization is shown to hold in a suitable regime, we argue with a Lindeberg--Feller central limit theorem. 
The main difficulties come from the necessity to first understand finely fixed point equations with random operators. 
Also, the limit reflects the fact that the sequence of functions considered in the Lindeberg--Feller central limit theorem is obtained from a Neumann series expansion.

\subsection{Two-measure case}
\label{sec: TwoMeas}

In this section, we will need the following assumption. 
\begin{assum}
\label{assum: Dim}
The dimension $d$ is strictly larger than 4.
\end{assum}
This assumption is necessary for the existence of the limiting variance in Theorem~\ref{thm: twoMeasures}.

Before developing further, let us introduce
$
K_{\eps_n}^\mu[h],
$
 the operator defined as 
\[
h\mapsto \int \pi_{\eps_n}(\cdot,y) h(y) \diff \mu(y).
\]
Note that the notation has been adapted to reflect the measure against which one integrates, which is necessary as the situation is not symmetric anymore. 
Such operators (for $\mu$ and $\nu$) are called the Sinkhorn operators. 
Similarly to the one-measure case, the first step of the proof is a linearization by hand, which we provide below.

\begin{prop}
\label{prop: Inversion}
Under the same setting as Assumption~\ref{assum: Main}, upon choosing $n^{-2/(d+2)} \ll \eps_n $,  it holds
\begin{align*}
&
\begin{pmatrix}
\hat f_{\eps_n} -f_{\eps_n}\\
\hat g_{\eps_n} -g_{\eps_n}
\end{pmatrix}\\
&= - \frac{\eps_n}{\sqrt{n}}\\
&\quad \times\begin{pmatrix}
 ( \id -  K_{\eps_n}^{{\nu}} K_{\eps_n}^{{\mu}})^{-1} 
& -K_{\eps_n}^{{\nu}} ( \id -  K_{\eps_n}^{{\mu}} K_{\eps_n}^{{\nu}})^{-1} \\
-( \id -  K_{\eps_n}^{{\mu}} K_{\eps_n}^{{\nu}})^{-1} K_{\eps_n}^{{\mu}} 
&  ( \id -  K_{\eps_n}^{{\mu}} K_{\eps_n}^{{\nu}})^{-1} 
\end{pmatrix}\begin{bmatrix}
 \int \pi_{\eps_n}(\cdot,y) \diff \mathbb{G}_n^{{\nu}}(y) \\
 \int \pi_{\eps_n}(x,\cdot) \diff \mathbb{G}_n^{{\mu}}(x) 
\end{bmatrix} \\
&+\oh_p( n^{-1/2} \eps_n^{-d/4 +1}).
\end{align*}
\end{prop}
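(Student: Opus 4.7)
The plan is to view the system provided by Assumption~\ref{assum:potFluct2} as a block linear equation on (a suitable subspace of) $L^2(\mu)\oplus L^2(\nu)$ and to invert it through a Schur complement. Dividing the two relations of \ref{assum:potFluct2} by $a_n$, one obtains
\begin{align*}
(\hat f_{\eps_n}-f_{\eps_n}) + K_{\eps_n}^{\nu}\bigl[\hat g_{\eps_n}-g_{\eps_n}\bigr] &= -\frac{\eps_n}{\sqrt{n}}\int \pi_{\eps_n}(\cdot,y)\,\diff \mathbb{G}_n^{\nu}(y) + \oh_p(a_n^{-1}),\\
K_{\eps_n}^{\mu}\bigl[\hat f_{\eps_n}-f_{\eps_n}\bigr] + (\hat g_{\eps_n}-g_{\eps_n}) &= -\frac{\eps_n}{\sqrt{n}}\int \pi_{\eps_n}(x,\cdot)\,\diff \mathbb{G}_n^{\mu}(x) + \oh_p(a_n^{-1}).
\end{align*}
Stacking these identities, the left-hand side becomes the action of the block operator $M_{\eps_n} := \begin{pmatrix}\id & K_{\eps_n}^{\nu}\\ K_{\eps_n}^{\mu}& \id\end{pmatrix}$ on $(\hat f_{\eps_n}-f_{\eps_n},\hat g_{\eps_n}-g_{\eps_n})^{\top}$.

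I would then apply the classical Schur complement formula, taking $\id$ as the invertible bottom-right block, whose Schur complement is exactly $\id-K_{\eps_n}^{\nu}K_{\eps_n}^{\mu}$. This yields
\[
M_{\eps_n}^{-1}=\begin{pmatrix}
(\id-K_{\eps_n}^{\nu}K_{\eps_n}^{\mu})^{-1} & -(\id-K_{\eps_n}^{\nu}K_{\eps_n}^{\mu})^{-1}K_{\eps_n}^{\nu}\\
-K_{\eps_n}^{\mu}(\id-K_{\eps_n}^{\nu}K_{\eps_n}^{\mu})^{-1} & \id+K_{\eps_n}^{\mu}(\id-K_{\eps_n}^{\nu}K_{\eps_n}^{\mu})^{-1}K_{\eps_n}^{\nu}
\end{pmatrix}.
\]
To land on the symmetric form stated in the proposition, I would invoke the intertwining (push--pull) identities
\[
(\id-K_{\eps_n}^{\nu}K_{\eps_n}^{\mu})^{-1}K_{\eps_n}^{\nu}=K_{\eps_n}^{\nu}(\id-K_{\eps_n}^{\mu}K_{\eps_n}^{\nu})^{-1}
\]
and
\[
\id+K_{\eps_n}^{\mu}(\id-K_{\eps_n}^{\nu}K_{\eps_n}^{\mu})^{-1}K_{\eps_n}^{\nu}=(\id-K_{\eps_n}^{\mu}K_{\eps_n}^{\nu})^{-1},
\]
both of which are consequences of the elementary algebraic relation $(\id-AB)^{-1}A=A(\id-BA)^{-1}$ (valid whenever the relevant Neumann series converge) together with the identity $(\id-BA)^{-1}=\id+B(\id-AB)^{-1}A$. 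Multiplying $M_{\eps_n}^{-1}$ against the right-hand side of the displayed system then produces exactly the expansion announced in the statement, with an $\oh_p(a_n^{-1})$ remainder inherited from \ref{assum:potFluct2}.

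The main obstacle, and the step I expect to require the most care, is justifying the existence of the inverses $(\id-K_{\eps_n}^{\nu}K_{\eps_n}^{\mu})^{-1}$ and $(\id-K_{\eps_n}^{\mu}K_{\eps_n}^{\nu})^{-1}$. Each of $K_{\eps_n}^{\mu}$ and $K_{\eps_n}^{\nu}$ is a Markov operator (a conditional expectation under the entropic coupling $\pi_{\eps_n}$), which preserves constants; consequently $1$ is an eigenvalue of $K_{\eps_n}^{\nu}K_{\eps_n}^{\mu}$ on the one-dimensional subspace of constants. This is consistent with the fact that entropic potentials are identified only up to the additive shift $(f,g)\mapsto(f+c,g-c)$, so the system is intrinsically degenerate on that direction. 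I would resolve the ambiguity by imposing a centering convention (for instance $\int(\hat g_{\eps_n}-g_{\eps_n})\,\diff\nu=0$) and interpreting $M_{\eps_n}^{-1}$ as acting on the quotient by constants. On this subspace, at each fixed $\eps_n>0$, the Hilbert-projective contraction of Sinkhorn operators (equivalently, the convergence of Sinkhorn's algorithm) guarantees that the spectral radius of $K_{\eps_n}^{\nu}K_{\eps_n}^{\mu}$ is strictly below $1$, so that the Neumann series $\sum_{k\ge 0}(K_{\eps_n}^{\nu}K_{\eps_n}^{\mu})^k$ converges and the inverses are well defined. The genuinely delicate point, which the paper itself flags, is that the contraction coefficient deteriorates as $\eps_n\downarrow 0$, so a quantitative tracking of this spectral gap is needed to ensure that the remainder really is $\oh_p(a_n^{-1})$; this is precisely where the link with the linearized Monge--Amp\`ere operator described in Section~\ref{sec: twoMeas} enters and drives the analysis.
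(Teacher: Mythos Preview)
Your proposal is correct and follows essentially the same route as the paper: both stack the two linearized relations from \ref{assum:potFluct2} into the block system governed by $\begin{pmatrix}\id & K_{\eps_n}^{\nu}\\ K_{\eps_n}^{\mu}& \id\end{pmatrix}$, invert it via a Schur complement, and handle the degeneracy on constants by restricting to the complementary subspace. The only cosmetic difference is that the paper simplifies the $(1,1)$ block $\id + K_{\eps_n}^{\nu}(\id-K_{\eps_n}^{\mu}K_{\eps_n}^{\nu})^{-1}K_{\eps_n}^{\mu}$ to $(\id-K_{\eps_n}^{\nu}K_{\eps_n}^{\mu})^{-1}$ by appealing directly to the Neumann series, whereas you use the equivalent push--pull identity $(\id-AB)^{-1}A=A(\id-BA)^{-1}$; and the paper justifies invertibility at fixed $\eps_n>0$ by citing \cite{carlier2020differential,gonzalez2022weak,gonzalez2023weak} rather than the Hilbert-projective contraction argument you sketch.
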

The proof is presented in Section~\ref{sec: Inversion}. The fundamental work of \citet{gonzalez2022weak}  paved the way for such a result, with the important difference that the regularization parameter was then held fixed.  In the work just mentioned, the same type of operators were already appearing. We provide a clear understanding of what these operators look like for small regularization parameter. 
This is the content of the following theorem.


\begin{theo}
\label{thm: LimOp}
Under Assumption~\ref{assum: Main},  it holds that
\begin{align}
\label{eq: MeasLimOp}
 &   h\mapsto \nonumber K_{\eps_n}^{{\mu}}\Big[K_{\eps_n}^{{\nu}}[h]\Big](y_2)   \\\nonumber
   &  =   \frac{1 + \oh(1)}{\nu^{1/2}(y_2)} \int \frac{1 }{(2\pi\eps_n)^{d/2}} 
 \ h(y) \sqrt{ \frac{
    \det[ \nabla^2 \phi_0^{*}(y_2)]\det[  \nabla^2 \phi_0^{*}(y)]
}{
    \det [ \nabla^2 \phi_0^{*}(y_2)+\nabla^2 \phi_0^{*}(y)]
}
    } \\
&  \times \exp \left(-  \frac1{4\eps_n} 
(y - y_2)^\top \big[\nabla^2 \phi_0^{*}(y_2) \big](y - y_2) + \oh\left(\frac{\|y-y_2\|^2}{\eps_n}\right)  \right) 
 \nu^{1/2}(y) \diff y,
\end{align}
as $\eps_n \to 0$.
\end{theo}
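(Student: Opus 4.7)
The plan is to leverage Assumption~\ref{asssum: PotExp} to turn the composition of Sinkhorn operators into a Laplace-type integral and then to evaluate the resulting $x$-integral using the second-order behavior of $D(x,y) \defeq \psi_0(x)+\phi_0(y)-\langle x,y\rangle$ around its zero set (the graph of $T_0$).

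First, using the identity $f_0(x)+g_0(y)-\tfrac12\|x-y\|^2 = -D(x,y)$ together with Assumption~\ref{asssum: PotExp}, the entropic coupling density satisfies, uniformly on compact subsets of the supports,
\[\pi_{\eps_n}(x,y) = \frac{e^{-D(x,y)/\eps_n}}{(2\pi\eps_n)^{d/2}\sqrt{\mu(x)\nu(y)}}\bigl(1+\oh(1)\bigr).\]
Inserting this into $K_{\eps_n}^{\mu}[K_{\eps_n}^{\nu}[h]](y_2) = \iint h(y)\pi_{\eps_n}(x,y)\pi_{\eps_n}(x,y_2)\diff\mu(x)\diff\nu(y)$ and absorbing the Jacobian factors from $\diff\mu(x)\diff\nu(y) = \mu(x)\nu(y)\diff x\diff y$ yields
\[K_{\eps_n}^{\mu}\!\bigl[K_{\eps_n}^{\nu}[h]\bigr](y_2) = \frac{1+\oh(1)}{(2\pi\eps_n)^d\sqrt{\nu(y_2)}}\int h(y)\sqrt{\nu(y)}\,J_{\eps_n}(y,y_2)\,\diff y,\]
where $J_{\eps_n}(y,y_2) \defeq \int e^{-(D(x,y)+D(x,y_2))/\eps_n}\diff x$.

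Second, I would evaluate $J_{\eps_n}$ by Laplace's method. From Assumption~\ref{ass:regmccann} applied in the $x$-variable, $D(x,y)$ has its unique zero at $x = \nabla\phi_0(y)$ and the quadratic expansion
\[D(x,y) = \tfrac12(x-\nabla\phi_0(y))^\top H(y)(x-\nabla\phi_0(y)) + \Oh(\|x-\nabla\phi_0(y)\|^3),\]
where $H(y) \defeq \nabla^2_x D(x,y)\big|_{x=\nabla\phi_0(y)} = \nabla^2\psi_0(\nabla\phi_0(y)) = [\nabla^2\phi_0(y)]^{-1}$ by the inverse-function theorem for gradients of Fenchel conjugates. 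Writing $a \defeq \nabla\phi_0(y)$, $b \defeq \nabla\phi_0(y_2)$, $A \defeq H(y)$ and $B \defeq H(y_2)$, the sum $D(x,y)+D(x,y_2)$ is approximately quadratic in $x$ with Hessian $A+B$ and minimum value $\tfrac12(a-b)^\top(A^{-1}+B^{-1})^{-1}(a-b)$, so that
\[J_{\eps_n}(y,y_2) \;\asymp\; \frac{(2\pi\eps_n)^{d/2}}{\sqrt{\det(A+B)}}\,\exp\!\Bigl(-\tfrac{1}{2\eps_n}(a-b)^\top(A^{-1}+B^{-1})^{-1}(a-b)\Bigr).\]
The matrix identity $\det(A+B) = \det(A^{-1}+B^{-1})\det A\det B$ combined with $A^{-1} = \nabla^2\phi_0(y)$ and $B^{-1} = \nabla^2\phi_0(y_2)$ rewrites the prefactor as $\sqrt{\det\nabla^2\phi_0(y)\det\nabla^2\phi_0(y_2)/\det(\nabla^2\phi_0(y)+\nabla^2\phi_0(y_2))}$, which is exactly the claimed ratio after identifying $\nabla^2\phi_0^*$ with the Hessian of $\phi_0$ on the $y$-side. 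Finally, Taylor-expanding $a-b = \nabla^2\phi_0(y_2)(y-y_2)+\Oh(\|y-y_2\|^2)$ and using $(A^{-1}+B^{-1})^{-1}\to\tfrac12[\nabla^2\phi_0(y_2)]^{-1}$ as $y\to y_2$ produces the Gaussian exponent $\tfrac{1}{4\eps_n}(y-y_2)^\top\nabla^2\phi_0(y_2)(y-y_2)$, with the $\oh(\|y-y_2\|^2/\eps_n)$ correction absorbing the Lipschitz variation of the Hessian and the cubic remainder in the $x$-expansion.

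The main obstacle will be making these Laplace heuristics \emph{uniform} in $y$ and controlling the tails of both integrals. For this, one combines the global lower bound $D(x,y) \gtrsim \|x-\nabla\phi_0(y)\|^2$ coming from the strong convexity of $\psi_0$ in Assumption~\ref{ass:backwardsmoothness} (to exponentially suppress the $x$-tails outside a window of scale $\sqrt{\eps_n\log(1/\eps_n)}$ around the critical point) with the Lipschitzness of $\nabla^2\phi_0^*$ from the same assumption (to freeze the Hessians near $y_2$ with $\oh(1)$ error on this window). An analogous splitting at scale $\|y-y_2\|\lesssim\sqrt{\eps_n\log(1/\eps_n)}$ for the outer integral, together with careful bookkeeping of the cubic remainders afforded by Assumption~\ref{ass:regmccann}, then yields the stated asymptotic.
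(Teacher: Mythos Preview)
Your proposal is correct and follows essentially the same route as the paper: both replace $\pi_{\eps_n}$ by its asymptotic form via Assumption~\ref{asssum: PotExp}, write the composition as a double integral, carry out the inner $x$-integral as a Gaussian integral (the paper phrases this as ``convolution of two Gaussians'', you phrase it as Laplace's method on $D(x,y)+D(x,y_2)$), and then rewrite the resulting quadratic form in $\nabla\phi_0(y)-\nabla\phi_0(y_2)$ as one in $y-y_2$ (the paper invokes ``Bregman duality'', you do the equivalent Taylor expansion directly). Your closing paragraph on uniformity and tail control via the strong convexity lower bound on $D$ is in fact more explicit than the paper's own proof, which simply states that the negligible terms are ``licit because of the assumptions made''.
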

The proof is in Appendix~\ref{sec: LimOp}.  This result is important. It shows how the combination of the two Sinkhorn operators appearing in Proposition~\ref{prop: Inversion} behave in the small epsilon limit. For a first intuition, recall that the identity minus the gaussian convolution properly rescaled converges to the Laplace operator\footnote{The reader can convince themself by a simple Taylor expansion.} (under suitable assumptions) as the variance decreases, i.e.,
\[
\frac{\id - k_\eps \star }{\eps} \to \Delta, \quad \text{ as } \eps \to 0. 
\]
Even though the result above is stated as a mere intuition the combination of Proposition~\ref{prop: Inversion} and Theorem~\ref{thm: LimOp} suggests a clear picture: 
\begin{center}
\emph{The fluctuations of the potentials are driven by a (sequence of) kernels solving (in $L^2$) a second-order elliptic differential equation where the data term is 
$e^{-D(x,y)/\eps }/ \big((2\pi\eps)^{d/2} \sqrt{\mu(x) \nu(y)} \big)$
at first order. 
This echoes with the fact that the linearization of the Monge--Ampère equation also is a second-order elliptic differential equation.}
\end{center}

\begin{prop}[The Sinkhorn fluctuation kernel]
\label{prop: SinkFluctKer}
 As $\eps_n \to 0$, the following asymptotic equivalence holds. 
\[
\left[( \id -  K_{\eps_n}^{{\nu}} K_{\eps_n}^{{\mu}})^{-1}  [\pi_{\eps_n}(x,y)]\right] \asymp \frac{\mathcal{F}^{-1}\left[ \tfrac{
\mathcal{F}\left[\exp\left(-\frac1{\eps_n} D(x,\cdot) \right)\right](\xi)
}{4 \pi^2 \xi^\top [\nabla^2 \phi(x) ]\xi }
\right](y)}{(2\pi\eps_n)^{d/2} \eps_n \nu^{1/2}(y)\mu^{1/2}(x)  }.
\]
\end{prop}

The proof is in Appendix~\ref{sec: SinkFluctKer}.

\begin{rmk}
The recent work of \cite{manole2023central} exploits a linearization of Monge--Ampère to provide central limit theorems for another estimator of the transport maps. We sketch their approach in Appendix~\ref{sec: Manole} for the readers to see the beautiful parallels. 
\end{rmk}


With this important fact understood, we can now turn to the central limit theorems in the two-measure setting. For simplicity, we only consider the two-measure transport problem where only $\mu$ is estimated by $\mu_n$ and $\nu$ is given.

\begin{theo}
\label{thm: twoMeasures}
Under Assumptions~\ref{assum: Main} and \ref{assum: Dim}, for fixed $x \in \operatorname{int}(K) $ and $n^{-2/d} \ll \eps_n $, 
    \[
    \sqrt{n}\eps_n^{d/4} \left(\hat f_{\eps_n}(x) -f_{\eps_n}(x)\right) \rightsquigarrow \mathcal{N}\big(0, \Sigma(x)\big), \quad \text{as }\eps_n\to 0, n \to \infty,
    \]
    with 
    \[
    \Sigma(x) = \lim_{n \to \infty}\frac1{\mu(x)} \int_{\reals^d} \left( \tfrac{ \mathcal{F}\left[ e^{-\frac1{\eps_n} D(x,\cdot)   }\right](\xi)}
{  (
2\pi\eps_n)^{d/4}\xi^\top \nabla^2 \phi(x) \xi }\right)^2 \diff \xi,
    \]
    where the last limit exists as $D(x, \cdot)$ has a quadratic behavior.
    
    Further, choosing $n^{-2/(d+2)}  \ll \eps_n \ll n^{-2/(d+4)}$, it holds that
    \[
    \sqrt{n}\eps_n^{d/4-1/2} \left(\nabla\hat f_{\eps_n}(x) - T_0(x)\right) \rightsquigarrow \mathcal{N}\big(0, \Sigma'(x)\big), \quad \text{as }\eps_n\to 0, n \to \infty,
    \] 
    with
    \[
    \Sigma'(x) = \lim_{n \to \infty}\frac1{\mu(x)} \int_{\reals^d}  \nabla^2 \phi(x) \xi \xi^\top \nabla^2 \phi(x)^\top \left( \tfrac{
 \mathcal{F}\left[ e^{-\frac1{\eps_n} D(x,\cdot)   }\right](\xi)}
{ \eps_n^{1/2} (2\pi\eps_n)^{d/4}  \xi^\top \nabla^2 \phi(x) \xi }\right)^2 \diff \xi,
    \]
\end{theo}

\begin{rmk}[Fully empirical version] If both marginals are estimated, the right-hand side of Proposition~\ref{prop: Inversion} becomes a sum of two independent empirical processes. Under the bandwidth assumptions of Theorem~\ref{thm: twoMeasures}, each contributes a diagonal term to the limiting covariance at distinct points, and the limit is the sum of two variance terms of the form given in Theorem~\ref{thm: twoMeasures}. 
\end{rmk}

\section{Application to score functions estimation}
\label{sec: AppliScore}

In this subsection, we exhibit rates of convergence in the $L^2(\mu)$ metric, which is the one of interest when it comes to studying the convergence diffusion schemes.
To continue, we require the following additional assumptions. By analogy with the classical notation used for log-concave measures, let us introduce the notation $V(x) := -\log \mu(x), x \in \operatorname{int}(K)$. 

\begin{assum}
   \label{ass: Reg'} 
   There exists a function $R(x) \in \cC^1(\operatorname{int}(K))$ such that,
    \begin{align*}
   & \Big\vert -2f_{\eps_n}(x)/{\eps_n}
     - \log\mu(x) - \frac{d}{2}\log(2\pi{\eps_n}) \\
  & \qquad  - {\eps_n} \operatorname{tr} \left( \frac{\nabla^2\mu(x)}{4\mu(x)} - \frac{1}8\big(\nabla \log \mu(x) \big)\big(\nabla \log \mu(x) \big)^\top \right)\Big\vert = {\eps_n}^2 R(x). 
    \end{align*}
\end{assum}

\begin{assum}
\label{ass: C}The Hessian of $V$ is Lipschitz, i.e.,  there exists $M > 0$ such that 
    $$\|\nabla^2 V(x) - \nabla^2 V(y)\| \leq M\|x-y\|.$$
     Furthermore, $\nabla^2  V \preceq \beta I_d$.
\end{assum}

\begin{theo}[$L^2(\mu)$ convergence rates]
\label{thm:main_stat_result} 
Under  Assumptions~\ref{assum: Main}, \ref{ass: Reg'} and  \ref{ass: C}. Further assume that the support is bounded. Setting the regularization parameter $\eps_n \asymp n^{-1/(d+4)}$, the estimator \eqref{eq: estimator_def} achieves the following rate of estimation
\begin{align}
    \E\|\hat{s}_{\eps_n} - \nabla \log \mu\|^2_{L^2(\mu)}\lesssim n^{-2/(d+4)}\,.
\end{align}
\end{theo}

\begin{rmk}
Our proofs, and thus our results, are asymptotic. Yet, 
they can be converted into finite sample bounds at the price of multiplying by sufficiently large (but universal) constants and taking $n$ larger than some $n_0 \in \NN $. 
\end{rmk}

The proof of Theorem~\ref{thm:main_stat_result} follows a traditional bias-variance trade-off arising from the inequality
\begin{align}
        \E\|\hat{s}_\eps - \nabla \log \mu\|^2_{L^2(\mu)}\leq 2\E\|\hat{s}_\eps - s_\eps\|^2_{L^2(\mu)} + 2 \|s_\eps - \nabla \log \mu\|^2_{L^2(\mu)}\,,
\end{align}
where $s_\eps$ is the population counterpart to our estimator.
The proof follows directly from the following two propositions. Proposition~\ref{prop: PopBias} controls the bias in $L^2$. We then control the stochastic fluctuations in Proposition~\ref{prop: StatFluct}, from which a trade-off in $\eps_n$ yields the final result; we omit this final step.

\begin{prop}[Population bias]
\label{prop: PopBias}
 Under Assumption~\ref{ass: Reg'} and \ref{ass: C},
    it holds that 
    \[
    \|s_\eps - \nabla \log \mu\|^2_{L^2(\mu)} \lesssim \eps^2 \big(M^2 d + \beta^2 I_0(\mu)\big),
    \]
where $I_0(\mu)$ is the Fisher information of $\mu$, and we otherwise omit universal constants.
\end{prop}

\begin{prop}[Statistical fluctuations]
\label{prop: StatFluct}
    Consider a sequence  $(\eps_n)_{n\geq1}$ with $\eps_n \to 0$ as $n\to\infty$.  Under Assumption~\ref{assum: Main},
    it holds that
    \[
    \E\|\hat{s}_{\eps_n} - s_{\eps_n}\|^2_{L^2(\mu)} \lesssim n^{-1} \eps_n^{-d-2}.
    \]  
\end{prop}

The two propositions above are proved in Appendix~\ref{sec: ProofScoreL2}. They build upon the linearization we had obtained.
Sections~\ref{sec: ConfirPot} and~\ref{sec: IntrDim}  illustrate Theorems~\ref{thm: multivariate_limit}  and~\ref{thm:main_stat_result}  numerically; Section~\ref{sec: IntrDim} further discussed the question of intrinsic-dimension adaptation raised at the start of this section.

\section{Numerics}
\label{sec:Numerics}

\subsection{On the computational aspects}

We now show that the computation of the self-transport potentials is easy as it is based on the fixed-point relation~\eqref{eq:fixPtEq}. Iterating this process on the basis of samples usually converges in a handful of iterations (typically in two to three iterations, as also observed by \citet{feydy2019interpolating}). We provide pseudo-code in Algorithm~\ref{alg:fixedpoint_alg} (``LSE'' refers to the ubiquitous log-sum-exp operator, which leads to well-conditioned updates).  

\begin{algorithm}[h!]
\caption{\label{alg:fixedpoint_alg} Computation of the self-OT potential}
\begin{algorithmic}
\State \textbf{Input:} $K_{\max} \geq 0$, data $(x_i)_{i=1}^n \sim \mu$, regularization parameter $\eps$
\State \textbf{Initialize:} $f^{(0)} \gets 0_n \in \R^n$ and $k \gets 1$ \Comment{$f_i^{(k)}\defeq f^{(k)}(x_i)$}
\While{$k \le K_{\max}$}
\State $f_i^{(k)} \gets \frac12 f_i^{(k-1)} - \frac\eps2 \LSE_{j=1}^n \Big[\log(\tfrac1n) + \tfrac1\eps f_j^{(k-1)} - \tfrac1{2\eps} \|x_i-x_j \|^2 \Big]$
\State $k \gets k+1$
\EndWhile
\State \textbf{Return:} $f^{(K_{\max})}$
\end{algorithmic}
\end{algorithm}

The two-sample problem has also been studied extensively. The classical Sinkhorn algorithm starts with a pair of potential candidates ($f_{\text{init}},g_{\text{init}})$ and updates recursively each potential for it to satisfy one of the conditions of equation \eqref{eq:dual_opt}.

Just like the algorithm in the pseudo-code above making ``half-Sinkhorn steps", a similar approach can be applied to the classical Sinkhorn algorithm.  This approach with ``partial updates" is the so-called overrelaxed version and is described in \cite{thibault2021overrelaxed}. The first steps are classical Sinkhorn steps and then, based on estimates, the overrelaxation parameter is used alike the 1/2 in the self-transport case. Importantly, note that the regularization parameter conditions that we obtained show that it should decrease very slowly with the sample size, implying that off-the-shelf algorithms might be safely used in most cases.

\subsection{Impact of tilting}
\label{sec: IntrDim}

We have showed that the gradient of the self-transport potentials can serve as estimators of the score function. The rate for estimating score functions is not matching the minimax one, as the current proof technique requires more regularity. Still, does it mean that the estimator needs to be forgotten? 
Understanding the impact of the tilting seen in equation \eqref{eq: estimator_def} is a natural objective.

In this experiment, we consider a Gaussian random variable that is supported on a 3-dimensional subspace in the case where the observed data is of dimension $d$, for $d\in\{5,10,50\}$. The $L^2$ error is estimated based on Monte-Carlo integration with 30000 points and the results are averaged over 5 replicates for more robustness of the results. The results are presented in Figure~\ref{1D: GaussSubsp}. We choose to present the result for two bandwidth size, the ambient one and the oracle one knowing the intrinsic dimension. The first choice corresponds to a case where the practitioner does not realize that the data is lower dimensional.  This choice further aims at verifying whether a sort of ``low-complexity adaptation principle" \citep{groppe2023lower,rigollet2025sample} could hold for the potentials.

From that graph, it is apparent that estimation via self-transport, using the bandwidth size that is optimal for the ambient dimension is performing better than the kernel density estimator with the same bandwidth choice in high dimensions.  In practice, for machine learning applications where the intrinsic dimension is unknown, this feature of self-transport might be an advantage of kernel density estimators. 

If one uses the unknown intrinsic dimension,  the slopes are similar and the constant chosen in front of the term $n^{-1/7}$ plays an important role. This choice of factor explains the underperformance for $d=5$ in the leftmost plot. 
The plot on the right hand side suggests that the estimator based on optimal transport is more variable. 

\begin{figure}[h!]
\centering
\includegraphics[width=0.47\textwidth]{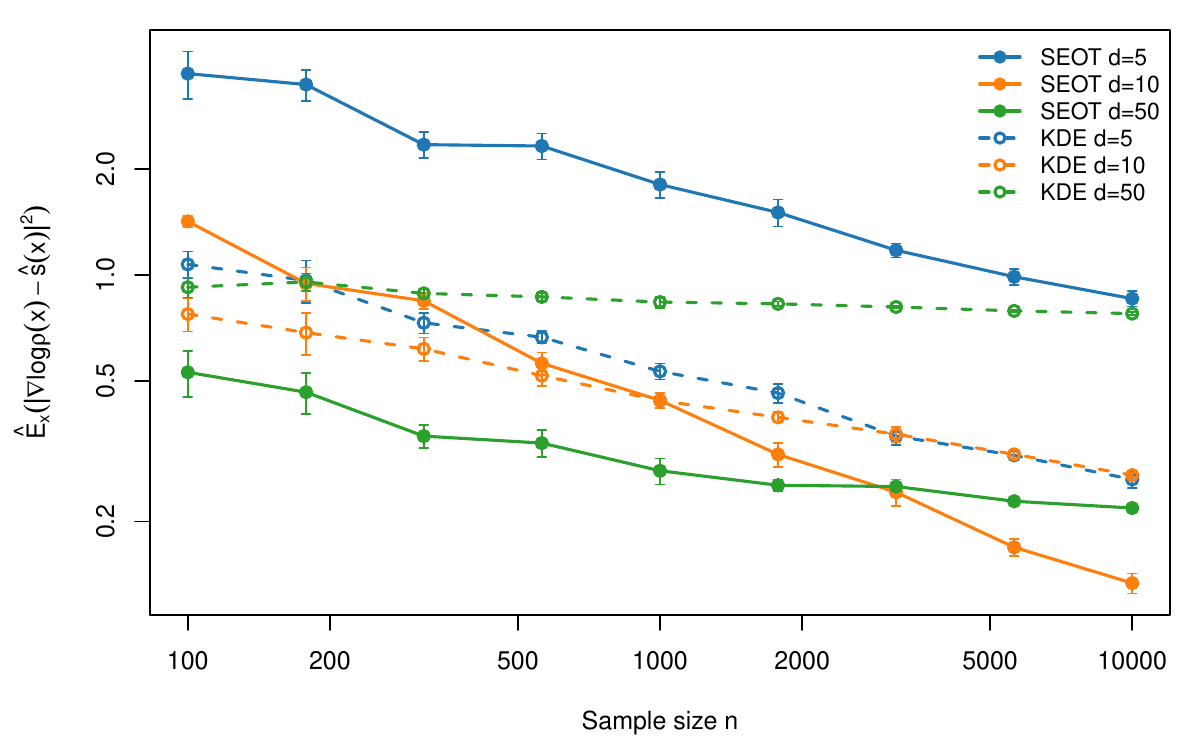}
\includegraphics[width=0.47\textwidth]{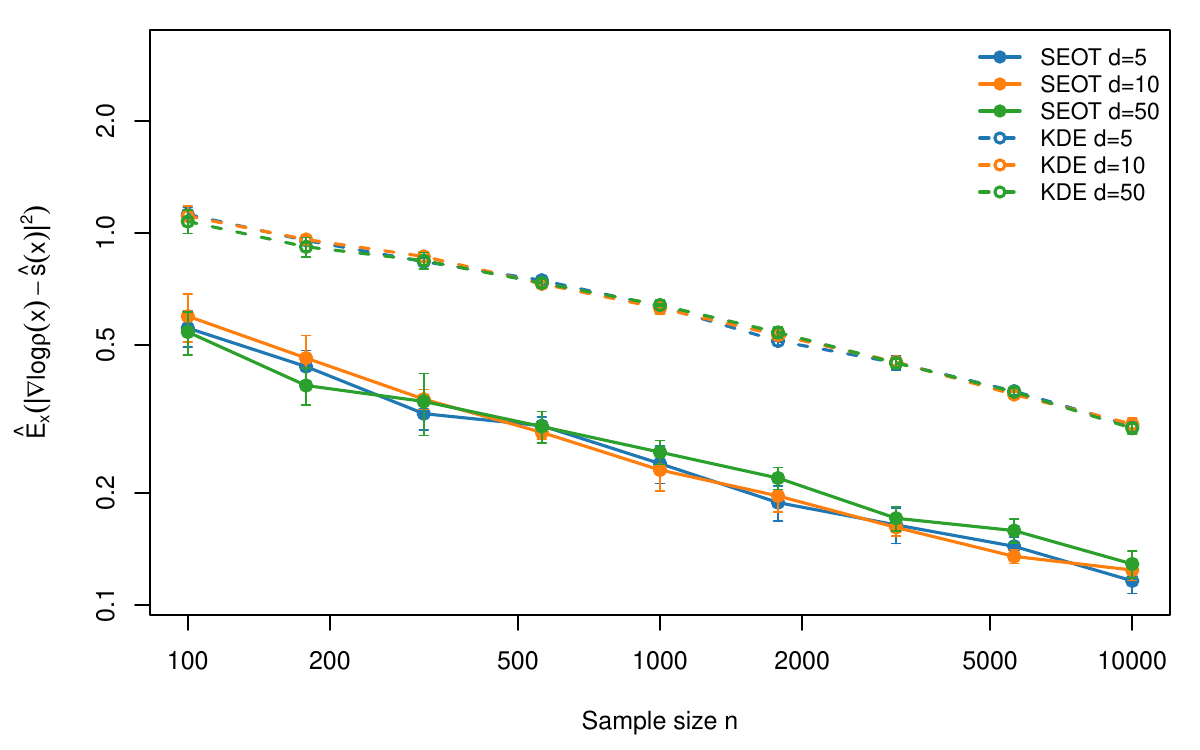}
\caption{\label{1D: GaussSubsp}  Estimation error (MC-approximated) of the self-ot scores and KDE in a Gaussian model for various dimensions $d\in\{5,10,50\}$. The covariance matrix is diagonal on the subspace on which it concentrates with diagonal (0.5, 1.5, 2.5).   
Left)  The na{\"i}ve equivalent bandwidth choice is $n^{-\frac{1}{d+4}}$. Right) The chosen bandwidth is  $2 n^{-\frac{1}{3+4}}$, incorporating the knowledge of the intrinsic dimension.
}
\end{figure}

In the particular case of the simulation, the formula of the fixed point equation might help understand the phenomenon at play. The measure $\mu$ is actually a push-forward $i_{\#}\gamma$, where $i$ adds coordinates and rotates the distribution. Calling $\reals^p$ the ambient space and $\reals^d$ the actual one,  we thus have
\begin{align*}
u_\eps(x) &= \int 
    \frac1{u_\eps(y)}\exp\left(-\tfrac1{2\eps} \|x-y\|^2\right)
    \diff i_{\#}\gamma(y) \\
    &= \int 
    \frac1{u_\eps( i(y))}\exp\left(-\tfrac1{2\eps} \|x- i(y)\|^2\right)
    \diff\gamma(y).
 \end{align*}
From which it is clear that $u_\eps(x)$ can capture the correct subset to reduce the problem to a smaller dimensional one, as $\|i(z)- i(y)\|_{\reals^p}^2= \|z- y\|_{\reals^d}^2$. 
In the case of data supported on a manifold, one would recover a similar behavior (at the first order) by working in local coordinates and observing that the kernel morally selects a small neighborhood.

\subsection{Confirmation of the CLT for the potentials}
\label{sec: ConfirPot}

In this final subsection, we numerically verify the limiting variance of 
Theorem~\ref{thm: multivariate_limit} for the self-transport potentials. We take 
$\mu = \mathcal{N}(0, I_d)$ and $x_0 = 0$, where the population potential 
$f_\eps$ is available in closed form (Appendix~\ref{sec: GausEOT}), 
so the rescaled statistic 
$T_b := \sqrt{n}\,\eps_n^{d/4-1}(\hat f_{\eps_n}(0) - f_{\eps_n}(0))$ 
can be centered exactly. The limiting variance reduces to $C_3(d)/4$.

\begin{figure}[h!]
  \centering
  \includegraphics[width=\textwidth]{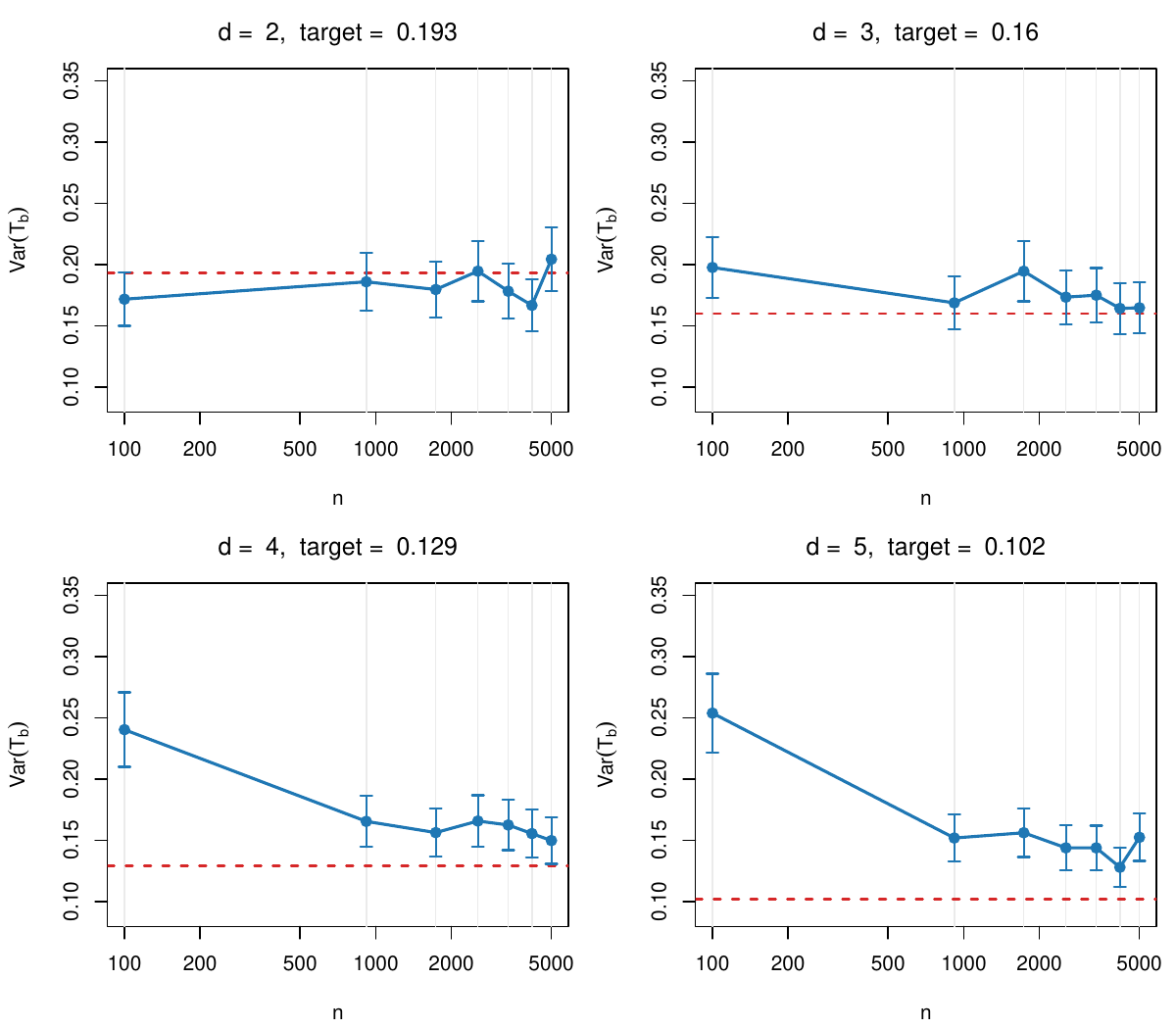}
  \caption{Empirical variance of $\sqrt{n}\,\eps_n^{d/4-1}(\hat f_{\eps_n}(0) - f_{\eps_n}(0))$ 
  vs.\ the theoretical limit $C_3(d)/4$ from Theorem~11, for $d \in \{2,3,4,5\}$ 
  and $\eps_n = n^{-2/(d+4)}$. Error bars show $\pm 2$ standard errors 
  ($B = 500$ replications). The dashed line is the asymptotic target.}
  \label{fig:clt}
\end{figure}

Figure~\ref{fig:clt} shows the empirical variance of $T_b$ for 
$d \in \{2,3,4,5\}$, $n$ up to $5000$, and $B = 500$ replications, choosing 
the parameter $\eps_n = n^{-2/(d+4)}$. For $d = 2$ and $d = 3$, 
the empirical variance matches $C_3(d)/4$ to within sampling uncertainty 
from $n = 1000$ onward. For $d = 4$ and $d = 5$, the ratio of the variance to $C_3(d)/4$ 
decreases monotonically toward~$1$ as $n$ grows, with the residual gap 
consistent with the $O(\eps_n\, d)$ correction from 
Proposition~\ref{prop:secOrdExp}. Reaching $\eps_n d \ll 1$ at 
$d = 5$ would require $n \approx 3 \times 10^5$.


\section{Conclusion}
\label{sec: Conc}

In this paper, we have developed the first pointwise central limit theorems for transport maps estimated via the barycentric projection of entropy-regularized optimal transport. We have showcased the fact that entropy regularized self-transport behaves like a tilted and boundary-corrected kernel density estimator, culminating in the remarkable fact that Sinkhorn behaves in the small epsilon-limit just like the natural convolve-and-then-transport approach. 

The two approaches have pros and cons, which are important to put into perspective.
The Sinkhorn-based estimators are computationally easy but the implicit kernel is imposed to the user by the structure of the problem. The convolve-and-then-transport approach would enable to adapt into the regularity properties of the map if we knew them but is currently computationally out-of-reach. 
This raises the first open problem. 

\begin{opProb}
Can we come up with a regularization that still enables fast computations but allowing for the implicit kernel to adapt to smoothness? 
\end{opProb}

Ideas in that direction might involve quadratically regularized optimal transport, a variant that received a lot of attention recently, see \citet{zhang2023manifold,garriz2024infinitesimal,gonzalez2025sparse,gonzalez2025sample}; 
\citet{liu2025beyond} also recently proposed to explore new regularization schemes. 

Our work invites further developments. In particular, we conjecture that the tilt hinted at in Section~\ref{se: scoresDetour} might be useful for adaptation to intrinsic dimension, as hinted at by the simulations in Section~\ref{sec: IntrDim}. This raises the  following question.

\begin{opProb}
Building on the elements of the previous section, can one prove that the tilting induced by entropy-regularized optimal transport provides some adaptivity to the intrinsic dimension? Can one identify in what cases it brings an improvement? 
\end{opProb}

The proof certainly needs a nontrivial generalization of our results. It is expected that taking epsilon too large (even if decreasing) translates in an important bias, so that one expects that second problem to be mostly an analysis question.

Finally, even though our results are framed under natural assumptions arising from the optimal transport theory, they remain quite restrictive. This raises the next question.

\begin{opProb}
What happens to the estimators when the regularity conditions of the paper aren't met? Can we generalize to sub-Gaussian distributions? 
\end{opProb}

Beyond these specific problems, we hope the techniques of this paper, in particular the linearization of the empirical Schrödinger system at $\eps_n \to 0$ and the resolvent analysis of the Sinkhorn composition, will find application in other settings where entropic regularization is used as a statistical or computational tool.


\subsection*{Acknowledgements}
I am thankful to Ariane Fazeny, Alberto Gonz\'alez-Sanz, Shayan Hundrieser, Jonathan Niles-Weed, Aram-Alexandre Pooladian, Tudor Manole and Thomas Staudt for helpful discussions and encouragements. 
The support of the Deutsche Forschungsgemeinschaft through SFB1456 is further acknowledged.

 \bibliography{ref}

@inproceedings{kim2019uniform,
  title={Uniform convergence rate of the kernel density estimator adaptive to intrinsic volume dimension},
  author={Kim, Jisu and Shin, Jaehyeok and Rinaldo, Alessandro and Wasserman, Larry},
  booktitle={International Conference on Machine Learning},
  pages={3398--3407},
  year={2019},
  organization={PMLR}
}

@inproceedings{jiang2017uniform,
  title={Uniform convergence rates for kernel density estimation},
  author={Jiang, Heinrich},
  booktitle={International Conference on Machine Learning},
  pages={1694--1703},
  year={2017},
  organization={PMLR}
}

@article{koehler2022statistical,
  title={Statistical efficiency of score matching: The view from isoperimetry},
  author={Koehler, Frederic and Heckett, Alexander and Risteski, Andrej},
  journal={arXiv preprint arXiv:2210.00726},
  year={2022}
}

@article{rigollet2025sample,
  title={On the sample complexity of entropic optimal transport},
  author={Rigollet, Philippe and Stromme, Austin J},
  journal={The Annals of Statistics},
  volume={53},
  number={1},
  pages={61--90},
  year={2025},
  publisher={Institute of Mathematical Statistics}
}

@article{manole2024sharp,
  title={Sharp convergence rates for empirical optimal transport with smooth costs},
  author={Manole, Tudor and Niles-Weed, Jonathan},
  journal={The Annals of Applied Probability},
  volume={34},
  number={1B},
  pages={1108--1135},
  year={2024},
  publisher={Institute of Mathematical Statistics}
}

@article{agarwal2026langevin,
  title={Langevin diffusion approximation to same marginal Schr{\"o}dinger bridge},
  author={Agarwal, Medha and Harchaoui, Zaid and Mulcahy, Garrett and Pal, Soumik},
  journal={Journal of Functional Analysis},
  pages={111494},
  year={2026},
  publisher={Elsevier}
}

@article{garriz2024infinitesimal,
  title={Infinitesimal behavior of quadratically regularized optimal transport and its relation with the porous medium equation},
  author={Garriz-Molina, Alejandro and Gonz{\'a}lez-Sanz, Alberto and Mordant, Gilles},
  journal={arXiv preprint arXiv:2407.21528},
  year={2024}
}

@article{gonzalez2025sparse,
  title={Sparse regularized optimal transport without curse of dimensionality},
  author={Gonz{\'a}lez-Sanz, Alberto and Eckstein, Stephan and Nutz, Marcel},
  journal={arXiv preprint arXiv:2505.04721},
  year={2025}
}

@article{gonzalez2025sample,
  title={Sample complexity of quadratically regularized optimal transport},
  author={Gonz{\'a}lez-Sanz, Alberto and del Barrio, Eustasio and Nutz, Marcel},
  journal={arXiv preprint arXiv:2511.09807},
  year={2025}
}

@article{thibault2021overrelaxed,
  title={Overrelaxed Sinkhorn--Knopp algorithm for regularized optimal transport},
  author={Thibault, Alexis and Chizat, L{\'e}na{\"\i}c and Dossal, Charles and Papadakis, Nicolas},
  journal={Algorithms},
  volume={14},
  number={5},
  pages={143},
  year={2021},
  publisher={MDPI}
}

@article{liu2025beyond,
  title={Beyond entropic regularization: Debiased Gaussian estimators for discrete optimal transport and general linear programs},
  author={Liu, Shuyu and Bunea, Florentina and Niles-Weed, Jonathan},
  journal={arXiv preprint arXiv:2505.04312},
  year={2025}
}

@inproceedings{sander2022sinkformers,
  title={Sinkformers: Transformers with doubly stochastic attention},
  author={Sander, Michael E and Ablin, Pierre and Blondel, Mathieu and Peyr{\'e}, Gabriel},
  booktitle={International Conference on Artificial Intelligence and Statistics},
  pages={3515--3530},
  year={2022},
  organization={PMLR}
}

@article{gottwald2025stable,
  title={Stable generative modelling using Schr{\"o}dinger bridges},
  author={Gottwald, Georg A and Li, Fengyi and Marzouk, Youssef and Reich, Sebastian},
  journal={Philosophical Transactions of the Royal Society A: Mathematical, Physical and Engineering Sciences},
  volume={383},
  number={2299},
  year={2025},
  publisher={The Royal Society}
}

@article{WKB,
author = {Jeffreys, Harold},
title = {On Certain Approximate Solutions of Lineae Differential Equations of the Second Order},
journal = {Proceedings of the London Mathematical Society},
volume = {s2-23},
number = {1},
pages = {428-436},
doi = {https://doi.org/10.1112/plms/s2-23.1.428},
url = {https://londmathsoc.onlinelibrary.wiley.com/doi/abs/10.1112/plms/s2-23.1.428},
eprint = {https://londmathsoc.onlinelibrary.wiley.com/doi/pdf/10.1112/plms/s2-23.1.428},
year = {1925}
}

@article{leger2023geometric,
  title={A geometric Laplace method},
  author={L{\'e}ger, Flavien and Vialard, Fran{\c{c}}ois-Xavier},
  journal={Pure and Applied Analysis},
  volume={5},
  number={4},
  pages={1041--1080},
  year={2023},
  publisher={Mathematical Sciences Publishers}
}

@article{goldfeld2024limit,
  title={Limit theorems for entropic optimal transport maps and Sinkhorn divergence},
  author={Goldfeld, Ziv and Kato, Kengo and Rioux, Gabriel and Sadhu, Ritwik},
  journal={Electronic Journal of Statistics},
  volume={18},
  number={1},
  pages={980--1041},
  year={2024},
  publisher={The Institute of Mathematical Statistics and the Bernoulli Society}
}

@article{dou2024optimal,
  title={From optimal score matching to optimal sampling},
  author={Dou, Zehao and Kotekal, Subhodh and Xu, Zhehao and Zhou, Harrison H},
  journal={arXiv preprint arXiv:2409.07032},
  year={2024}
}

@article{nutz2022entropic,
  title={Entropic optimal transport: Convergence of potentials},
  author={Nutz, Marcel and Wiesel, Johannes},
  journal={Probability Theory and Related Fields},
  volume={184},
  number={1},
  pages={401--424},
  year={2022},
  publisher={Springer}
}

@article{bernton2022entropic,
  title={Entropic optimal transport: Geometry and large deviations},
  author={Bernton, Espen and Ghosal, Promit and Nutz, Marcel},
  journal={Duke Mathematical Journal},
  volume={171},
  number={16},
  pages={3363--3400},
  year={2022},
  publisher={Duke University Press}
}

@article{zhang2024minimax,
  title={Minimax optimality of score-based diffusion models: Beyond the density lower bound assumptions},
  author={Zhang, Kaihong and Yin, Caitlyn H and Liang, Feng and Liu, Jingbo},
  journal={arXiv preprint arXiv:2402.15602},
  year={2024}
}

@article{deb2023wasserstein,
  title={Wasserstein mirror gradient flow as the limit of the Sinkhorn algorithm},
  author={Deb, Nabarun and Kim, Young-Heon and Pal, Soumik and Schiebinger, Geoffrey},
  journal={arXiv preprint arXiv:2307.16421},
  year={2023}
}

@article{10.1214/20-AOS1997,
author = {Jan-Christian H{\"u}tter and Philippe Rigollet},
title = {{Minimax estimation of smooth optimal transport maps}},
volume = {49},
journal = {The Annals of Statistics},
number = {2},
publisher = {Institute of Mathematical Statistics},
pages = {1166 -- 1194},
keywords = {Minimax rates, nonparametric estimation, Optimal transport, wavelet estimator},
year = {2021},
doi = {10.1214/20-AOS1997},
URL = {https://doi.org/10.1214/20-AOS1997}
}

@article{manole2024plugin,
  title={Plugin estimation of smooth optimal transport maps},
  author={Manole, Tudor and Balakrishnan, Sivaraman and Niles-Weed, Jonathan and Wasserman, Larry},
  journal={The Annals of Statistics},
  volume={52},
  number={3},
  pages={966--998},
  year={2024},
  publisher={Institute of Mathematical Statistics}
}

@article{Caffarelli1996Bound2,
  title = {Boundary Regularity of Maps with Convex Potentials--II},
  volume = {144},
  DOI = {10.2307/2118564},
  number = {3},
  journal = {The Annals of Mathematics},
  publisher = {JSTOR},
  author = {Caffarelli,  Luis A.},
  year = {1996},
  pages = {453}
}

@article{Urbas1997,
author = {Urbas, John},
journal = {Journal für die reine und angewandte Mathematik},
keywords = {continuity method; globally smooth convex solutions},
pages = {115-124},
title = {On the second boundary value problem for equations of {M}onge-{A}mpère type.},
url = {http://eudml.org/doc/153916},
volume = {487},
year = {1997},
}

@inproceedings{grave2019unsupervised,
  title={Unsupervised alignment of embeddings with wasserstein procrustes},
  author={Grave, Edouard and Joulin, Armand and Berthet, Quentin},
  booktitle={The 22nd International Conference on Artificial Intelligence and Statistics},
  pages={1880--1890},
  year={2019},
  organization={PMLR}
}

@article{hundrieser2024empirical,
  title={Empirical optimal transport under estimated costs: Distributional limits and statistical applications},
  author={Hundrieser, Shayan and Mordant, Gilles and Weitkamp, Christoph A and Munk, Axel},
  journal={Stochastic Processes and their Applications},
  volume={178},
  pages={104462},
  year={2024},
  publisher={Elsevier}
}

@article{pooladian2024neural,
  title={Neural optimal transport with {L}agrangian costs},
  author={Pooladian, Aram-Alexandre and Domingo-Enrich, Carles and Chen, Ricky TQ and Amos, Brandon},
  journal={arXiv preprint arXiv:2406.00288},
  year={2024}
}

@article{pal2024difference,
  title={On the difference between entropic cost and the optimal transport cost},
  author={Pal, Soumik},
  journal={The Annals of Applied Probability},
  volume={34},
  number={1B},
  pages={1003--1028},
  year={2024},
  publisher={Institute of Mathematical Statistics}
}

@article{vacher2024optimal,
  title={Optimal estimation of smooth transport maps with kernel {SoS}},
  author={Vacher, Adrien and Muzellec, Boris and Bach, Francis and Vialard, Fran{\c{c}}ois-Xavier and Rudi, Alessandro},
  journal={SIAM Journal on Mathematics of Data Science},
  volume={6},
  number={2},
  pages={311--342},
  year={2024},
  publisher={SIAM}
}

@article{chewi2024statistical,
  title={Statistical optimal transport},
  author={Chewi, Sinho and Niles-Weed, Jonathan and Rigollet, Philippe},
  journal={arXiv preprint arXiv:2407.18163},
  year={2024}
}

@inproceedings{pooladian2022debiaser,
  title={Debiaser beware: Pitfalls of centering regularized transport maps},
  author={Pooladian, Aram-Alexandre and Cuturi, Marco and Niles-Weed, Jonathan},
  booktitle={International Conference on Machine Learning},
  pages={17830--17847},
  year={2022},
  organization={PMLR}
}

@article{einmahl2005uniform,
author = {Uwe Einmahl and David M. Mason},
title = {{Uniform in bandwidth consistency of kernel-type function estimators}},
volume = {33},
journal = {The Annals of Statistics},
number = {3},
publisher = {Institute of Mathematical Statistics},
pages = {1380 -- 1403},
keywords = {consistency, Kernel-type function estimator, uniform in bandwidth},
year = {2005}
}

@article{carlier2020differential,
  title={A differential approach to the multi-marginal Schrödinger system},
  author={Carlier, Guillaume and Laborde, Maxime},
  journal={SIAM Journal on Mathematical Analysis},
  volume={52},
  number={1},
  pages={709--717},
  year={2020},
  publisher={SIAM}
}

@article{landa2021doubly,
  title={Doubly stochastic normalization of the {G}aussian kernel is robust to heteroskedastic noise},
  author={Landa, Boris and Coifman, Ronald R and Kluger, Yuval},
  journal={SIAM journal on mathematics of data science},
  volume={3},
  number={1},
  pages={388--413},
  year={2021},
  publisher={SIAM}
}

@article{mallasto2022entropy,
  title={Entropy-regularized 2-{W}asserstein distance between {G}aussian measures},
  author={Mallasto, Anton and Gerolin, Augusto and Minh, H{\`a} Quang},
  journal={Information Geometry},
  volume={5},
  number={1},
  pages={289--323},
  year={2022},
  publisher={Springer}
}

@misc{Flav,
    title ={{BIRS} Workshop: Entropic Regularization of Optimal Transport and Applications (21w5120)} ,
author="Flavien Léger", 
year=2021, 
url="https://www.birs.ca/events/2021/5-day-workshops/21w5120"
}

@article{groppe2023lower,
  title={Lower complexity adaptation for empirical entropic optimal transport},
  author={Groppe, Michel and Hundrieser, Shayan},
  journal={Journal of Machine Learning Research},
  volume={25},
  number={344},
  pages={1--55},
  year={2024}
}

@article{gonzalez2023weak,
  title={Weak limits for empirical entropic optimal transport: Beyond smooth costs},
  author={Gonz{\'a}lez-Sanz, Alberto and Hundrieser, Shayan},
  journal={arXiv preprint arXiv:2305.09745},
  year={2023}
}

@article{goldfeld2024statistical,
  title={Statistical inference with regularized optimal transport},
  author={Goldfeld, Ziv and Kato, Kengo and Rioux, Gabriel and Sadhu, Ritwik},
  journal={Information and Inference: A Journal of the IMA},
  volume={13},
  number={1},
  pages={iaad056},
  year={2024},
  publisher={Oxford University Press}
}

@article{klatt2022limit,
  title={Limit laws for empirical optimal solutions in random linear programs},
  author={Klatt, Marcel and Munk, Axel and Zemel, Yoav},
  journal={Annals of Operations Research},
  volume={315},
  number={1},
  pages={251--278},
  year={2022},
  publisher={Springer}
}

@article{hundrieser2022unifying,
  title={A unifying approach to distributional limits for empirical optimal transport},
  author={Hundrieser, Shayan and Klatt, Marcel and Staudt, Thomas and Munk, Axel},
  journal={arXiv preprint arXiv:2202.12790},
  year={2022}
}

@article{del2023improved,
  title={An improved central limit theorem and fast convergence rates for entropic transportation costs},
  author={del Barrio, Eustasio and Sanz, Alberto Gonz{\'a}lez-Sanz and Loubes, Jean-Michel and Niles-Weed, Jonathan},
  journal={SIAM Journal on Mathematics of Data Science},
  volume={5},
  number={3},
  pages={639--669},
  year={2023},
  publisher={SIAM}
}

@article{del2019central,
author = {Eustasio {del Barrio} and Jean-Michel Loubes},
title = {{Central limit theorems for empirical transportation cost in general dimension}},
volume = {47},
journal = {The Annals of Probability},
number = {2},
publisher = {Institute of Mathematical Statistics},
pages = {926 -- 951},
keywords = {CLT, Efron–Stein inequality, optimal matching, Optimal transportation},
year = {2019},
doi = {10.1214/18-AOP1275},
URL = {https://doi.org/10.1214/18-AOP1275}
}

@article{sommerfeld2018inference,
  title={Inference for empirical {W}asserstein distances on finite spaces},
  author={Sommerfeld, Max and Munk, Axel},
  journal={Journal of the Royal Statistical Society Series B: Statistical Methodology},
  volume={80},
  number={1},
  pages={219--238},
  year={2018},
  publisher={Oxford University Press}
}

@misc{hundrieser2023empiricaloptimaltransportestimated,
      title={Empirical Optimal Transport under Estimated Costs: Distributional Limits and Statistical Applications}, 
      author={Shayan Hundrieser and Gilles Mordant and Christoph Alexander Weitkamp and Axel Munk},
      year={2023},
      eprint={2301.01287},
      archivePrefix={arXiv},
      primaryClass={math.ST}
}

@article{agarwal2024iterated,
      title={Iterated {S}chr\"odinger bridge approximation to {W}asserstein Gradient Flows}, 
      author={Medha Agarwal and Zaid Harchaoui and Garrett Mulcahy and Soumik Pal},
      year={2024},
      journal={arXiv:2406.10823}
}

@article{manole2023central,
  title={Central limit theorems for smooth optimal transport maps},
  author={Manole, Tudor and Balakrishnan, Sivaraman and Niles-Weed, Jonathan and Wasserman, Larry},
  journal={arXiv:2312.12407},
  year={2023}
}

@article{gonzalez2022weak,
  title={Weak limits of entropy regularized optimal transport; potentials, plans and divergences},
  author={Gonz{\'a}lez-Sanz, Alberto and Loubes, Jean-Michel and Niles-Weed, Jonathan},
  journal={arXiv preprint arXiv:2207.07427},
  year={2022}
}

@article{kim2023some,
  title={Some identities on generalized harmonic numbers and generalized harmonic functions},
  author={Kim, Dae San and Kim, Hyekyung and Kim, Taekyun},
  journal={Demonstratio Mathematica},
  volume={56},
  number={1},
  pages={20220229},
  year={2023},
  publisher={De Gruyter}
}

@article{sesma2017roman,
  title={The {R}oman harmonic numbers revisited},
  author={Sesma, Javier},
  journal={Journal of Number Theory},
  volume={180},
  pages={544--565},
  year={2017},
  publisher={Elsevier}
}

@inproceedings{zhou2020nonparametric,
  title={Nonparametric score estimators},
  author={Zhou, Yuhao and Shi, Jiaxin and Zhu, Jun},
  booktitle={International Conference on Machine Learning},
  pages={11513--11522},
  year={2020},
  organization={PMLR}
}

@inproceedings{blondel2018smooth,
  title={Smooth and sparse optimal transport},
  author={Blondel, Mathieu and Seguy, Vivien and Rolet, Antoine},
  booktitle={International conference on artificial intelligence and statistics},
  pages={880--889},
  year={2018},
  organization={PMLR}
}

@misc{wibisono2024optimal,
      title={Optimal score estimation via empirical {B}ayes smoothing}, 
      author={Andre Wibisono and Yihong Wu and Kaylee Yingxi Yang},
      year={2024},
      eprint={2402.07747},
      archivePrefix={arXiv},
      primaryClass={math.ST}
}

@article{sriperumbudur2017density,
  title={Density estimation in infinite dimensional exponential families},
  author={Sriperumbudur, Bharath and Fukumizu, Kenji and Gretton, Arthur and Hyv, Aapo and Kumar, Revant and others},
  journal={Journal of Machine Learning Research},
  volume={18},
  number={57},
  pages={1--59},
  year={2017}
}

@article{goldfeld2022limit,
  title={Limit theorems for entropic optimal transport maps and the {S}inkhorn divergence},
  author={Goldfeld, Ziv and Kato, Kengo and Rioux, Gabriel and Sadhu, Ritwik},
  journal={arXiv preprint arXiv:2207.08683},
  year={2022}
}

@book{Tsy09,
	author = {Tsybakov, Alexandre B.},
	publisher = {Springer, New York},
	series = {Springer Series in Statistics},
	title = {Introduction to nonparametric estimation},
	year = {2009},
	}

@article{landa2023robust,
author = {Landa, Boris and Cheng, Xiuyuan},
title = {Robust Inference of Manifold Density and Geometry by Doubly Stochastic Scaling},
journal = {SIAM Journal on Mathematics of Data Science},
volume = {5},
number = {3},
pages = {589-614},
year = {2023},
doi = {10.1137/22M1516968}
}

@inproceedings{song2020sliced,
  title={Sliced score matching: A scalable approach to density and score estimation},
  author={Song, Yang and Garg, Sahaj and Shi, Jiaxin and Ermon, Stefano},
  booktitle={Uncertainty in Artificial Intelligence},
  pages={574--584},
  year={2020},
  organization={PMLR}
}

@book{van2000asymptotic,
  title={Asymptotic statistics},
  author={Van der Vaart, Aad W},
  volume={3},
  year={2000},
  publisher={Cambridge university press}
}

@article{seregin2010nonparametric,
  title={Nonparametric estimation of multivariate convex-transformed densities},
  author={Seregin, Arseni and Wellner, Jon A},
  journal={Annals of statistics},
  volume={38},
  number={6},
  pages={3751},
  year={2010}
}

@article{zhang2023manifold,
  title={Manifold learning with sparse regularised optimal transport},
  author={Zhang, Stephen and Mordant, Gilles and Matsumoto, Tetsuya and Schiebinger, Geoffrey},
  journal={arXiv preprint arXiv:2307.09816},
  year={2023}
}

@book{silverman2018density,
  title={Density estimation for statistics and data analysis},
  author={Silverman, Bernard W},
  year={2018},
  publisher={Routledge}
}

@article{samworth2018recent,
author = {Richard J. Samworth},
title = {{Recent Progress in Log-Concave Density Estimation}},
volume = {33},
journal = {Statistical Science},
number = {4},
publisher = {Institute of Mathematical Statistics},
pages = {493 -- 509},
keywords = {Log-concavity, maximum likelihood estimation},
year = {2018},
doi = {10.1214/18-STS666},
URL = {https://doi.org/10.1214/18-STS666}
}

@article{marshall2019manifold,
  title={Manifold learning with bi-stochastic kernels},
  author={Marshall, Nicholas F and Coifman, Ronald R},
  journal={IMA Journal of Applied Mathematics},
  volume={84},
  number={3},
  pages={455--482},
  year={2019},
  publisher={Oxford University Press}
}

@article {hutter2021minimax,
    AUTHOR = {H\"{u}tter, Jan-Christian and Rigollet, Philippe},
     TITLE = {Minimax estimation of smooth optimal transport maps},
   JOURNAL = {Ann. Statist.},
  FJOURNAL = {The Annals of Statistics},
    VOLUME = {49},
      YEAR = {2021},
    NUMBER = {2},
     PAGES = {1166--1194},
}

@book{Vil08,
	author = {Villani, C{\'e}dric},
	publisher = {Springer},
	title = {Optimal transport: old and new},
	volume = {338},
	year = {2009}}

@article{San15,
  title={Optimal transport for applied mathematicians},
  author={Santambrogio, Filippo},
  journal={Birk{\"a}user, NY},
  volume={55},
  number={58-63},
  pages={94},
  year={2015},
  publisher={Springer}
}

@article{pooladian2023minimax,
  title={Minimax estimation of discontinuous optimal transport maps: {T}he semi-discrete case},
  author={Pooladian, Aram-Alexandre and Divol, Vincent and Niles-Weed, Jonathan},
  journal={arXiv preprint arXiv:2301.11302},
  year={2023}
}

@book{villani2021topics,
  title={Topics in optimal transportation},
  author={Villani, C{\'e}dric},
  volume={58},
  year={2021},
  publisher={American Mathematical Soc.}
}

@inproceedings{genevay2019sample,
  title={Sample complexity of {S}inkhorn divergences},
  author={Genevay, Aude and Chizat, L{\'e}naic and Bach, Francis and Cuturi, Marco and Peyr{\'e}, Gabriel},
  booktitle={The 22nd International Conference on Artificial Intelligence and Statistics},
  pages={1574--1583},
  year={2019},
  organization={PMLR}
}

@article{nutz2021introduction,
  title={Introduction to entropic optimal transport},
  author={Nutz, Marcel},
  journal={Lecture notes, Columbia University},
  year={2021}
}

@article{hyvarinen2005estimation,
  title={Estimation of non-normalized statistical models by score matching.},
  author={Hyv{\"a}rinen, Aapo and Dayan, Peter},
  journal={Journal of Machine Learning Research},
  volume={6},
  number={4},
  year={2005}
}

@article{wibisono2022convergence,
  title={Convergence in KL divergence of the inexact langevin algorithm with application to score-based generative models},
  author={Wibisono, Andre and Yingxi Yang, Kaylee},
  journal={arXiv e-prints},
  pages={arXiv--2211},
  year={2022}
}

@inproceedings{feydy2019interpolating,
  title={Interpolating between optimal transport and mmd using sinkhorn divergences},
  author={Feydy, Jean and S{\'e}journ{\'e}, Thibault and Vialard, Fran{\c{c}}ois-Xavier and Amari, Shun-ichi and Trouv{\'e}, Alain and Peyr{\'e}, Gabriel},
  booktitle={The 22nd International Conference on Artificial Intelligence and Statistics},
  pages={2681--2690},
  year={2019},
  organization={PMLR}
}

@article{nutz2021entropic,
  title={Entropic optimal transport: convergence of potentials},
  author={Nutz, Marcel and Wiesel, Johannes},
  journal={Probability Theory and Related Fields},
  pages={1--24},
  year={2021},
  publisher={Springer}
}

@article{conforti2021formula,
	author = {Conforti, Giovanni and Tamanini, Luca},
	journal = {Journal of Functional Analysis},
	number = {11},
	pages = {108964},
	publisher = {Elsevier},
	title = {A formula for the time derivative of the entropic cost and applications},
	volume = {280},
	year = {2021}}

@article{mallasto2021entropy,
	author = {Mallasto, Anton and Gerolin, Augusto and Minh, H{\`a} Quang},
	journal = {Information Geometry},
	pages = {1--35},
	publisher = {Springer},
	title = {Entropy-regularized 2-{W}asserstein distance between {G}aussian measures},
	year = {2021}}

@article{pooladian2021entropic,
  title={Entropic estimation of optimal transport maps},
  author={Pooladian, Aram-Alexandre and Niles-Weed, Jonathan},
  journal={arXiv preprint arXiv:2109.12004},
  year={2021}
}

@article{Bre91,
	author = {Brenier, Yann},
	doi = {10.1002/cpa.3160440402},
	fjournal = {Communications on Pure and Applied Mathematics},
	issn = {0010-3640},
	journal = {Comm. Pure Appl. Math.},
	mrclass = {46E40 (35Q99 46E99 49Q99)},
	mrnumber = {1100809},
	mrreviewer = {Robert McOwen},
	number = {4},
	pages = {375--417},
	title = {Polar factorization and monotone rearrangement of vector-valued functions},
	url = {https://doi.org/10.1002/cpa.3160440402},
	volume = {44},
	year = {1991},
	Bdsk-Url-1 = {https://doi.org/10.1002/cpa.3160440402}}

@phdthesis{genevay2019entropy,
	author = {Genevay, Aude},
	school = {Paris Sciences et Lettres (ComUE)},
	title = {Entropy-regularized optimal transport for machine learning},
	year = {2019}}

@article{PeyCut19,
	author = {Peyr{\'e}, Gabriel and Cuturi, Marco},
	date-added = {2019-04-02 16:41:46 -0400},
	date-modified = {2019-04-02 16:41:59 -0400},
	journal = {Foundations and Trends{\textregistered} in Machine Learning},
	number = {5-6},
	pages = {355--607},
	publisher = {Now Publishers, Inc.},
	title = {Computational optimal transport},
	volume = {11},
	year = {2019}}

@article{Csi75,
	author = {Csisz\'{a}r, I.},
	fjournal = {The Annals of Probability},
	journal = {Ann. Probability},
	pages = {146--158},
	title = {{$I$}-divergence geometry of probability distributions and minimization problems},
	volume = {3},
	year = {1975}}

@article{cuturi2013sinkhorn,
  title={Sinkhorn distances: lightspeed computation of optimal transport},
  author={Cuturi, Marco},
  journal={Advances in Neural Information Processing Systems},
  volume={26},
  year={2013}
}

@article{mena2019statistical,
  title={Statistical bounds for entropic optimal transport: sample complexity and the central limit theorem},
  author={Mena, Gonzalo and Niles-Weed, Jonathan},
  journal={Advances in Neural Information Processing Systems},
  volume={32},
  year={2019}
}

\newpage
\appendix

%
%

\section{Additional results}

\subsection{EOT in the Gaussian setting}
\label{sec: GausEOT}
In this section we first state the results about the potentials in the Gaussian case. 

This is the content of Theorem~ 2 in \citet{mallasto2021entropy}, which we slightly rephrased and took out the part of interest to us.

\begin{theo}
Let $\mu_i = \mathcal{N}(0,\Sigma_i)$ for $i\in {0,1}$ be two centred Gaussian distributions in $\reals^d$, write 
$N_{i,j}^\eps = \left( I + \tfrac{16}{\eps^2}  \Sigma_i^{1/2} \Sigma_j \Sigma_i^{1/2}\right)^{1/2}$ and $M^\eps = I + \left( I + \tfrac{16}{\eps^2}  \Sigma_0 \Sigma_1\right)^{1/2}.
$
Then, 
the density of the optimal transport plan is given by 
\[
\pi^\eps(x,y) = \alpha^{\eps}(x) \alpha^{\eps}(x) \exp\left(- \frac{\|x-y\|^2}{\eps} \right) \mu_0(x) \mu_1(y),
\]
where 
\(
\alpha^\eps(x) = \exp( x^\top A x +a ),
\)\(
\beta^\eps(y) = \exp( y^\top B y +b )\) and 
\begin{align*}
&A = \frac14 \Sigma_0^{-1/2} \left( I + \frac4\eps \Sigma_0 - N_{01}^\eps \right)  \Sigma_0^{-1/2}\\
&B = \frac14 \Sigma_1^{-1/2} \left( I + \frac4\eps \Sigma_1 - N_{10}^\eps \right)  \Sigma_1^{-1/2}\\
&\exp(a+b) = \sqrt{2^{-n} \det M^\eps}.
\end{align*}
\end{theo}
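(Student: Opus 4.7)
The plan is to exploit the fact that a product of Gaussian densities with a quadratic interaction remains Gaussian. Because both marginals $\mu_0,\mu_1$ are centered Gaussian and the cost is quadratic, the entropic plan $\pi^\eps$ must itself be a centered Gaussian measure on $\reals^{2d}$. I would therefore make the ansatz that the Schr\"odinger potentials $f_\eps, g_\eps$ are quadratic polynomials, which by the centering of $\mu_0,\mu_1$ contain no linear term: $f_\eps(x) = x^\top A x + a$ and $g_\eps(y) = y^\top B y + b$ for symmetric $A,B$. Inserting this ansatz into the primal-dual recovery formula $\dd\pi_\eps(x,y) = \exp((f_\eps(x)+g_\eps(y)-c(x,y))/\eps)\,\dd\mu_0(x)\dd\mu_1(y)$ already produces the announced product form.

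The next step is to enforce the marginal constraints. The joint density is a centered Gaussian with precision matrix of the block form
\[
Q = \begin{pmatrix} K_0^{-1} - 2A + \tfrac{2}{\eps} I & -\tfrac{2}{\eps} I \\ -\tfrac{2}{\eps} I & K_1^{-1} - 2B + \tfrac{2}{\eps} I \end{pmatrix}.
\]
Using the Schur complement identity for the marginals of a Gaussian, the requirements that the $x$-marginal equal $\mathcal{N}(0,K_0)$ and the $y$-marginal equal $\mathcal{N}(0,K_1)$ translate into a coupled system of matrix equations for $(A,B)$. Eliminating one of the two unknowns produces a matrix Riccati-type equation; after conjugating by $K_0^{1/2}$ (respectively $K_1^{1/2}$) and completing the square, one recognizes the expression $I+\tfrac{16}{\eps^2}K_i^{1/2}K_jK_i^{1/2}$, and the admissible root yielding a positive precision matrix is the principal square root $N_{i,j}^\eps$. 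Substituting back into the expressions for $A$ and $B$ gives the closed forms in the statement.

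Finally, the scalar $a+b$ is fixed by the normalization $\int \pi^\eps = 1$: carrying out the Gaussian integral reduces to a determinant of $Q$, and a direct computation (using $\det(I+X Y)=\det(I+YX)$ together with the definition of $M^\eps$) yields $\exp(a+b) = \sqrt{2^{-n}\det M^\eps}$. The main obstacle is purely algebraic: one must choose the correct branch of the matrix square root when inverting the Riccati equation and then verify that the resulting $Q$ is positive definite, which in turn hinges on the similarity identity relating $K_0^{1/2}K_1 K_0^{1/2}$ to $K_0 K_1$ so that the formulas for $A,B$ (built from $N_{01}^\eps, N_{10}^\eps$) match the scalar normalization written in terms of $M^\eps$.
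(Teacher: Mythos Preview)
The paper does not give its own proof of this theorem: it is quoted as background material in the appendix, explicitly attributed to \citet{mallasto2021entropy} (``This is the content of Theorem~2 in \citet{mallasto2021entropy}, which we slightly rephrased and took out the part of interest to us''). So there is no in-paper argument to compare against.

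That said, your sketch is the standard and correct route, and it matches in spirit what is done in the cited reference: make the quadratic ansatz for the Schr\"odinger potentials, read off the joint precision matrix, enforce both marginal constraints via Schur complements, reduce to a matrix Riccati equation whose admissible solution is the principal square root $N_{i,j}^\eps$, and fix $a+b$ by normalization. The only places that require care are exactly the ones you flag: selecting the root that keeps $Q\succ 0$, and the similarity/Sylvester identity linking $K_0^{1/2}K_1K_0^{1/2}$ to $K_0K_1$ so that the constant written via $M^\eps$ agrees with the formulas for $A,B$ written via $N_{01}^\eps,N_{10}^\eps$. Nothing is missing conceptually; what remains is the algebra.
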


\subsection{The approach of Manole et al. to estimate the transport map.}
\label{sec: Manole}

In the recent work by \cite{manole2024plugin}, the authors propose to rely on kernel density estimators the Monge--Ampère equation to estimate transport plans given, as above, an i.i.d.\ sample. In this appendix we will just explain the ideas so that the reader can compare their results to the picture developed in  Section~\ref{sec: TwoMeas}. 

Start from the Monge--Ampère equation, assuming two probability densities $p,q$ nonvanishing on the torus,
\[
\det \nabla^2 \phi_0 = \frac{p}{g \circ \nabla \phi_0}.
\]
Then, if one replaces $q$ by a kernel density estimator $\hat q_h$ with bandwidth $h$ in the equation above, one has 
\[
\det \nabla^2 \hat  \phi = \frac{ p}{\hat g_h \circ \nabla \hat\phi}.
\]
As $\hat p_h$ converges to $p$ as the sample size increases and the bandwidth decreases, 
\cite{manole2024plugin} prove that 
\[
 \hat  \phi - \phi_0 \asymp L^{-1}[ \hat q_h - a], 
\]
where 
\[
L[u] = - \operatorname{div} \big( q \nabla u (\nabla \phi_0^*)\big),
\]
where $\operatorname{div}$ is the divergence operator and the asymptotic equivalence is in a suitable topology.
Their theorem 5 then shows that, upon choosing properly the bandwidth $h_n$ as a function of the sample size and for sufficiently smooth densities on the torus,
\[
\sqrt{n h_n ^{d-2}} \left(\hat T_n (x) - T_0(x)\right) \dto \mathcal{N}(0, \Sigma(x)), \quad \text{as } n \to \infty,
\]
where 
\[
\Sigma (x) = \frac1 {p(x)} \int_{ \reals^d} \xi\xi^\top \left( 
\frac { \mathcal{F}[\cK](\mathcal{M}(x)\xi)}
{
2\pi \langle \mathcal{M}(x)\xi, \xi\rangle) 
}
\right) \diff \xi, \quad  \text{with}\ \mathcal{M}(x) := \nabla^2 \phi^*_0( \nabla^2 \phi_0(x)), 
\]
where $\cK$ is the kernel used.

\subsection{Fourier intuition}
\label{sec: AddRes}

Up to shifting the cost function by $ \eps d \log (2\pi\eps) $, we can rewrite \eqref{eq:fixPtEq} as 
\[
u_\eps = \gamma_\eps * \left(\frac{\mu}{u}\right),
\]
with $\gamma_\eps$ the isotropic centred Gaussian density with variance $\eps$. 
Assuming that the Fourier transform of both sides is well defined, expanding in $\eps$ the Fourier transform of $\gamma_\eps$, we get 
\[
\mathcal{F}(u_\eps)[\xi] = \mathcal{F}\left(\frac{\mu}{u_\eps}\right)[\xi]\left( 1 -  \frac\eps{2}\|\xi\|^2 + \oh(\eps)\right).
\]
Setting $u_\eps = u_0 + \eps u_1 + \oh(\eps)$, a further expansion gives
\[
\mathcal{F}(u_0)[\xi] + \eps \mathcal{F}(u_1)[\xi] =  \mathcal{F}\left(\frac{\mu}{u_0}\right)[\xi]- \eps \mathcal{F}\left(\frac{\mu u_1}{u_0^2}\right)[\xi] - \frac\eps{2}\|\xi\|^2\mathcal{F}\left(\frac{\mu}{u_0}\right)[\xi] + \oh(\eps).
\]
 At the constant scale, $\mathcal{F}(u_0)[\xi] = \mathcal{F}\left({\mu}/{u_0}\right)[\xi]$ enforces $u_0=\sqrt{\mu}$. At the $\eps$ scale, one thus gets the equation
 \[
 \mathcal{F}(u_1)[\xi] = -\frac14\|\xi\|^2\mathcal{F}\left(\sqrt{\mu}\right)[\xi], 
 \]
 which is characteristic for elliptic differential equations, as expected from the trace term in Proposition~\ref{prop:secOrdExp}.
 Note that this view on the problem only requires the Fourier transform of the  various quantities of interest to be defined, which is quite mild.

\subsection{The constant of Theorem~\ref{thm: multivariate_limit}}
\label{sec: RomanNumbers}
Note that $C_3$ is related to binomial transform and Roman harmonic numbers \citet{sesma2017roman}. We deem the full simplification of this quantity beyond the scope of the current paper. However, a first simplification for $d$ even is the content of the following remark.
\begin{rmk}
For $d/2 \in \NN$, using the equality \citep[Lemma~2.1]{kim2023some}
\[
\int_0^1 (1-t)^{\kappa'} (\log t)^{d/2} t^{\eta+1 }\diff t = (d/2-1)! \sum_{\eta'=0}^{\kappa'} {\kappa'\choose\eta'} (-1)^{\eta' - (d/2-1)},
\]
we can 
simplify 
\begin{multline*}
\sum_{\eta=0}^\kappa\sum_{\eta'=0}^{\kappa'} {\kappa \choose  \eta} ( -1)^\eta ( -1)^{\eta'} {\kappa' \choose  \eta'} 
( \eta + \eta' +2)^{-d/2} \\
 = \frac{(-1)^{1-d/2}}{(d/2 -1)!} \int_0^1 (1-t)^{\kappa+\kappa'} t (\log t)^{d/2-1} \diff t,
\end{multline*}
for $\kappa, \kappa'>0$.
\end{rmk}

\section{Proofs of Propositions}
\label{app:Proofs}

\subsection{Proof of Proposition~\ref{prop:secOrdExp}}
\label{sec:secOrdExp}

\begin{proof}[Proof of Proposition~\ref{prop:secOrdExp}]
It holds that 
\begin{align*}
    u(x)
    &= \int \frac{k_\eps(x,y)}{u(y)} \diff {\mu}(y) \\
    &= \eps^{d/2} \int \frac{\mu(x+ \sqrt{\eps} z)}{u(x+ \sqrt{\eps} z)}   \phi(z)\diff z.
\end{align*}
where $\phi(z)= \exp(- \|z\|^2/2)$. 
Then, using the assumptions of the sequence of potentials, 
\begin{align*}
 &u(x)\\
 &= \frac{\mu(x)}{u(x)} \eps^{d/2} \int \frac{1+ \sqrt{\eps} \nabla \log \mu(x)^\top z+ \eps z^\top \nabla^2{\mu(x)}z /(2\mu(x))+\oh(\eps)}{1+ \sqrt{\eps}\nabla \log u(x)^\top z + \eps z^\top \nabla^2{u(x)}z/(2u(x))+ \oh(\eps)}   \phi(z)\diff z .
\end{align*}

Let us focus on the first summand in the display above and develop the inner fraction to get
\begin{align*}
&\frac{1+ \sqrt{\eps} \nabla \log \mu(x)^\top z+ \eps z^\top \nabla^2{\mu(x)}z /(2\mu(x)) +\oh(\eps)}{1+ \sqrt{\eps}\nabla \log u(x)^\top z+\eps z^\top \nabla^2{u(x)}z /(2u(x)) + \oh(\eps)} \\
&\qquad \qquad\qquad \qquad=\big( 1+ \sqrt{\eps} \nabla \log \mu(x)^\top z+ \eps z^\top \nabla^2{\mu(x)}z/(2\mu(x)) +\oh(\eps)\big)\\
&\qquad \qquad\qquad \qquad\times\Big(1- \sqrt{\eps}\nabla \log u(x)^\top z+ \eps z^\top \nabla^2{u(x)}z /(2u(x)) \\
 &\qquad \qquad \qquad\qquad \qquad+ \Big(\sqrt{\eps}\nabla \log u(x)^\top z\Big)^2 + \Oh \left( \eps^{3/2} \right) +  \oh(\eps)\Big).
\end{align*}
Above, the term of order $ \Oh \left( \eps^{3/2} \right)$ behaves like 
\[
 \eps^{3/2} \nabla \log \mu(x)^\top z  \frac{z^\top \nabla^2{\mu(x)}z}{2\mu(x)} + C \eps^{3/2}  \|z \|^2,
\]
owing to the relationship between $u$ and $\mu$, which we show below. It is thus negligible once integrated against a standard Gaussian.
Also, the integral of the second-order Taylor residual of $\mu$ with respect to the Gaussian measure is well-controlled; it
has the form 
\[
\sum_{\vert \alpha \vert =2} \int h_\alpha( x +  \sqrt{\eps} z) ( \sqrt{\eps } z)^\alpha e^{-\frac12 \|z\|^2} \dd z,
\]
for functions $h_\alpha$ such that $\lim_{z\to x} h_\alpha( z)=0$. Clearly, 
\[
\eps \int h_\alpha( x +  \sqrt{\eps} z) z^\alpha e^{-\frac12 \|z\|^2} \dd z =\oh(\eps).
\]
Altogether,
\begin{align*}
&u(x)^2= \mu(x) \eps^{d/2} \\
& \quad \times\int  \Big( 1 + \eps z^\top \Big( \frac{\nabla^2\mu(x)}{2\mu(x)} - \frac{\nabla^2u(x)}{2u(x)} - \nabla \log \mu(x) \nabla\log u(x)^\top\\ & \qquad\qquad +  \nabla \log u(x) \nabla\log u(x)^\top \Big)z + \oh(\eps)\Big)  \phi(z)\diff z
\end{align*}
Finally, 
\begin{align*}
&u(x)^2
= \mu(x) (2\pi\eps)^{d/2}  \left( 1 + \eps \operatorname{tr} \left( \frac{\nabla^2\mu(x)}{2\mu(x)} - \frac{\nabla^2u(x)}{2u(x)}  \right)+ \oh(\eps)\right) . 
\end{align*}
We can further refine this result by ``bootstrapping'', i.e., observing that the asymptotic expansion $\nabla \log u(x) =  \nabla \log \mu^{1/2} +\oh(1)$ can be plugged-in. Therefore, 
\[
\nabla \log \mu(x) \nabla\log u(x)^\top = \frac12\nabla \log \mu(x) \big(\nabla \log \mu(x)  \big)^\top +\oh(1).
\]
Also, 
\begin{align*}
   & \nabla^2u(x)\\
    &= (2\pi\eps)^{d/4} \mu^{1/2}(x) \big(1+\oh(1)\big)\\
    &=(2\pi\eps)^{d/4}  \nabla \left(\frac12 \mu^{-1/2}(x)\nabla \mu(x)\right)\big(1+\oh(1)\big) \\
    &=(2\pi\eps)^{d/4}  \frac12\left(\frac{-1}2 \mu^{-3/2}(x) \nabla \mu(x) \big(\nabla \mu(x)\big)^\top
+\mu^{-1/2}(x) \nabla^2\mu(x)
\right)\big(1+\oh(1)\big) 
\end{align*}
Thus, 
\begin{align*}
    &\operatorname{tr} \left( \frac{\nabla^2\mu(x)}{2\mu(x)} - \frac{\nabla^2u(x)}{2u(x)} - \nabla \log \mu(x) \nabla\log u(x)^\top \right) \\
    &\qquad = 
    \operatorname{tr} \Big( \frac{\nabla^2\mu(x)}{2\mu(x)} + \frac18\nabla \log \mu(x) \big(\nabla \log \mu(x) \big)^\top
    - \frac{\nabla^2\mu(x)}{4\mu(x)} \\
   & \quad\qquad\qquad - \frac14\nabla \log \mu(x) \big(\nabla \log \mu(x)  \Big)^\top \Big) +\oh(1)
\end{align*}
and the claim follows.
\end{proof}

\subsection{ Proof of Proposition~\ref{prop: PotSingMeasL2}}
\label{sec: PotSingMeasL2}

\begin{proof}{Proof of Proposition~\ref{prop: PotSingMeasL2}}

We start with the first part of the claim.
First note that, for $f$ a continuous function on a compact set $U$, 
\[
\lim_{\eps \to 0}\gamma_\eps  * f - f   =0
\]
almost everywhere on any compact included in $U$.
Furthermore, 
\[
\lim_{\eps \to 0}  \|\gamma_\eps  * f - f \|_{L^2(\mu)}  =0.
\]

Consider the operator 
\[
\psi \mapsto \psi(x) + \log \int \gamma_\eps ( x, y)  e^{\psi(y)} \mu (y) \diff  y. 
\] 
We aim at evaluating it at two quantities, $\psi^* := f_\eps/\eps$ and $\psi =-\log( \mu )/ 2$. 
Set 
\[
E_\eps(x) =  -\log( \mu(x) )/ 2+ \log \int \gamma_\eps ( x, y)   \sqrt{\mu (y)} \diff  y.
\]
Set $\psi_t = t \psi + (1-t) \psi^* $.
Then, 
\begin{align*}
E_\eps(x) &= \big(\psi(x) - \psi^*(x)\big) + \int_0^1 \frac{\int \gamma_\eps ( x, y)  e^{\psi_t(y)} \big(\psi(y) - \psi^*(y)\big) \mu (y) \diff  y  }{  \int \gamma_\eps ( x, z)  e^{\psi_t(z)} \mu (z) \diff  z} \diff t \\
&=: (\id + K_{\bar t, \eps} )[ \psi - \psi^*](x),
\end{align*}
by the mean value-theorem.
Observe that $K_{\bar t, \eps}$ is a bounded linear operator from $\mathcal{C} \to \mathcal{C}$ and nonexpansive.  One also sees that this operator admits the eigenvalue 1 corresponding to the eigenfunction 1.   This operator can be seen as the dual Markov operator to a Markov chain and $(\id + K_{\bar t, \eps} )^{-1}$ exists. 
Further remark that this operator interpolates between 
\begin{equation}
\label{eq: SOTop}
f \mapsto P[f]:= \int \pi_\eps(x,y) f(y) \mu(y) \diff (y) = \mathbb{E}_{\pi_\eps}[f(Y) \vert X=x].
\end{equation}
where the remainder is uniform over compact subsets. 

The operator \eqref{eq: SOTop} is self adjoint in $L^2(\mu)$ and owing to symmetry and the positivity of the Gaussian kernel, it has a spectrum in $[0,1]$. 

Furthermore, the spectrum of the operator 
\[ 
f \mapsto \frac{\int \gamma_\eps ( x, y) f(y) \sqrt{\mu (y)} \diff  y  }{  \int \gamma_\eps ( x, z)   \sqrt{\mu (z)} \diff  z} 
\] 
is positive as well as one sees by transforming the eigenvalue equation 
\[
\frac{\int \gamma_\eps ( x, y) f(y) \sqrt{\mu (y)} \diff  y  }{  \int \gamma_\eps ( x, z)   \sqrt{\mu (z)} \diff  z} = \lambda f(x),
\]
via the change  of variable $\tilde f(x) = \mu^{1/4}(x)\sqrt{ \int \gamma_\eps ( x, z)   \sqrt{\mu (z)} \diff  z }$, into the symmetric version 
\[
\int \frac{\mu^{1/4}(x) \gamma_\eps ( x, y) \tilde f(y)\mu^{1/4}(y)\diff  y  }{ \sqrt{ \int \gamma_\eps ( x, z)   \sqrt{\mu (z)} \diff  z}\sqrt{ \int \gamma_\eps ( y, z)   \sqrt{\mu (z)} \diff  z}} = \lambda  \tilde f(x).
\]
It follows that $(\id + K_{\bar t, \eps} )^{-1}$ has a bounded spectral norm that is uniform in $\eps$.
\end{proof}


\subsection{Proof of Proposition~\ref{prop: TwoMeasFixPt}}
\label{sec: TwoMeasFixPt}

\begin{proof}[Proof of Proposition~\ref{prop: TwoMeasFixPt}]

Combining the Schrödinger system with the definition of the residual in the Léger--Vialard expansion, we get
\begin{equation}
\label{eq: SingSystTwoMeas}
\frac{\tilde f_\eps(x)}{\eps} 
= - \log \left( \int  \frac{e^{-D(x,y)/\eps} \sqrt{\frac{\nu(y) }{\mu(x)}}}{
 \int e^{-D(z,y)/\eps}\ \sqrt{\frac{\mu(z) }{\nu(y)}}\ e^{\tilde f_\eps(z)/ \eps } \diff z
 } \diff y  \right).
\end{equation}

Further, appealing to the local quadraticity of the OT-divergence,
\begin{multline}
\label{eq: OTGausKern}
e^{-D(x,y)/\eps}\sqrt{ \frac{{\nu}(y)}{ \mu(x)}}  \\
=\exp \left( \frac{ -(y-x^*)^\top\nabla^2\phi_0^*( x^*)(y-x^*)+\oh(\|y-x^*\|^2) }{2\eps}  \right) \\
\times \frac{{\nu}^{1/2}(y) \sqrt{\det [\nabla^2 \varphi_0^*(x^*)]}}{ \sqrt{\nu(x^*)}}  
\end{multline}
and similarly for the other integrand. 

At the first order, the kernel in the display above is an approximation of identity. 
We thus see that setting $e^{\tilde f_\eps(z)/ \eps } =1$ ensures that both sides of equation~\eqref{eq: OTGausKern} are equal up to terms of order $\eps$ in the interior of the support.

A linearization similar to that carried out in Proposition~\ref{prop:secOrdExp}, yields a system of the form 
\[
\frac{\tilde f_\eps}{\eps} =
 \left(
 	 \id - \frac{K_{\eps}^2}{ K_{\eps}[1]} 
  \right)^{-1}
 \left[ 
  \log 
  \left( 
  \int  \frac{        e^{-D(x,y)/\eps} \sqrt{       \frac{   \nu(y) }  {\mu(x)}      }
  }{
 \int e^{-D(z,y)/\eps}\ \sqrt{     \frac    {\mu(z) }{\nu(y)}             } \diff z
 } 
 \diff y  
 \right) \right].
\]
To converge to a differential operator in $L^2$, the inverse in the display above needs to be rescaled by some power of $\eps$, while the logarithm converges to 0. The claim follows.  
\end{proof}

\subsection{Proof of Proposition~\ref{prop: otBregmanExp}}
\label{sec:otBregmanExp}

\begin{proof}[Proof of Proposition~\ref{prop: otBregmanExp}]
The strategy is the Laplace method applied to each of the two fixed-point equations, pushed one order beyond the leading Monge-Ampère relation. At leading order in $\eps$, the identity reduces to $\mu(x_0) = \nu(y)\det\nabla^2\varphi(x_0)$, which is precisely the Monge-Ampère equation. The $\Oh(\eps)$ correction comes from four contributions: the curvature of $\sqrt{\mu}$, a cross-term between $\nabla\sqrt{\mu}$ and the cubic in $v$ from the Taylor expansion of $D$ in direction $v$, the quartic term, and the square of the cubic, which we compute relying on Wick's theorem.

Fix $y$ and apply the Laplace method to the second equation of the system at the unique minimum $x_0 = T^{-1}(y)$ of $D(\cdot, y)$, where $\nabla^{2}_{xx} D(x_0, y) = A(y)^{-1}$. Make the change of variables $x = x_0 + \sqrt{\eps} v$ and Taylor-expand, using $D(x_0, y) = 0$, $\nabla_x D(x_0, y) = 0$, and $\partial^{k}_{x} D(x_0, y) = \partial^{k} \phi(x_0)$ for $k \ge 2$. This gives
\[
  -\tfrac{D(x_0 + \sqrt{\eps} v, y)}{\eps}
  = -\tfrac{1}{2} v^{\top} A^{-1} v
  - \tfrac{\sqrt{\eps}}{6} \phi_{ijk} v_i v_j v_k
  - \tfrac{\eps}{24} \phi_{ijk\ell} v_i v_j v_k v_\ell + O(\eps^{3/2}),
\]
and $\sqrt{\mu(x_0 + \sqrt{\eps} v)} = \sqrt{\mu(x_0)} + \sqrt{\eps} \nabla\!\sqrt{\mu} \cdot v + \tfrac{\eps}{2} v^{\top} \nabla^{2}\!\sqrt{\mu}\, v + O(\eps^{3/2})$. 

At leading order the identity reduces to $\sqrt{\mu(x_0) \det A(y) / \nu(y)} = 1$, which is the Monge--Amp\`ere equation $\mu(x_0) = \nu(y) \det \nabla^{2} \phi(x_0)$.

At order $\eps$, four contributions appear, evaluated via Wick's theorem (odd moments vanish, the quartic moment gives $A_{ij}A_{k\ell} + A_{ik}A_{j\ell} + A_{i\ell}A_{jk}$, the sextic moment gives the sum over the $15$ pairings).
First,  the Hessian of $\sqrt{\mu}$, producing the first trace term;
then, the cross-product of $\nabla\!\sqrt{\mu} \cdot v$ with the cubic in $v$, producing the $\phi_{ijk} \partial_\ell \sqrt{\mu}$ term.
  Finally,  the quartic $\phi_{ijk\ell}$ term, as well as, 
  the square $\tfrac{1}{2}(\tfrac{1}{6} \phi_{ijk} v_i v_j v_k)^{2} = \tfrac{1}{72} \phi_{ijk} \phi_{i'j'k'} v_i \cdots v_{k'}$, whose Gaussian average yields the sum over $15$ pairings.

Taking the log, dividing by the leading factor, and moving everything to the left gives the stated expansion, after splitting
\[
  -\tfrac{1}{4} \operatorname{tr}(A \nabla^{2} \log \mu)
  = -\tfrac{1}{2 \sqrt{\mu}} \operatorname{tr}(A \nabla^{2}\!\sqrt{\mu})
    + \tfrac{1}{2 \mu} \nabla\!\sqrt{\mu}^{\top} A \nabla\!\sqrt{\mu}
\]
into the two displayed terms. The first equation is treated identically with $(\mu, \phi, A)$ replaced by $(\nu, \phi^{*}, [\nabla^{2} \phi^{*}(T(x))]^{-1})$, yielding the symmetric identity.
\end{proof}


\subsection{Proof of Proposition~\ref{prop: Inversion}}
\label{sec: Inversion}

\begin{proof}[Proof of Proposition~\ref{prop: Inversion}]

The proof strategy is to first control the fluctuations of the empirical potentials vis-à-vis their population counterparts in a sufficiently strong topology.
Then, the linearization will follow from the exact same argument as in the proof of Theorem~\ref{thm: multivariate_limit}.
Finally, we'll have to invert the operators appearing in the linearization to get to the claim.

\paragraph{Control of fluctuations}
Recall the definition of $\tilde f_{\eps_n}, \tilde g_{\eps_n}$ from
\eqref{eq: ExpTwoMeas}. We focus on their empirical counterpart, that is,
the pair of functions\footnote{The pair is unique up to additive constants.}
$(\bar f_{\eps_n}, \bar g_{\eps_n})$ solving
\begin{align}
\label{eq: defBar}
0 &= \bar f_{\eps_n}(x) + \eps_n \log \int
   \frac{e^{(\bar g_{\eps_n}(y) - D(x,y))/\eps_n}}
        {(2\pi\eps_n)^{d/2}\sqrt{\nu(y)\mu(x)}}\,\diff\nu_n(y), \\
        \nonumber
0 &= \bar g_{\eps_n}(y) + \eps_n \log \int
   \frac{e^{(\bar f_{\eps_n}(x) - D(x,y))/\eps_n}}
        {(2\pi\eps_n)^{d/2}\sqrt{\nu(y)\mu(x)}}\,\diff\mu_n(x).
\end{align}

The first objective is to ensure that 
\[
\|\bar f_{\eps_n} -\tilde f_{\eps_n} \|_\infty/\eps_n , \|\bar g_{\eps_n} -\tilde g_{\eps_n} \|_\infty/\eps_n  \to 0, 
\]
as $\eps_n \to 0$.  Further note that since $(\bar f_{\eps_n}, \bar g_{\eps_n})$ and
$(\hat f_{\eps_n}, \hat g_{\eps_n})$ differ only by deterministic shifts
($f_0$, $g_0$, $\tfrac{\eps_n}{2}\log\mu$, $\tfrac{\eps_n}{2}\log\nu$,
$\tfrac{\eps_n d}{4}\log(2\pi\eps_n)$), the same identity holds for
$\hat f_{\eps_n} - f_{\eps_n}$ and $\hat g_{\eps_n} - g_{\eps_n}$, which is
the statement required for the linearization that we aim at invoking.

We denote the above system compactly by
\[
F_{\eps_n}\!\begin{pmatrix}\bar f_{\eps_n}\\ \bar g_{\eps_n}\end{pmatrix}
= 0.
\]

A second-order Taylor expansion of $F_{\eps_n}$ at $(\tilde f_{\eps_n},
\tilde g_{\eps_n})$ yields
\begin{align*}
0 = &F_{\eps_n} 
\begin{pmatrix}\tilde f_{\eps_n}\\ \tilde g_{\eps_n}\end{pmatrix}
+ 
	\begin{pmatrix} 
		I & K_{\eps_n,n}^{\nu_n} \\
		K_{\eps_n,n}^{\mu_n} & I 
	\end{pmatrix}
\begin{pmatrix}\bar f_{\eps_n} - \tilde f_{\eps_n}\\
                \bar g_{\eps_n} - \tilde g_{\eps_n}\end{pmatrix} 
                \\
&+ \tfrac{1}{2\eps_n}
\begin{pmatrix} 
0 &  \mathrm{Var}_{\pi_{x,\eps_n,n}}^{\nu_n}\\ 
\mathrm{Var}_{\pi_{x,\eps_n,n}}^{\mu_n} & 0
\end{pmatrix}
\begin{pmatrix}\bar f_{\eps_n} - \tilde f_{\eps_n}\\
                \bar g_{\eps_n} - \tilde g_{\eps_n}\end{pmatrix},
\end{align*}
where the operators and the variances are defined as follows.

For each fixed $x$, let $\pi_{x,\eps_n,n}^{\nu_n}$ denote the probability
measure on $y$ with density proportional to
\[
\frac{e^{(\tilde g_{\eps_n}(y) - D(x,y))/\eps_n}}
     {(2\pi\eps_n)^{d/2}\sqrt{\nu(y)\mu(x)}}
\]
with respect to $\nu_n$; this is the empirical entropic plan conditioned on
$X = x$. The off-diagonal block in the linearisation is then
\[
K_{\eps_n,n}^{\nu_n}[h](x) = \int h(y)\,\pi_{x,\eps_n,n}^{\nu_n}(\diff y),
\]
and $\mathrm{Var}^{\nu_n}$ in the quadratic correction is the variance with
respect to $\pi_{x,\eps_n,n}^{\nu_n}$. The objects $K_{\eps_n,n}^{\mu_n}$
and $\mathrm{Var}^{\mu_n}$ are defined symmetrically by exchanging the roles
of $\mu$ and $\nu$. Both $K_{\eps_n,n}^{\nu_n}$ and $K_{\eps_n,n}^{\mu_n}$
are sup-norm nonexpansive, and the constant function $1$ is an eigenfunction
with eigenvalue $1$. They are therefore invertible on the quotient of the
space of bounded functions by the constants.

We now control each of the three terms in the Taylor expansion, starting with the quadratic remainder. Note that the object of interest is $(\bar g_{\eps_n} -\tilde g_{\eps_n})/\eps_n$ (resp. with $f$) explaining that we aim at controlling 
\[
\tfrac{1}{2\eps_n^2}   \mathrm{Var}_{\pi_{x,\eps_n,n}^{\nu_n}}\!\bigl(\bar g_{\eps_n} - \tilde g_{\eps_n}\bigr) = K_{\eps_n,n}^{\nu_n} \left[\left( (\id -K_{\eps_n,n}^{\nu_n} )\left[ (\bar g_{\eps_n} -\tilde g_{\eps_n})/\eps_n   \right]\right)^2\right]
\]

Remark that $K_{\eps_n,n}^{\nu_n}$ is a smooth approximation of identity and that the sequence of functions $(\bar g_{\eps_n} -\tilde g_{\eps_n})/\eps_n$ is smooth. Indeed, recall the definition of each of them, equation~\eqref{eq: defBar}. Thus,  $(\id -K_{\eps_n,n}^{\nu_n} )\left[ (\bar g_{\eps_n} -\tilde g_{\eps_n})/\eps_n \right]$  converges to zero uniformly.

To isolate the fluctuations from the term
\[
\begin{pmatrix} I & K_{\eps_n,n}^{\nu_n}\\ K_{\eps_n,n}^{\mu_n} & I\end{pmatrix}\!
\begin{pmatrix}\bar f_{\eps_n} - \tilde f_{\eps_n}\\
                \bar g_{\eps_n} - \tilde g_{\eps_n}\end{pmatrix}
\]
we apply the block inversion formula.
Note that since $K_{\eps_n,n}^{\nu_n}$ and $K_{\eps_n,n}^{\mu_n}$ are sup-norm
contractions on the quotient by the constants, the Neumann series
$\sum_{\kappa\ge 0}(K_{\eps_n,n}^{\nu_n}K_{\eps_n,n}^{\mu_n})^\kappa$ and
$\sum_{\kappa\ge 0}(K_{\eps_n,n}^{\mu_n}K_{\eps_n,n}^{\nu_n})^\kappa$
converge there in operator norm, and the block-inversion formula is valid. After using the push-though formula, the inverse reads
\[
 R_{\eps_n,n} :=
\begin{pmatrix}(I - K_{\eps_n,n}^\nu K_{\eps_n,n}^\mu)^{-1}
               & -K_{\eps_n,n}^\nu (I - K_{\eps_n,n}^\mu K_{\eps_n,n}^\nu)^{-1}\\[2pt]
               -(I - K_{\eps_n,n}^\mu K_{\eps_n,n}^\nu)^{-1} K_{\eps_n}^\mu
               & (I - K_{\eps_n,n}^\mu K_{\eps_n,n}^\nu)^{-1}\end{pmatrix}.
\]
Define similarly its population counterpart  $R_{\eps_n}$. From the algebraic identity $A^{-1} - B^{-1} = A^{-1}(B  - A)B^{-1} $, we get the decomposition
 \begin{align*}
 &\eps_n^{-1} R_{\eps_n, n} F_{\eps_n}(\tilde f_{\eps_n}, \tilde g_{\eps_n}) \\
 &= \eps_n^{-1} R_{\eps_n} F_{\eps_n}(\tilde f_{\eps_n}, \tilde g_{\eps_n})\\
 & +\eps_n^{-1}R_{\eps_n, n} \begin{pmatrix}  0 & K_{\eps_n}^{\nu}-K_{\eps_n,n}^{\nu_n}\\ K_{\eps_n}^{\mu}-K_{\eps_n,n}^{\mu_n} & 0 \end{pmatrix}
 R_{\eps_n} F_{\eps_n}(\tilde f_{\eps_n}, \tilde g_{\eps_n}).
 \end{align*}

One can rewrite 
\begin{align*}
\eps_n^{-1} F_{\eps_n}(\tilde f_{\eps_n}, \tilde g_{\eps_n})  = \log  \begin{pmatrix} \int \pi_{\eps_n}(x,y) \diff \nu_n(y) \\  \int \pi_{\eps_n}(x,y) \diff \mu_n(x) 
\end{pmatrix},
\end{align*}
where the log in understood entrywise. We know by the CLT that for fixed epsilon, the quantity inside the logarithm converges to one. 
At the first order, for $\eps_n$ decaying not too fast\footnote{At this stage, we only know that $\eps$ constant works out.}, 
\[
\log  \begin{pmatrix} \int \pi_{\eps_n}(x,y) \diff \nu_n(y) \\  \int \pi_{\eps_n}(x,y) \diff \mu_n(x) 
\end{pmatrix} \asymp \frac{1}{\sqrt{n}}\begin{pmatrix} \int \pi_{\eps_n}(x,y) \diff \mathbb{G}_n^\nu(y) \\  \int \pi_{\eps_n}(x,y) \mathbb{G}_n^\mu(x) 
\end{pmatrix}
\]
and we naturally will have to take $\eps_n$ in the appropriate regime. 
Thus, 
\begin{align*}
&\eps_n^{-1} R_{\eps_n} F_{\eps_n}(\tilde f_{\eps_n}, \tilde g_{\eps_n}) \\
&\asymp \frac{1}{\sqrt{n}} \begin{pmatrix}(I - K_{\eps_n}^\nu K_{\eps_n}^\mu)^{-1}
               & -K_{\eps_n}^\nu (I - K_{\eps_n}^\mu K_{\eps_n}^\nu)^{-1}\\[2pt]
               -(I - K_{\eps_n}^\mu K_{\eps_n}^\nu)^{-1} K_{\eps_n}^\mu
               & (I - K_{\eps_n}^\mu K_{\eps_n}^\nu)^{-1}\end{pmatrix}\begin{pmatrix} \int \pi_{\eps_n}(x,y) \diff \mathbb{G}_n^\nu(y) \\  \int \pi_{\eps_n}(x,y) \mathbb{G}_n^\mu(x) 
\end{pmatrix}
\end{align*}

Let us solely focus on the term
\[
\frac1{\sqrt{n}}(I - K_{\eps_n}^\nu K_{\eps_n}^\mu)^{-1} \int \pi_{\eps_n}(x,y) \diff \mathbb{G}_n^\nu(y).
\]
By Proposition~\ref{prop: SinkFluctKer} (which is proved by pure analytical means and does not build upon the result we are currently proving), 
\begin{align*}
&\frac1{\sqrt{n}}(I - K_{\eps_n}^\nu K_{\eps_n}^\mu)^{-1} \int \pi_{\eps_n}(x,y) \diff \mathbb{G}_n^\nu(y) \\
&\asymp \frac1{\sqrt{n}} \int  \frac{\mathcal{F}^{-1}\left[ \tfrac{
\mathcal{F}\left[\exp\left(-\frac1{\eps_n} D(x,\cdot) \right)\right](\xi)
}{4 \pi^2 \xi^\top [\nabla^2 \phi(x) ]\xi }
\right](y)}{(2\pi\eps_n)^{d/2} \eps_n \nu^{1/2}(y)\mu^{1/2}(x)  } \diff \mathbb{G}_n^\nu(y) 
\end{align*}

From there, a classical empirical process argument, noticing that 
\[
\mathcal{F}\left[\exp\left(-\frac1{\eps_n} D(x,\cdot) \right)\right](\xi) \asymp e^{ - \eps_n \xi ^\top \nabla^2 \phi^*  \xi }
\]

The change of variable $\sqrt{\eps_n} \xi  = \zeta$ in the Fourier space coupled with a classical empirical process argument, then shows that the choice of the bandwidth $n \eps_n ^{d/2}/\log(n) \to \infty$ suffices for  $\eps_n^{-1} R_{\eps_n} F_{\eps_n}(\tilde f_{\eps_n}, \tilde g_{\eps_n})$ to converge to zero uniformly.

WIth these first insights, let us now turn to
\[
\eps_n^{-1} \begin{pmatrix}  0 & K_{\eps_n}^{\nu_n}-K_{\eps_n,n}^{\nu_n}\\ K_{\eps_n}^{\mu_n}-K_{\eps_n,n}^{\mu_n} & 0 \end{pmatrix}
 R_{\eps_n} F_{\eps_n}(\tilde f_{\eps_n}, \tilde g_{\eps_n}).
\]

A quick matrix algebra computation shows that each relevant term will now have a form similar to\footnote{In certain terms, an addition $K$ operator appears. As it is smoothing, it doesn't change the picture below. } 
\[
(K_{\eps_n}^{\nu_n}-K_{\eps_n,n}^{\nu_n} )(I - K_{\eps_n}^\mu K_{\eps_n}^\nu)^{-1}  \log \int \pi_{\eps_n}(x,y) \diff \mu_n(x). 
\]
For the same reason as above, we linearize first the logarithm and shall show afterwards, that we picked the right regime. 
A direct computation then yields, 
\begin{align*}
(K_{\eps_n}^{\nu_n}-K_{\eps_n,n}^{\nu_n} )(I - K_{\eps_n}^\mu K_{\eps_n}^\nu)^{-1}  \log \int \pi_{\eps_n}(x,y) \diff \mu_n(x) \\
\asymp \frac1{n } \int \pi_{\eps_n}  \int  (I - K_{\eps_n}^\mu K_{\eps_n}^\nu)^{-1} \pi_{\eps_n} \diff \mathbb{G}_n^\mu\diff \mathbb{G}_n^\nu.
\end{align*}
This U-statistic is well behaved and admits infinitely many derivatives for each fixed $\eps_n$. Even though $R_{\eps_n, n}\eps_n$ behaves asymptotically  like\footnote{The \emph{like} in the sentence above is important, it isn't the inverse of a differential operator and there are thus no boundary conditions.} the inverse a second order differential operator, our choice of bandwitdh guarantees that the $U$-statistic multiplied by $\eps_n^{1}$ still converges to 0; yielding the sought claim.

\paragraph{Linearization by hand}

With the previous results at hand, the linearization is carried out exactly as in the proof of Theorem~\ref{thm: multivariate_limit}, up to replacing $f$ by $g$ where necessary.
For the fluctuations of $\hat f_\eps$, we get 
\[
a_n ( \hat f_{\eps_n}(x) -f_{\eps_n}(x)) + a_n K_{\eps_n}^\nu ( \hat g_{\eps_n}(x) -g_{\eps_n}(x)) =   \frac{a_n\eps_n}{\sqrt{n}}  
 \int \pi_{\eps_n}(x, \cdot) \diff \mathbb{G}_n^\nu + \oh_p(1).
\]
The other system comes naturally by swapping $(f,\mu)$ with $(g, \nu)$.

\paragraph{Final inversion}
By symmetry,  
\begin{equation}
\label{eq: FirstBlock}
\begin{pmatrix}
\id & K_{\eps_n}^{{\nu}} \\
 K_{\eps_n}^{{\mu}}& \id  \\
\end{pmatrix} 
\begin{pmatrix}
\hat f_{\eps_n} -f_{\eps_n}\\
\hat g_{\eps_n} -g_{\eps_n}
\end{pmatrix}
= - \frac{\eps_n}{\sqrt{n}} 
\begin{pmatrix}
 \int \pi_{\eps_n}(\cdot,y) \diff \mathbb{G}_n^{{\nu}}(y) \\
 \int \pi_{\eps_n}(x,\cdot) \diff \mathbb{G}_n^{{\mu}}(x) 
\end{pmatrix}
+ \oh_p(a_n^{-1}). 
\end{equation}

Using the formula for inversion of block operators 
\[
\begin{pmatrix}
\id & K_{\eps_n}^{{\nu}} \\
 K_{\eps_n}^{{\mu}}& \id  
\end{pmatrix} ^{-1} = 
\begin{pmatrix}
\id + K_{\eps_n}^{{\nu}} ( \id -  K_{\eps_n}^{{\mu}} K_{\eps_n}^{{\nu}})^{-1} K_{\eps_n}^{{\mu}}
& -K_{\eps_n}^{{\nu}} ( \id -  K_{\eps_n}^{{\mu}} K_{\eps_n}^{{\nu}})^{-1} \\
-( \id -  K_{\eps_n}^{{\mu}} K_{\eps_n}^{{\nu}})^{-1} K_{\eps_n}^{{\mu}} 
&  ( \id -  K_{\eps_n}^{{\mu}} K_{\eps_n}^{{\nu}})^{-1} 
\end{pmatrix} .
\]
We recall that the inverse operators appearing on the right hand side above exist for each $\eps>0$ as stated in 
\cite{carlier2020differential,gonzalez2022weak,gonzalez2023weak}.
However, note that the constant function 1 is an eigenfunction  with eigenvalue 1 of both $K_{\eps_n}^{\nu}$ and $K_{\eps_n}^{\mu}$, as 
\[
\int \pi_{\eps_n}(\cdot,y) \diff {\nu(y)}=1= \int \pi_{\eps_n}(x,\cdot) \diff {\mu(x)}\,.
\]
Thus, invertibility only holds for appropriate, but non constant functions, which is the case for the quantities on the right hand side of Equation~\eqref{eq: FirstBlock}.

For each $\eps_n>0$, we have a  Neumann series expansion of the inverse on the set of non-constant functions. It indeed holds
\[( \id -  K_{\eps_n}^{{\mu}} K_{\eps_n}^{{\nu}})^{-1} \\
= \sum_{\kappa=0}^\infty \big(K_{\eps_n}^{{\mu}} K_{\eps_n}^{{\nu}}\big)^\kappa.
\]
In the view of this, we get the simplification
\[
\begin{pmatrix}
\id & K_{\eps_n}^{{\nu}} \\
 K_{\eps_n}^{{\mu}}& \id  
\end{pmatrix} ^{-1} = 
\begin{pmatrix}
 ( \id -  K_{\eps_n}^{{\nu}} K_{\eps_n}^{{\mu}})^{-1} 
& -K_{\eps_n}^{{\nu}} ( \id -  K_{\eps_n}^{{\mu}} K_{\eps_n}^{{\nu}})^{-1} \\
-( \id -  K_{\eps_n}^{{\mu}} K_{\eps_n}^{{\nu}})^{-1} K_{\eps_n}^{{\mu}} 
&  ( \id -  K_{\eps_n}^{{\mu}} K_{\eps_n}^{{\nu}})^{-1} 
\end{pmatrix}. \qedhere
\]
\end{proof}


\subsection{Proof of Proposition~\ref{prop: SinkFluctKer}}
\label{sec: SinkFluctKer}

\begin{proof}[Proof of Proposition~\ref{prop: SinkFluctKer}]
The problem is understanding
\begin{align*}
&y \mapsto( \id -  K_{\eps_n}^{{\nu}} K_{\eps_n}^{{\mu}})^{-1}  [\pi_{\eps_n}(x,y)] \\
&\qquad\qquad\qquad =y \mapsto ( \id -  K_{\eps_n}^{{\nu}} K_{\eps_n}^{{\mu}})^{-1} \left[\frac
{
\exp\left(-\frac1{\eps_n} D(x,y) + \oh(1) \right)
}{(2\pi\eps_n)^{d/2} \nu^{1/2}(y)\mu^{1/2}(x)}  \right].
\end{align*}
Recall the understanding of  $K_{\eps_n}^{{\nu}} K_{\eps_n}^{{\mu}}$ provided by Theorem~\ref{thm: LimOp}. The correct order of the spectral gap is $\Oh(\eps_n)$ so that $\eps_n ( \id -  K_{\eps_n}^{{\nu}} K_{\eps_n}^{{\mu}})^{-1}$ is a bounded operator for $\eps_n < \eps_0$, for some $\eps_0>0.$
The quantity 
\[
( \id -  K_{\eps_n}^{{\nu}} K_{\eps_n}^{{\mu}})^{-1}  [\pi_{\eps_n}(x,y)]
\]
is actually equivalent to finding $y\mapsto h_x(y)$ such that 
\[
 h_x(y) -  K_{\eps_n}^{{\nu}} K_{\eps_n}^{{\mu}}\big[h_x(\cdot)\big](y)  = \left[\frac{ \exp\left(-\frac1{\eps_n} D(x,y) + \oh(1) \right)}{(2\pi\eps_n)^{d/2} \nu^{1/2}(y)\mu^{1/2}(x)} \right].
\]
Finally, set 
$$
h_x(y) =\frac{ \tilde h_x(y)   }{ \nu^{1/2}(y) \mu^{1/2}(x)}.
$$

Then, 
\begin{align*}
&\mathcal{F} \left[K_{\eps_n}^{{\nu}} K_{\eps_n}^{{\mu}}[\tilde h_x]\right](\xi) \\
&\asymp   \int 
 \tilde h_x(y)  e ^{-i2\pi \langle \xi, y \rangle} \Bigg[\int \frac{1 }{(2\pi)^{d/2}}  \sqrt{
 \frac{
    \det[ \nabla^2 \phi_0^{*}(y+\sqrt{\eps_n}u)] \det[ \nabla^2 \phi_0^{*}(y)]} 
    { \det[ \nabla^2 \phi_0^{*}(y+\sqrt{\eps_n}u)+ \nabla^2 \phi_0^{*}(y)]}
    }
 \\
&\quad \quad \times \exp \left(-  \frac1{4} 
u^\top \nabla^2 \phi_0^{*}(y+\sqrt{\eps_n}u) u \right) 
  e ^{-i2\pi \langle \sqrt{\eps_n} \xi, u \rangle} \diff u \Bigg] \diff y\\
  &\asymp   \int 
 \tilde h_x(y)  e ^{-i2\pi \langle \xi, y \rangle} \Bigg[\int \frac{1 }{(4\pi)^{d/2}}  \sqrt{
    \det[ \nabla^2 \phi_0^{*}(y)] 
    }
 \\
&\quad \quad \times \exp \left(-  \frac1{4} 
u^\top \nabla^2 \phi_0^{*}(y) u \right) 
  e ^{-i2\pi \langle \sqrt{\eps_n} \xi, u \rangle} \diff u \Bigg] \diff y \\
  &\asymp   \int 
 \tilde h_x(y)  e ^{-i2\pi \langle \xi, y \rangle} e^{ -4 \pi^2 \eps_n\ \xi^\top [\nabla^2 \phi^*(y)]^{-1}\xi } \diff y.
\end{align*}

All in all, 
\begin{align*}
&\mathcal{F}[\tilde h_x](\xi)- \mathcal{F} \left[K_{\eps_n}^{{\nu}} K_{\eps_n}^{{\mu}}[\tilde h_x]\right](\xi)
\\
&   \asymp \int  \tilde h_x(y)  e ^{-i2\pi \langle \xi, y \rangle} \Bigg[1- e^{ -4 \pi^2 \eps\ \xi^\top [\nabla^2 \phi^*(y)]^{-1}\xi } \Bigg] \diff y   \\
  &   \asymp \int  \tilde h_x(y)  e ^{-i2\pi \langle \xi, y \rangle}  4 \pi^2 \eps_n\ \xi^\top [\nabla^2 \phi^*(y)]^{-1}\xi  \diff y 
\end{align*}

Note that the right hand side of the integral equation containing $e^{-D(x,y)/\eps}$ forces $x^*$ to be close\footnote{Recall that the divergence compares $x$ and $y$ after transporting them to the same domain via the transport map.} to $y$.
Finally, as $\nabla^2 \phi^*(y)$ is smooth because of our assumption, we can use a method similar to that of frozen coefficients in partial differential equation theory to write 
\begin{align*}
&\mathcal{F}[\tilde h_x](\xi)- \mathcal{F} \left[K_{\eps_n}^{{\nu}} K_{\eps_n}^{{\mu}}[\tilde h_x]\right](\xi)
\\
& \asymp \int  \tilde h_x(y)  e ^{-i2\pi \langle \xi, y \rangle}  4 \pi^2 \eps_n\ \xi^\top [\nabla^2 \phi^*(x^*)]^{-1}\xi  \diff y \\
&+ 
 \int  \tilde h_x(y)  e ^{-i2\pi \langle \xi, y \rangle}  4 \pi^2 \eps_n\ \xi^\top\big [   [\nabla^2 \phi^*(y)]^{-1}-[\nabla^2 \phi^*(x^*)]^{-1}\big] \xi
\end{align*} 
It remains to use the dual flatness property to get to the claim.
\end{proof}


\subsection{Proof of Propositions~\ref{prop: PopBias} and~\ref{prop: StatFluct}}
\label{sec: ProofScoreL2}

\begin{proof}[Proof of Proposition~\ref{prop: PopBias}]
From Proposition~\ref{prop:secOrdExp}, the following holds after a simple rearrangement
\begin{align*}
 &   -2f_\eps(x)/\eps
    - \log\mu(x)\\
    & = \frac{d}{2}\log(2\pi\eps) + \eps \operatorname{tr} \left( \frac{\nabla^2\mu(x)}{4\mu(x)} - \frac{1}8\big(\nabla \log \mu(x) \big)\big(\nabla \log \mu(x) \big)^\top \right)+ \oh(\eps), 
\end{align*}
Then, remark that 
\begin{align*}
\frac{\nabla^2\mu(x)}{\mu(x)} &= \exp (V)\nabla^2 (\exp (-V))
=  - \nabla^2 V +  (\nabla V) (\nabla V)^\top.
\end{align*}
Putting everything together, we have
\begin{align*}
-2f_\eps(x)/\eps - \log\mu(x) = \frac{d}{2}\log(2\pi\eps) +\frac{\eps}{4}\bigl(-\Delta V(x) + \frac{1}{2}\|\nabla V(x)\|^2\bigr)    + \oh(\eps)\,.
\end{align*}
Taking the gradient on both sides in the display above yields
\begin{align*}
    s_\eps(x) - \nabla \log\mu(x) &=  \frac{\eps}{4} (-\nabla\Delta V(x)  + \nabla^2 V(x)\nabla V(x)) + \oh(\eps)\,,
\end{align*}
which follows from the chain rule. Thus, for $\eps$ small enough, we arrive at
\begin{multline*}
    \|s_\eps- \nabla \log\mu\|^2_{L^2(\mu)} \\
    \lesssim \eps^2 \Bigl( \int \|\nabla \Delta V(x)\|^2 \dd \mu(x) + \int \| \nabla^2 V(x)\nabla \log \mu(x)\|^2 \dd \mu(x)\Bigr)\,.
\end{multline*}
The claim then follows from Assumption~\ref{ass: C}. 
\end{proof}

\begin{proof}[Proof of Proposition~\ref{prop: StatFluct}]
By Proposition~\ref{prop: fluctScore}, we have that
\begin{align*}
\E \| \hat s_{\eps_n} - s_{\eps_n} \|_{L^2(\mu)}^2 &\lesssim \E \int  \left\rVert  \frac1{\eps_n\sqrt{n}} \int y \frac{ \exp \left( - \tfrac1{2\eps_n}\|y -x  \|^2 \right)}{(2\pi\eps_n)^{d/2}
\mu^{1/2}(y)(1+\oh(1))
} \diff \mathbb{G}_n(y)\right\rVert^2\! \dd x \\
&+ \E \left\rVert K_{\eps_n} [x]  \eps_n^{-2}\left(
  \hat f_{\eps_n}(x) -
 f_{\eps_n}(x)
\right)
\right\rVert_{L^2(\mu)}^2 \\
&+ \E \left\rVert K_{\eps_n} \left[x  \eps_n^{-2}\left(
  \hat f_{\eps_n}(x) -
 f_{\eps_n}(x)
\right) \right]
\right\rVert_{L^2(\mu)}^2, 
\end{align*}
using the basic inequality $(a+b+c)^2 \le 3 (a^2 + b^2 +c^2)$.
Let us focus the first term.
Rewriting, 
we have 
\begin{multline*}
I_1 := n^{-1}\eps_n^{-d-2}\\
\times \int \left\| \int y \exp\left(-\tfrac{1}{2\eps_n} \|x-y \|^2 + \tfrac12 V(x) + \tfrac12 V(y)  \right) (1+ \oh(1)) \diff \mathbb{G}_n(y)\right\|^2 \diff \mu(x) .
\end{multline*}
One can thus fix an arbitrary compact $ K\ni 0$ containing the product of the supports and get that 
\begin{multline*}
I_1 \asymp n^{-1}\eps_n^{-d-2}  \\
\int \left\| \int  \1_{(x,y) \in K} y \exp\left(-\tfrac{1}{2\eps_n} \|x-y \|^2 + \tfrac12 V(x) + \tfrac12 V(y)  \right) (1+ \oh(1)) \diff \mathbb{G}_n(y)\right\|^2 \\ \mu(x) \diff x.
\end{multline*}
On that compact $K$, 
\[
 y\exp\left(-\tfrac{1}{2\eps_n} \|x-y \|^2 + \tfrac12 V(x) + \tfrac12 V(y)  \right) \to \frac{x}{\mu(x)},
\]
uniformly in $x,y$ as $\eps_n \to 0$.
Thus, 
\[
\int \left\| \int y \exp\left(-\tfrac{1}{2\eps_n} \|x-y \|^2 + \tfrac12 V(x) + \tfrac12 V(y)  \right) \diff \mathbb{G}_n(y)\right\|^2 \mu(x) \diff x = \Oh_p(1)  .
\]

Recalling from the proof of Theorem~\ref{thm: multivariate_limit} that
\[
\eps_n^{-1}\big( \hat{f}_{\eps_n} (x) -  {f}_{\eps_n} (x)\big) \asymp \frac{1}{2\sqrt{n}}  \left[\int \frac{ \exp\left(   - \tfrac1{2\eps_n}\|y -x  \|^2\right)}{{
(2\pi \eps_n )^{d/2}\mu^{1/2}(y)\mu^{1/2}(x)(1+\oh(1))
}
} \diff \mathbb{G}_n(y)\right],
\]
we have 
\begin{multline*}
 \left\rVert K_{\eps_n} [x]  \eps_n^{-2}\left(
  \hat f_{\eps_n}(x) -
 f_{\eps_n}(x)
\right)
\right\rVert_{L^2(\mu)}^2 \\\asymp n^{-1} \eps_n^{-d-2} \int 
\|x\|^2 \left(\int
\frac{ \exp\left(   - \tfrac1{2\eps_n}\|y -x  \|^2\right)}{
\mu^{1/2}(y)\mu^{1/2}(x)
} \diff \mathbb{G}_n(y)\right)^2
\mu(x)\diff x,
\end{multline*}
and similarly for the third term. Both terms can be treated in the exact same way as above.
\end{proof}

\section{Proof of Theorem~11}
\label{sec: multivariate_limit}

We start with a first lemma which is going to be fundamental to control higher order terms in the linearization.

\begin{lem} Under Assumptions~\ref{assum: Main}, assuming that $ n^{-2/d}\ll \eps_n$, when $n \to \infty$  it holds that
\label{lem: UniformConv}
\[
\left \| \frac{f_{\eps_n} - \hat f_{\eps_n} }{\eps_n}\right\|_\infty \to 0.
\]
\end{lem}
\begin{proof}

We start by remarking that $-2f_{\eps_n}/\eps_n - d \log \big( 2\pi \eps \big)/2 $ is bounded above and below for $\eps$ small enough.
Let $\omega_{\eps, K}(x)$  be such that $\sqrt{\omega_{\eps, K}(x)}  = \int_K k_\eps(x,y) \omega_{\eps, K}^{-1/2}(y)\diff y$.  
Around the boundary, the factors $\omega_{\eps, K}(x)$ make up for (part of) the mass lost by the fact that the kernel crosses the boundary. 

Recall the structure of dampened Sinkhorn updates in the population self-transport case. 
Initializing at  $ -\frac{\eps}2 \log \big( \mu (2 \pi \eps)^{d/2} \omega_{\eps, K}(x) \big) $ the next iterate reads 
\begin{align*}
 \frac{\eps}4  \log \big( \mu(x)  \omega_{\eps, K}(x)  \big)  + \frac{\eps}2 \log  \int k_\eps(x,y) \frac{\sqrt{\mu(y) }}{\sqrt{ \omega_{\eps, K}(y) }} \diff y \\
 =  \eps \log\Big( 1+ \Oh(\sqrt{\eps})\Big),
\end{align*}
where the equality follows from a careful application of the Laplace method including the boundary issues.

By a maximum principle argument, the function $\sqrt{\omega_{\eps, K}(x)}$ is bounded above and below.

Recall the fact that the empirical potential satisfies the fixed point equation
\[
\frac{\hat f_{\eps_n} (x)}{\eps_n} = - \log \frac1n\sum_{i=1}^n \frac{\exp(-\frac{\|x- X_i\|^2}{2\eps_n})}{(2\pi\eps_n)^{d/2}} \exp\left(\frac{\hat f_{\eps_n}(X_i)}{\eps_n}\right).
\]
As previously, we will linearize this system. 
Let 
\[
F_{\eps,n} ( h) := h + \eps \log \int k_{\eps}(x,y) e^{h(y)/\eps} \diff \mu_n(y). 
\]
and observe that this functional is convex.
A Taylor expansion yields, 
\[
0=F_{\eps,n} ( \hat f_\eps  ) = F_{\eps,n} ( f_{\eps_n} ) + \langle D F_{\eps,n} \vert_{ f_{\eps_n}} , \hat f_\eps -  f_{\eps_n} \rangle - \frac1{\eps} Var_{\bar f}( h ) ,
\]
 where $Var_{\bar f}$ means that the variance is taken with respect to a measure $\propto e^{\bar f} \mu(x)$ and $\bar f $ is an interpolant between  $\hat f_\eps$  and $ f_{\eps_n}$. $\langle , \rangle $ denotes the duality pairing between measures and continuous functions on the compact $K$.

A simple computation yields
\begin{align*}
 \langle D F_{\eps,n} \vert_{f_0}, h \rangle  
 &= \id + \frac{\int k_{\eps}(x,y) e^{f_0(y)/\eps} h(y) \diff \mu_n(y) }{\int k_{\eps}(x,y) e^{f_0(y)/\eps} \diff \mu_n(y)}
 \end{align*}

The general proof idea is to show that $F_{\eps,n} ( f_{\eps_n} )$ is small  and invert $D F_{\eps,n} \vert_{f_\eps}$ as one would classically do in Z-estimation.

Set 
\begin{equation}
\label{eq: operator}
D F_{\eps,n} \vert_{f_\eps} := \id + K_{\eps_n,n}.
\end{equation}

We start with $F_{\eps,n} ( f_{\eps_n} )$.  First, observe that 
\[
\E \left[\frac1n\sum_{i=1}^n \frac{\exp(-\frac{\|x- X_i\|^2}{\eps_n})}{(2\pi\eps_n)^{d/2}} \exp\left(\frac{ f_{\eps_n}(X_i)}{\eps_n}\right)  \right] = e^{-f_{\eps_n}(x)/\eps_n},
\]
which shows that there is no bias.
Now, the empirical fluctuations must be controlled, we need that
\[
x \mapsto \frac1n\sum_{i=1}^n \frac{\exp(-\frac{\|x- X_i\|^2}{\eps_n})}{(2\pi\eps_n)^{d/2} } \exp\left(\frac{ f_{\eps_n}(X_i)}{\eps_n}\right)
\]
concentrates around its expectation in the $L^\infty$ norm.
The classical proof of convergence of kernel density estimators in the uniform norm relies on the fact that the class of functions
\[
\mathcal{F}_0:=\left \{ x\mapsto \exp\left(-\frac{\|x- y\|^2}{h}\right); h >0, y \in \reals^d \right\}
\]
is a VC class. 
Therefore, it is clear that for any measure $Q$, and $h,h' \in \mathcal{F}_0$, 
\[
\E_{Y \sim Q} \left[ \left( h(Y)e^{\frac{ f_{\eps_n}(Y)}{\eps_n}}  - h'(Y)e^{\frac{ f_{\eps_n}(Y)}{\eps_n}} \right)^2\right] \le \ell^{-1}\ \E_{Y \sim Q} \left[ \left({f(Y)} - {f'(Y)} \right)^2\right], 
\]
as we have just shown that $f_{\eps_n}(Y)/\eps_n$ is upper and lower bounded. 
Because of this, we can rely on the same arguments as in \citet{einmahl2005uniform} to conclude 
that 
\[
\left \| \frac1n\sum_{i=1}^n \frac{\exp(-\frac{\|x- X_i\|^2}{\eps_n})}{(2\pi\eps_n)^{d/2}} \exp\left(\frac{ f_{\eps_n}(X_i)}{\eps_n}\right) -e^{-f_{\eps_n}(x)/\eps_n}\right\|_\infty \to 0 \quad a.s.,
\]
as $n\to \infty,  \eps_n \to 0$ and $\sqrt{n}\eps_n^{d/4}/\sqrt{\log n} \to \infty$.

This yields that 
\[
\frac{F_{\eps,n} ( f_{\eps_n} )}{\eps_n} = \oh(1)
\]

\medskip

Denote by $\bar K_n$ the same operator as in equation~\ref{eq: operator} with $\bar f$ (the interpolant) in place of $f_\eps$

We get after a simple algebraic reorganization
\begin{align}
\nonumber
\label{eq: NormSup}
&\left\| \frac{\hat f_\eps-f_\eps }{\eps}  \right\|_\infty =\\
 &\Bigg\| (\id + K_{\eps_n, n})^{-1}\left[\frac{F_{\eps,n} ( f_{\eps_n} )}{\eps_n}\right] - (\id + K_{\eps_n, n})^{-1} \bar K_n \left[  \left( (\id - \bar K_n)\left[ \tfrac{\hat f_\eps-f_\eps }{\eps} \right]  \right)^2\right]  \Bigg\|_\infty
\end{align}

Notice that $\bar K_n$ is a smooth approximation of identity which incorporates the knowledge of the boundaries. 
Note that using their definition,$\tfrac{\hat f_\eps-f_\eps }{\eps}$ is $\cC^\infty$ for each epsilon. 
For the record, let us write 
\[
\frac{f_\eps }{\eps} = \frac1\eps \frac{\int e^{f_\eps/\eps(y)} k_\eps(x, y)(x-y) \diff \mu(y) }{\int e^{f_\eps/\eps} k_\eps(x, \cdot) \diff \mu }.
\] 
The crudest estimation of the order of the derivative is $1/\sqrt{\eps}$  and similarly for the empirical version. Note however that the tilting combined with the $L^2$ expansion we have shown, improves on this rate.
At the above scale, $(\id - \bar K_n)$ converges to a first-order differential operator at best. Altogether, with the large bandwidth chosen,
\[
(\id - \bar K_n)\left[ \tfrac{\hat f_\eps-f_\eps }{\eps} \right] =\oh_p(1).
\]

The operator $ \bar K_n$ thus smoothes a sequences of functions converging to zero uniformly and the whole second summand in the norm on the r.h.s. of equation~\eqref{eq: NormSup} converges to zero uniformly.

Note that 
\[
(\id + K_{\eps_n, n})^{-1} = \frac{\id}{2} -  \frac{K_{\eps_n, n}- \id }{2} + \cdots.
\]

Note also that
\begin{align*}
&\nabla \frac{F_{\eps,n} ( f_{\eps_n} )}{\eps_n} \\
&=
 \frac{\int e^{f_\eps(x)/\eps } k_\eps(x,y) (x- y) \diff\mu(y) }{\eps \int e^{f_\eps(x)/\eps } k_\eps(x,y)  \diff\mu(y)}
 - \frac{\int e^{f_\eps(x)/\eps } k_\eps(x,y) (x- y) \diff\mu_n(y) }{ \eps \int e^{f_\eps(x)/\eps } k_\eps(x,y)  \diff\mu_n(y)}
\end{align*}
A polynomial is again a VC class, and the product of that VC class with that of the Gaussian distribution used above is again VC, so that a Talagrand type inequality ensures that 
\[
\left\| \nabla \frac{F_{\eps,n} ( f_{\eps_n} )}{\eps_n} \right \|_\infty= \Oh_p\big( \sqrt{\frac{ \log n }{ n \eps}}\big).
\]
Therefore, the sequence  $F_{\eps,n} ( f_{\eps_n} )/\eps_n$ remains locally smooth and successive iterations of  $K_{\eps_n, n}- \id $ won't alter the convergence to 0.
\end{proof}

\begin{proof}[Proof of Theorem~\ref{thm: multivariate_limit}]

The proof consists of two main steps: Linearization and a central limit theorem.

\textbf{Linearization.}
To simplify the notation, we replace $f_{\eps_n}$ by $f$ below.
We start by re-writing the fluctuations using the fixed point equation characterization as
\begin{align*}
&a_n ( \hat f(x) -f(x)) \\
&=
 a_n\eps_n \left( 
 \log \int \exp\left( \frac1{\eps_n  } f \right) k_{\eps_n}(x, \cdot) \diff \mu - 
  \log \int \exp\left( \frac1{\eps_n } \hat f\right) k_{\eps_n}(x, \cdot) \diff  \mu_n
\right)\\
&= a_n\eps_n \log\left( 
  \int \exp\left( \frac1{\eps_n  } f \right) k_{\eps_n}(x, \cdot) \diff \mu \Big/ \int \exp\left( \frac1{\eps_n  } f \right) k_{\eps_n}(x, \cdot) \diff \mu_n
\right)  \\
&+ a_n\eps_n \log\left( 
  \int \exp\left( \frac1{\eps_n  }  f \right) k_{\eps_n}(x, \cdot) \diff \mu_n \Big/ \int \exp\left( \frac1{\eps_n  }  \hat f \right) k_{\eps_n}(x, \cdot) \diff  \mu_n \right) \\
&=: C(x) + D(x)\,.
\end{align*} 

Then, 
\begin{align*}
C(x) &= - a_n\eps_n \log \left( 
\frac{
 \int \exp\left( \frac1{\eps_n  }  f \right) k_{\eps_n}(x, \cdot) \diff  \mu_n}
  {
   \int \exp\left( \frac1{\eps_n }  f\right) k_{\eps_n}(x, \cdot) \diff  \mu}
\right)\\
&=  - a_n\eps_n \log \Bigg( 
\frac{
  \int \exp\left( \frac1{\eps_n  }  f \right) k_{\eps_n}(x, \cdot) \diff  \mu  }   {
   \int \exp\left( \frac1{\eps_n }  f\right) k_{\eps_n}(x, \cdot) \diff  \mu} \\
   &\qquad\qquad\qquad+\frac1{\sqrt{n}}\frac{ \int \exp\left( \frac1{\eps_n  }  f \right) k_{\eps_n}(x, \cdot) \diff \big( \sqrt{n}(\mu_n- \mu)\big)}
  {
   \int \exp\left( \frac1{\eps_n }  f\right) k_{\eps_n}(x, \cdot) \diff  \mu}
\Bigg)\\
&= - \frac{a_n\eps_n}{\sqrt{n}}  
\frac{
 \int \exp\left( \frac1{\eps_n  }  f \right) k_{\eps_n}(x, \cdot) \diff \mathbb{G}_n }
  {
   \int \exp\left( \frac1{\eps_n }  f\right) k_{\eps_n}(x, \cdot) \diff  \mu} \\
& \qquad\qquad\qquad + \oh \left( - \frac{a_n\eps_n}{\sqrt{n}} \frac{
 \int \exp\left( \frac1{\eps_n  }  f \right) k_{\eps_n}(x, \cdot) \diff \mathbb{G}_n }
  {
   \int \exp\left( \frac1{\eps_n }  f\right) k_{\eps_n}(x, \cdot) \diff  \mu} \right) \\
&=- \frac{a_n\eps_n}{\sqrt{n}}  
 \int \pi_{\eps_n} \diff \mathbb{G}_n 
+ \oh \left( - \frac{a_n\eps_n}{\sqrt{n}} \frac{
 \int \exp\left( \frac1{\eps_n  }  f \right) k_{\eps_n}(x, \cdot) \diff \mathbb{G}_n }
  {
   \int \exp\left( \frac1{\eps_n }  f\right) k_{\eps_n}(x, \cdot) \diff  \mu}\right), 
\end{align*}
where $\mathbb{G}_n:= \sqrt{n}(\mu_n-\mu)$. 
Because of the development of the potentials as $\eps \to 0$, we get
\begin{align*}
 &  \frac{a_n\eps_n}{\sqrt{n}}  
 \int \pi_{\eps_n}(x, \cdot) \diff \mathbb{G}_n 
= 
\frac{a_n\eps_n}{\sqrt{n}} \int \frac{ \exp\left(\frac{-1}{2\eps_n} \|y -x  \|^2 \right)}{{
(2\pi\eps_n)^{d/2} \mu^{1/2}(y)\mu^{1/2}(x)(1+\oh(1))
}
} \diff \mathbb{G}_n(y).
\end{align*}
We now treat $D(x)$ by providing upper and lower bounds for it.

\begin{align*}
 &D(x) \\
 &= -a_n\eps_n \log \left( 
\frac{
  \int \exp\left( \frac1{\eps_n  } \hat f \right) k_{\eps_n}(x, \cdot) \diff  \mu_n}
  {
   \int \exp\left( \frac1{\eps_n }  f\right) k_{\eps_n}(x, \cdot) \diff  \mu_n}
\right)\\
&= -a_n\eps_n \log \left(  1 + 
\frac{
  \int \exp\left( \frac1{\eps_n  } \hat f \right) -  \exp\left( \frac1{\eps_n  }  f \right) k_{\eps_n}(x, \cdot) \diff  \mu_n}
  {
   \int \exp\left( \frac1{\eps_n }  f\right) k_{\eps_n}(x, \cdot) \diff  \mu_n}
\right)\\
&\geq a_n\eps_n 
\frac{
  \int \exp\left( \frac1{\eps_n  }  f \right) -  \exp\left( \frac1{\eps_n  } \hat f \right) k_{\eps_n}(x, \cdot) \diff \mu_n}
  {
   \int \exp\left( \frac1{\eps_n }  f\right) k_{\eps_n}(x, \cdot) \diff \mu_n} \qquad ( - \log (1+x) \geq -x) 
\\
&= a_n 
\frac{
  \int \exp\left( \frac1{\eps_n  } \hat  f \right) [ f-   \hat f ]\ k_{\eps_n}(x, \cdot) \diff \mu_n}
  {
   \int \exp\left( \frac1{\eps_n }  f\right) k_{\eps_n}(x, \cdot) \diff \mu_n} \qquad ( e^x -1 \geq x) 
\\
\end{align*}

At this stage note that applying the same type of  inequalities with reversed signs yield that 
\begin{equation}
\label{eq: UBDx}
D(x) \le a_n 
\frac{
  \int \exp\left( \frac1{\eps_n  }   f \right) [ f-   \hat f ]\ k_{\eps_n}(x, \cdot) \diff \mu_n}
  {
   \int \exp\left( \frac1{\eps_n }  \hat f\right) k_{\eps_n}(x, \cdot) \diff \mu_n}.
\end{equation}

Recalling that
\[
\mu_n = \mu + \frac{1}{\sqrt{n}} \sqrt{n } (\mu_n- \mu) =  \mu + \frac{1}{\sqrt{n}} \mathbb{G}_n,
\]
one gets 
\begin{align*}
 &D(x) \\
 &\geq  a_n 
\frac{
  \int \exp\left( \frac1{\eps_n  } \hat  f \right) [ f-   \hat f ]\ k_{\eps_n}(x, \cdot) \diff \mu_n}
  {
   \int \exp\left( \frac1{\eps_n }  f\right) k_{\eps_n}(x, \cdot) \diff \hat \mu + n^{-1/2}   \int \exp\left( \frac1{\eps_n }  f\right) k_{\eps_n}(x, \cdot) \diff \mathbb{G}_n } \\  
    &\geq  a_n 
\frac{
  \int \exp\left( \frac1{\eps_n  } \hat  f \right) [ f-   \hat f ]\ k_{\eps_n}(x, \cdot) \diff  \mu + n^{-1/2}   \int \exp\left( \frac1{\eps_n  } \hat  f \right) [ f-   \hat f ]\ k_{\eps_n}(x, \cdot) \diff \mathbb{G}_n}
  {
   \int \exp\left( \frac1{\eps_n }  f\right) k_{\eps_n}(x, \cdot) \diff \hat \mu + n^{-1/2}   \int \exp\left( \frac1{\eps_n }  f\right) k_{\eps_n}(x, \cdot) \diff \mathbb{G}_n } \\
    &\geq  a_n 
\frac{
  \int \exp\left( \frac1{\eps_n  }   f \right) [ f-   \hat f ]\ k_{\eps_n}(x, \cdot) \diff  \mu +  
  \int \left[   \exp\left( \frac1{\eps_n  } \hat  f \right) -   \exp\left( \frac1{\eps_n  }   f \right)\right] [ f-   \hat f ] k_{\eps_n}(x, \cdot) \diff  \mu
  }
  {
   \int \exp\left( \frac1{\eps_n }  f\right) k_{\eps_n}(x, \cdot) \diff \hat \mu + n^{-1/2}   \int \exp\left( \frac1{\eps_n }  f\right) k_{\eps_n}(x, \cdot) \diff \mathbb{G}_n } \\
&   +   a_n 
\frac{
  n^{-1/2}   \int \exp\left( \frac1{\eps_n  } \hat  f \right) [ f-   \hat f ]\ k_{\eps_n}(x, \cdot) \diff \mathbb{G}_n}
 {
   \int \exp\left( \frac1{\eps_n }  f\right) k_{\eps_n}(x, \cdot) \diff \hat \mu + n^{-1/2}   \int \exp\left( \frac1{\eps_n }  f\right) k_{\eps_n}(x, \cdot) \diff \mathbb{G}_n }
 \end{align*}
Furthermore, 
\begin{align*}
\frac{
  \int [ f-   \hat f ]\ k_{\eps_n} \pi_\eps(x, \cdot) \diff  \mu +  
  \int \left[   \exp\left( \frac1{\eps_n  } \hat  f  - f \right) -  1\right] [ f-   \hat f ] \pi_{\eps_n}(x, \cdot) \diff  \mu
  }
  {
   1 + n^{-1/2}   \int  \pi_{\eps_n}(x, \cdot) \diff \mathbb{G}_n } \\
   \geq 
\frac{
  K_\eps [ f-   \hat f ] -  \frac1{\eps_n  } 
  \int [ f-   \hat f ]^2 \pi_{\eps_n}(x, \cdot) \diff  \mu
  }
  {
   1 + n^{-1/2}   \int  \pi_{\eps_n}(x, \cdot) \diff \mathbb{G}_n } \\
\end{align*}
Let us treat the denominator in the equation \eqref{eq: UBDx}. 

\begin{align*}
&\int \exp\left( \frac1{\eps_n }  \hat f\right) k_{\eps_n}(x, \cdot) \diff \mu_n \\
&= \exp\left(-  \frac1{\eps_n }   f\right) \Big ( 1 + n^{-1/2} \int \exp\left( \frac1{\eps_n } ( \hat f - f) \right) \pi \diff \mathbb{G}_n +  \int\left( \exp\left( \frac1{\eps_n } ( \hat f - f )\right) - 1\right) \pi_\eps \diff \mu\Big) \\
& \geq \exp\left(-  \frac1{\eps_n }   f\right) 
\Big ( 1 + n^{-1/2} \int \exp\left( \frac1{\eps_n } ( \hat f - f) \right) \pi \diff \mathbb{G}_n + \eps_n^{-1} K_{\eps_n} [\hat f - f  ]\Big)
\end{align*}
Therefore, 
\begin{align*}
D(x)  \le a_n 
\frac{ K_\eps  [ f-   \hat f ] 
+  n^{-1/2} \int [ f-   \hat f ]\ \pi_{\eps_n}(x, \cdot) \diff  \mathbb{G}_n
} {  1 + n^{-1/2} \int \exp\left( \frac1{\eps_n } ( \hat f - f) \right) \pi \diff \mathbb{G}_n + \eps_n^{-1} K_{\eps_n} [\hat f - f  ]}.
\end{align*}
We now aim at showing that the leading order of the upper and the lower bounds is $a_nK_\eps  [ f-   \hat f ] $ and that all other quantities are negligible. 
The choice $n^{1/2}\eps_n^{d/4}/ \log(n) \to \infty$, guarantees that 
\[
n^{-1/2}   \int  \pi_{\eps_n}(x, \cdot) \diff \mathbb{G}_n = n^{-1/2} \eps^{-d/4}  \int  \frac{k_{\eps_n}(x, \cdot)}{(2\pi \eps_n)^{d/4} \sqrt{\mu(x) \mu(y)}+\oh(1)} \diff \mathbb{G}_n 
\]
converges to 0.
As $\eps_n^{-1} $ increases slower than $a_n$, $ \eps_n^{-1} K_{\eps_n} [\hat f - f  ]$ goes to zero.

Ultimately, we get the linearization 
\begin{equation}
\label{eq: LinOneMeas}
a_n ( \hat f(x) -f(x)) + a_n K_\eps ( \hat f(x) -f(x)) =   \frac{a_n\eps_n}{\sqrt{n}}  
 \int \pi_{\eps_n}(x, \cdot) \diff \mathbb{G}_n  + \oh_p(1).
\end{equation}

\textbf{Central limit theorem.}
With the linearization at hand, we can then finish off with the central limit theorem.

   \textit{Step 1.}
   As it shall become important in the sequel, we first show limiting distribution results for quantities of the type
\begin{equation}
\label{eq: MainObj}
\left( \int_{\reals^d} \frac{ \exp\left(   - \tfrac1{2\eps_n}\|y -x_j  \|^2\right)}{{
\mu^{1/2}(y)
}
} \diff \mathbb{G}_n(y) \right)_{j=1, \ldots, m}
\end{equation}
up to choosing an appropriate scaling. 
Then, we need to control the limiting behavior of $(\id +  K_{\eps_n})^{-1}$ when the latter acts on a particular sequence of (random) functions.\\

To obtain the convergence in distribution of \eqref{eq: MainObj} after proper rescaling, we use the Lindeberg--Feller theorem \cite[Proposition~2.27]{van2000asymptotic}.
One remarks that  
\[
\mu \left[\eps_n^{-d/4} \frac{ \exp\left(   - \tfrac1{2\eps_n}\|y -x  \|^2\right)}{
\mu^{1/2}(y)}\right] < \infty
\]
as well as
\begin{align*}
&\mu \left[\left(\eps_n^{-d/4}  \exp\left(   - \tfrac1{2\eps_n}\|y -x  \|^2\right){
\mu^{-1/2}(y)}\right)^2\right] \\
& \qquad \qquad\qquad \qquad \qquad \qquad  \qquad \qquad = \int_{\reals^d} \eps_n^{-d/2}  \exp\left(   - \tfrac1{\eps_n}\|y -x  \|^2\right) \diff y \\
&\qquad \qquad \qquad \qquad \qquad \qquad \qquad \qquad = \pi^{d/2}.
\end{align*}
Further, 
\begin{align*}
&\mu \left[\eps_n^{-d/2} \frac{ \exp\left(   - \tfrac1{2\eps_n}\|y -x_1  \|^2\right)\exp\left(   - \tfrac1{2\eps_n}\|y -x_2  \|^2\right)}{
\mu(y)}\right] \\
& \qquad =  \exp\left( -\frac1{4\eps_n} \| x_1 - x_2 \|^2\right)\int_{\reals^d} \eps_n^{-d/2}  \exp\left(   - \tfrac1{\eps_n}\|y - \tfrac{x_1+x_2}2  \|^2\right) \diff y \\
& \qquad=  \exp\left( -\frac1{4\eps_n} \| x_1 - x_2 \|^2\right)\pi^{d/2}. 
\end{align*}
It is then clear that the limiting covariance matrix (as $\eps_n \to 0$) is well behaved. 
Setting 
\[
Y_{n,i}:= \left (\eps_n^{-d/4} \frac{ \exp\left(   - \tfrac1{2\eps_n}\|X_i -x_1  \|^2\right)}{
\sqrt{n}\mu^{1/2}(X_i)}, \ldots,\eps_n^{-d/4} \frac{ \exp\left(   - \tfrac1{2\eps_n}\|X_i -x_m  \|^2\right)}{
\sqrt{n}\mu^{1/2}(X_i)}  \right)^\top,   
\]
it is then a routine task to check that
\[
\sum_{i=1}^n \E \| Y_{n,i}\|^2 \1 \{\| Y_{n,i}\|>\delta\} \to 0, \text{ for every } \delta >0. 
\]
because of the assumption $\sqrt{n} \eps_n^{d/4}\sqrt{\log n} \to \infty$ as $n\to \infty$. Indeed, 
\begin{align*}
&\sum_{i=1}^n \E \| Y_{n,i}\|^2  \1 \{\| Y_{n,i}\|>\delta\}\\
&=
n \int \sum_{j=1}^m \eps_n^{-d/2} \frac{ \exp\left(   - \tfrac1{\eps_n}\|y -x_1  \|^2\right)}{
n }\\
&  \qquad \qquad\times
\1 \left\{\sum_{j=1}^m  \exp\left(   - \tfrac1{\eps_n}\|y -x_1  \|^2 + V(y) \right)>\delta^2\eps_n^{d/2} n \right\}
\diff y. 
\end{align*}
From this, we conclude that
\[
\sum_{i=1}^n(Y_{n,1} - \E Y_{n,1}, \ldots, Y_{n,m} - \E Y_{n,m} )^\top \dto \mathcal{N} ( 0_m, \pi^{d/2} I_m ) ,
\]
which is the sought limit for the quantity in Equation~\eqref{eq: MainObj} after rescaling by $\eps_n^{-d/4}$.

\textit{Step 2.} Even though the operator $K_{\eps_n}$ is an approximation of the identity, one cannot simply replace it by its limit. We thus now consider the questions arising from the presence of this (sequence of) inverse operator(s).
First remark that $(2+x)^{-1}$ is an analytic function of $x$ in a neighbourhood of zero. 
Thus, one can use a  series development for operators and get
\[
\big(2\id + (  K_{\eps_n} - \id )\big)^{-1} = \frac12 
\sum_{\kappa=0}^\infty (\id -  K_{\eps_n}  )^\kappa 2^{-\kappa}
\]
Looking at the above, 
we see that terms of the form
\[
(\id - K_{\eps_n})\left[
\int \pi_{\eps_n}(\cdot,y) \diff \GG_n(y)
\right](x)=:A+B, 
\]
will play a crucial role. 
As such computations will occur over and over again, observe that
\begin{multline*}
  B=  K_{\eps_n}\left[
\int \pi_{\eps_n}(\cdot,y) \diff \GG_n(y)
\right](x) \\
= \frac{1}{\mu^{1/2}(x)(1+\oh_x(1))} \int 
\frac{
e^{-\frac1{4 \eps_n} \|x-y\|^2}
}{
(4\pi\eps_n)^{d/2} \mu^{1/2}(y) (1+\oh_{y,z}(1))
} \\
\qquad \qquad\qquad \int \frac{
e^{-\frac1{\eps_n} \|x-z\|^2}
}{
(\pi\eps_n)^{d/2} 
} \ \diff z\ \diff\GG_n(y)\\
=
 \frac{1}{\mu^{1/2}(x)(1+\oh_x(1))} \int 
\frac{
e^{-\frac1{4 \eps_n} \|x-y\|^2}
}{
(4\pi\eps_n)^{d/2} \mu^{1/2}(y) (1+\oh_y(1))
}
\diff \GG_n(y) + \oh_p(1).
\end{multline*}
Its limiting distribution for fixed $x$, after multiplication by $\eps_n^{d/4}$, using the same argument as above, is a Gaussian random variable with variance equal to  $2^{-3d/2}\pi^{-d/2} \mu^{-1}(x)$. We further recall that the integrals of the $\oh(1)$ terms are actually negligible by our previous results on the potentials.

In general, 
\begin{align*}
&  K_{\eps_n}^\kappa[h](x_{\kappa+1})\\
&\qquad =(2\pi \eps_n)^{d\kappa/2} \mu^{-1/2}(x_{\kappa+1})\\
&\qquad\qquad\times 
\int \cdots \int \exp\left( -\frac1{2\eps_n} \sum_{j=2}^{k+1} \|x_{j}-x_{j-1} \|^2 \right) \left(1 + \sum_{j=1}^{\kappa+1} \oh_{x_j}(1) \right)\\
&\qquad\qquad\qquad\qquad\mu^{1/2}(x_1) h(x_1) \dd x_1 \cdots \dd x_{\kappa}.
\end{align*}
This multiple integral is a series of Gaussian convolution except for the integral with respect to $x_1$. We thus have, up to a negligible term,
\begin{align}
\label{eq: CompOp}
\nonumber
 & K_{\eps_n}^\kappa[h](x_{\kappa+1})\\
&= (2 \kappa \pi \eps_n)^{-d/2} \mu^{-1/2}(x_{\kappa+1})\int\exp\left( -\frac1{2\kappa\eps_n}  \|x_{\kappa+1}-x_{1} \|^2 \right) \mu^{1/2}(x_1) h(x_1) \dd x_1 .
\end{align}

Now, for any fixed $J$,
\begin{multline*}
\E\left[\left(\sum_{\kappa=0}^J 2^{-\kappa} (\id - K_{\eps_n}  )^\kappa [  h_n] \right)^2\right]\\
=  \sum_{0\le \kappa,\kappa' \le J } 2^{- \kappa-\kappa'} \E\left[ (\id -  K_{\eps_n})^\kappa [ h_n] (\id - K_{\eps_n})^{\kappa'} [  h_n] \right]\\
=  \sum_{0\le \kappa,\kappa' \le J } 2^{- \kappa-\kappa'} \E\left[ \sum_{\eta=0}^\kappa {\kappa \choose  \eta} ( - K_{\eps_n})^\eta [ h_n] \sum_{\eta'=0}^{\kappa'} {\kappa' \choose  \eta'} ( - K_{\eps_n})^{\eta'} [ h_n] \right]\\
=  \sum_{0\le \kappa,\kappa' \le J } 2^{- \kappa-\kappa'} \sum_{\eta=0}^\kappa\sum_{\eta'=0}^{\kappa'} {\kappa \choose  \eta} ( -1)^\eta ( -1)^{\eta'} {\kappa' \choose  \eta'} \E\left[ K_{\eps_n}^\eta [ h_n]  K_{\eps_n}^{\eta'} [ h_n]\right]  
\end{multline*}
By a direct computation similar to those above, we get 
\[
\E\left[ K_{\eps_n}^\eta [ h_n]  K_{\eps_n}^{\eta'}  [ h_n]\right] = \mu^{-1}(\cdot) (2\pi \eps_n)^{-d/2} ( \eta + \eta' +2)^{-d/2}(1+\oh(1)). 
\]
By the development above, setting 
\[
S_J(n) := \frac12 \eps_n^{d/4} \sum_{\kappa=0}^J 2^{-\kappa} (\id - K_{\eps_n}  )^\kappa [ h_n](x), 
\] 
one notices readily that $(S_J(n))_{J\in \NN}$
is Cauchy in $L^2$ with constants that can be made uniform in $n,\epsilon_n$ \textemdash recall that $x$ is fixed. 
For each fixed $J$, the distributional limit of the truncated series exists as a sum of finitely many random functions.
Denote by $\mathfrak{d}$ any distance metrizing convergence in distribution and by $X^J$ the limit as $n\to \infty$ of the truncated series $S_J(n)$. 
$X_J$ has a Gaussian distribution, one can thus add to it an independent Gaussian random variable $Y$ such that $X^J+Y$ has the Gaussian distribution which is the limiting distribution in Theorem 4; call a random variable with that distribution $X^\infty$. 
Therefore, by the triangle inequality, it holds 
\[
\mathfrak{d}( S_\infty(n), X^\infty) \le \mathfrak{d}( X^\infty, X^J) + \mathfrak{d}( S_J(n), X^J) +\mathfrak{d}( S_J(n), S_\infty(n)).
\]
As convergence in $L^2$ implies convergence in distribution, for all $\eps>0$, there exists $J$
such that 
\[
\mathfrak{d}( S_\infty(n), X^\infty) \le \eps/3+ \mathfrak{d}( S_J(n), X^J) + \eps/3.
\] 
Now, for $n$ sufficiently large, the middle term in the above display can be made smaller than $\eps/3$. 
Finally, recall the exact form of the asymptotically negligible terms in the linearization step above. As these terms also involve 
\[
\frac{k_{\eps_n}(x,y)}{(2\pi\eps_n)^{d/4}\sqrt{\mu(x)(1+\oh(1))}}, 
\]
applying $(\id + K_{\eps_n})^{-1}$ to these terms is valid and they remain asymptotically negligible after applying that operator. 
\end{proof}


\newpage
\section{Proof of Theorem~12}
\label{sec: Plans}

We start with a linearization result for the barycentric projections. 
\begin{prop}
\label{prop: fluctScore}
   Let us consider the exact same setting as in Theorem~\ref{thm: Plans}. Make the choice  
   $ a_n  :=\sqrt{n}\eps_n^{d/4} \eps_n^{1/2}$, with $a_n/\log (n) \to \infty$, as $n \to \infty$. Then, 
      \begin{align*}
&   a_n\eps_n\left(\nabla f_{\eps_n}(x) - \nabla \hat f_{\eps_n}(x) \right)\\
   &=
  -\frac{a_n\eps_n}{\sqrt{n}} \int (y-x) \frac{ \exp \left( - \tfrac1{2\eps_n}\|y -x  \|^2 \right)}{(2\pi\eps_n)^{d/2}
\mu^{1/2}(y)\mu^{1/2}(x)(1+\oh(1))
} \diff \mathbb{G}_n(y) 
\\&-a_n\left(
  \hat f_{\eps_n}(x) -
 f_{\eps_n}(x)
\right) K_{\eps_n} [(\id-x)](x) -  K_{\eps_n} \left[ (\id -x)\times   a_n\left(
  \hat f_{\eps_n}(\cdot) -
 f_{\eps_n}(\cdot)\right) \right](x)\\
 &+  \oh_p(1).
   \end{align*}
\end{prop}

\begin{proof}[Proof of Proposition~\ref{prop: fluctScore}]
By \citet[Proposition 2]{pooladian2021entropic}, 
we can express the gradient of the potentials as barycentric projections, so that 
\begin{align*}
a_n\eps_n&\left(\nabla f_{\eps_n}(x) - \nabla \hat f_{\eps_n}(x) \right)\\
&= a_n\eps_n\int (y-x) \diff \pi_{\eps_n}^x (y) - \int (y-x) \diff \hat\pi_{\eps_n}^x (y) \\
&= -a_n\eps_n\int (y-x) \exp\left(\frac1{\eps_n}  \left(f_{\eps_n}(x)+ f_{\eps_n}(y) - \frac12\|x-y\|^2\right)\right)\diff( \mu_n- \mu)(y)\\
&+ a_n\eps_n\int (y-x) k_{\eps_n}(x,y)\\
&\quad \times \left(\exp\left(
\frac1{\eps_n}(f_{\eps_n}(x)+ f_{\eps_n}(y))
\right)- \exp\left(
\frac1{\eps_n}( \hat f_{\eps_n}(x)+ \hat f_{\eps_n}(y)
\right) \right) \diff \mu_n(y)\\
&=: A+ B.
\end{align*}
$A$ is dealt with using the expansion for the potentials and we get
\[
A = -\frac{a_n\eps_n}{\sqrt{n}} \int (y-x) \frac{ \exp \left( - \tfrac1{2\eps_n}\|y -x  \|^2 \right)}{ (2\pi\eps_n)^{d/2}
\mu^{1/2}(y)\mu^{1/2}(x)(1+\oh(1))
} \diff \mathbb{G}_n(y).
\]
Let us now turn to $B$. It holds that
\begin{align*}
&B(x)=\\
&a_n\eps_n\int (y-x) k_{\eps_n}(x,y)\\
& \qquad \left(\exp\left(
\tfrac1{\eps_n}(f_{\eps_n}(x)+ f_{\eps_n}(y)
\right)- \exp\left(
\tfrac1{\eps_n}( \hat f_{\eps_n}(x)+ \hat f_{\eps_n}(y)
\right) \right) \diff \mu_n(y)\\
&=a_n\eps_n
\int (y-x) k_{\eps_n}(x,y)\exp\left(
\tfrac{f_{\eps_n}(x)+ f_{\eps_n}(y)}{\eps_n}
\right) \\ & \qquad\qquad  \times 
\left(1- \exp\left(
\tfrac{a_n}{\eps_na_n}(  \hat f_{\eps_n}(x)+ \hat f_{\eps_n}(y) -
 f_{\eps_n}(x)-  f_{\eps_n}(y) 
\right) \right) \diff \mu_n(y)\\
&=a_n\eps_n\int (y-x) k_{\eps_n}(x,y)\exp\left(
\frac1{\eps_n}(f_{\eps_n}(x)+ f_{\eps_n}(y)
\right) \\ 
& \qquad \qquad \times 
 \left(
\tfrac{-a_n}{\eps_na_n}(  \hat f_{\eps_n}(x)+ \hat f_{\eps_n}(y) -
 f_{\eps_n}(x)-  f_{\eps_n}(y) 
\right)  \diff \mu_n(y) \\
&+ \oh_p(1)  \\
&=
-a_n \left(  \hat f_{\eps_n}(x) - f_{\eps_n}(x)
\right)\int (y-x) k_{\eps_n}(x,y)\exp\left(
\tfrac1{\eps_n}(f_{\eps_n}(x)+ f_{\eps_n}(y)
\right)
  \diff \mu_n(y)  \\
  &+ \oh_p(1)\\
  &+\int (y-x) k_{\eps_n}(x,y)\exp\left(
\tfrac1{\eps_n}(f_{\eps_n}(x)+ f_{\eps_n}(y)
\right)
 \left(
-a_n(   \hat f_{\eps_n}(y) -  f_{\eps_n}(y) 
\right)  \diff \mu_n(y)
\\
&= -a_n\left(
  \hat f_{\eps_n}(x) -
 f_{\eps_n}(x)
\right) K_{\eps_n} [\id -x](x) -  K_{\eps_n} \left[(y-x) \ a_n\left(
  \hat f_{\eps_n}(y) -
 f_{\eps_n}(y)\right) \right](x) + \oh_p(1)\\
&
-\frac{a_n}{\sqrt{n}}(  \hat f_{\eps_n}(x) -
 f_{\eps_n}(x)
)\int (y-x) k_{\eps_n}(x,y)\exp\left(
\tfrac1{\eps_n}(f_{\eps_n}(x)+ f_{\eps_n}(y)
\right)
  \diff \mathbb{G}_n(y)\\
&+ \int (y-x) k_{\eps_n}(x,y)\exp\left(
\tfrac1{\eps_n}(f_{\eps_n}(x)+ f_{\eps_n}(y)
\right)
 \left(
-\frac{a_n}{\sqrt{n}}(   \hat f_{\eps_n}(y) -  f_{\eps_n}(y) 
\right)  \diff \mathbb{G}_n(y).
\end{align*}

The above linearization relying on $1 - e ^u = u + \Oh (u^2)$ is justifed by Lemma~\ref{lem: UniformConv}.
For the two last summands, notice first 
that while $a_n$ is chosen such that 
\[-\frac{a_n }{\sqrt{n} \eps_n }\int (y-x) k_{\eps_n}(x,y)\exp\left(
\tfrac1{\eps_n}(f_{\eps_n}(x)+ f_{\eps_n}(y)
\right)
  \diff \mathbb{G}_n(y)
  \]
  converges to a random variable, while
  $\eps_n^{-1}(  \hat f_{\eps_n}(x) -
 f_{\eps_n}(x)
) \to 0$
by Lemma~\ref{lem: UniformConv}.
By equation~\eqref{eq: LinOneMeas} the last term of the expansion reads
\[
-\tfrac{a_n\eps_n}{n} \int (y-x) \pi_{\eps_n}(x,y)
 (\id + K_{\eps_n})^{-1}[\pi_{\eps_n}(y,\cdot)](z) \diff \mathbb{G}_n(z)\diff \mathbb{G}_n(y).
\]
Note then that by our choice of bandwidth, implying that 
\[
\frac {a_n }{\sqrt{n}\eps_n^{d/4} \eps_n^{1/2}} \asymp 1,
\]
the scaling is too slow for the U-type statistics and the whole term is $\oh_p(1)$.

\end{proof}

\begin{proof}[Proof of Theorem~\ref{thm: Plans}]
The proof relies on the representation derived from Proposition~\ref{prop: fluctScore}.
The proof is split into three parts. \\
\underline{Step~1.} We start by stating the limit of 
\[
A_1(x):=\frac{a_n\eps_n}{\sqrt{n}} \int (y-x) \frac{ \exp \left( - \tfrac1{2\eps_n}\|y -x  \|^2 \right)}{(2\pi\eps_n)^{d/2}
\mu^{1/2}(y)\mu^{1/2}(x)(1+\oh(1))
} \diff \mathbb{G}_n(y). 
\]
We will invoke the Lindeberg--Feller central limit theorem.

Choose $a_n$ so that $a_n \eps_n^{1/2-d/4} /\sqrt{n} \to 1$, we remark that 
\[
\int  (y-x)(y-x)^\top \frac{ \exp \left( - \tfrac1{\eps_n}\|y -x  \|^2 \right)}{\eps_n(2\pi\eps_n)^{d/2}
\mu(y)\mu(x)(1+\oh(1))
} \diff \mu(y) = \mu^{-1}(x) 2^{-d/2}  I + \oh(1), 
\]
which is the first condition pertaining to the variance.  The tail condition also holds along the lines of the argument in the proof of Theorem~\ref{thm: multivariate_limit}. This yields, choosing $a_n$ as above, 
\begin{align*}
A_1(x) = \frac1{(2\pi)^{d/4}\eps_n^{1/2}}\int (y-x) \frac{ \exp \left( - \tfrac1{2\eps_n}\|y -x  \|^2 \right)}{\eps_n^{1/2}(2\pi\eps_n)^{d/4}
\mu^{1/2}(y)\mu^{1/2}(x)(1+\oh(1))
} \diff \mathbb{G}_n(y)\\
 \dto \mathcal{N}\big(0_d, \mu^{-1}(x) 2^{-d} \pi^{-d/2} I\big)
\end{align*}

\noindent
\underline{Step~2.}
We now study the term
\[
A_2(x):=K_{\eps_n} \left[ (\id -x) \ a_n\left(
  \hat f_{\eps_n}(\cdot) -
 f_{\eps_n}(\cdot)\right) \right](x)
\]
Observe that 
\begin{align*}
 &  A_2(x) \\
 &= -\frac{a_n\eps_n}{\sqrt{n}}\int (y-x) \pi_{\eps_n} (x, y) \\
 &\qquad\qquad  (\id +K_{\eps_n})^{-1}\left[\int  \frac{ \exp \left( - \tfrac1{2\eps_n}\|\cdot -z  \|^2 \right)}{(2\pi\eps_n)^{d/2}
\mu^{1/2}(z)\mu^{1/2}(\cdot)(1+\oh(1))
} \diff \mathbb{G}(z)
\right](y) \diff \mu(y)\\
&= -\frac{a_n\eps_n}{\sqrt{n}}\int   (\id +K_{\eps_n})^{-1}\Big[ (\id-x) \times\pi_{\eps_n} (x, \cdot)\Big](y) \\
& \qquad\qquad \qquad \int  \frac{ \exp \left( - \tfrac1{2\eps_n}\|y -z  \|^2 \right)}{(2\pi\eps_n)^{d/2}
\mu^{1/2}(z)\mu^{1/2}(y)(1+\oh(1))
} \diff \mathbb{G}(z)
\diff \mu(y),
\end{align*}
as for each $\eps_n$, $(\id +K_{\eps_n})^{-1}$ is a self-adjoint operator from $L^2(\mu)$ to $L^2(\mu)$.
Also, let us observe that 
\begin{align*}
&K_{\eps_n}\Big[ (\id-x) \times \pi_{\eps_n} (x, \cdot)\Big](y) \\
&=\int (z-x) \pi_{\eps_n}(y, z) \pi_{\eps_n}(x, z) \mu(z) \diff z\\
 &= \frac{1}{(4\pi\eps_n)^{d/2}\mu^{1/2}(x)\mu^{1/2}(y)} \int (z-x)\frac1{(\pi\eps_n)^{d/2}( 1+ \oh(1))} \\
&  \quad \quad  \quad \quad  \quad \quad  \quad \quad \quad \quad \times \exp\left( -\frac{\|x-z\|^2 + \|y-z\|^2}{2\eps_n} \right) \diff z\\
&= \frac{\exp\left( -\frac1{4\eps_n}  \|x-y \|^2 \right)}{(4\pi\eps_n)^{d/2}\mu^{1/2}(x)\mu^{1/2}(y)}\\
& \quad  \quad \quad \quad  \quad \quad \times  \int (z-x)\frac1{(\pi\eps_n)^{d/2}( 1+ \oh(1))} \exp\left( -\frac1{\eps_n} \left \|z- \frac{x+y}2\right\|^2 \right) \diff z.
\end{align*}
Therefore,
\begin{multline*}
K_{\eps_n}\Big[ (\id-x) \times \pi_{\eps_n} (x, \cdot)\Big](y) \\
=\frac{\exp\left( -\frac1{4\eps_n}  \|x-y \|^2 \right)}{(4\pi\eps_n)^{d/2}\mu^{1/2}(x)\mu^{1/2}(y)}\frac{y-x}2 (1+\oh(1)).
\end{multline*}
In full generality, for $\kappa> 1$,
\begin{align*}
&K_{\eps_n}^\kappa\Big[ (\id -x) \times \pi_{\eps_n} (x, \cdot)\Big](y) \\
&=K_{\eps_n}^{\kappa-1}\left[ \frac{\exp\left( -\frac1{4\eps_n}  \|\cdot-x \|^2 \right)}{(4\pi\eps_n)^{d/2}\mu^{1/2}(\cdot)\mu^{1/2}(x)}\frac{\cdot-x}2 (1+\oh(1))\right](y)\\
& = \frac{\mu^{-1/2}(x)\mu^{-1/2}(y)}{\big(2\pi (\kappa-1)\eps_n\big)^{d/2}\big(4\pi\eps_n\big)^{d/2}} \\
& \qquad \times \int \frac{z-x}{2( 1+ \oh(1))} \exp\left( -\frac{ \|x-z\|^2}{4\eps_n} -\frac{ \|y-z\|^2}{2(\kappa-1)\eps_n}\right) \diff z\\
&= \frac{\exp\left( -\frac1{2\eps_n(\kappa+1)}  \|x-y \|^2 \right)}{\big(2\pi (\kappa-1)\eps_n\big)^{d/2}\big(4\pi\eps_n\big)^{d/2}\mu^{1/2}(x)\mu^{1/2}(y)} \\
&\times\int \frac{z-x}{2( 1+ \oh(1))} \exp\left( -\frac{\kappa+1}{4\eps_n(\kappa-1)} \left \|z- \frac{(\kappa-1)x+2y}{\kappa+1}\right\|^2 \right) \diff z\\
&= \frac{\exp\left( -\frac1{2\eps_n(\kappa+1)}  \|x-y \|^2 \right)}{\big(4\pi\eps_n\big)^{d/2}\mu^{1/2}(x)\mu^{1/2}(y)} 
\left(\frac{2}{\kappa+1}\right)^{d/2} \left( \frac{y-x}{\kappa+1} \right)(1+ \oh(1))\\
\end{align*}
 where the second equality follows from \eqref{eq: CompOp}. 
 \begin{multline*}
K_{\eps_n}^\kappa\Big[ (\id -x) \times \pi_{\eps_n} (x, \cdot)\Big](y) \\
= \frac{\exp\left( -\frac1{2\eps_n(\kappa+1)}  \|x-y \|^2 \right)}{\big(2\pi(\kappa+1)\eps_n\big)^{d/2}\mu^{1/2}(x)\mu^{1/2}(y)} 
 \left( \frac{y-x}{\kappa+1} \right)( 1+ \oh(1))
\end{multline*}

As above, 
\begin{multline*}
(\id +K_{\eps_n})^{-1}\Big[ (\id -x)\times\pi_{\eps_n} (x, \cdot)\Big](y) \\
= \frac12 \sum_{\kappa=0}^\infty 2^{-\kappa}\sum_{\eta=0}^\kappa {\kappa \choose \eta} (-K_{\eps_n})^\eta \big[ (\id -x) \times\pi_{\eps_n} (x, \cdot)\Big](y),
\end{multline*}
so that 
\begin{align*}
&    A_2(x) \\
  &  =  \frac12 \sum_{\kappa=0}^\infty 2^{-\kappa}\sum_{\eta=0}^\kappa {\kappa \choose \eta} (- 1)^\eta  \iint K_{\eps_n}^\eta \big[ (\id-x) \times\pi_{\eps_n} (x, \cdot)\Big](y) \\
  & \qquad \qquad\qquad \qquad \qquad \qquad \qquad   \frac{ \exp \left( - \tfrac1{2\eps_n}\|y -z  \|^2 \right)}{(2\pi\eps_n)^{d/2}
\mu^{1/2}(z)\mu^{1/2}(y)
} \diff \mathbb{G}(z)
\diff \mu(y)\\
& =  \frac12 \sum_{\kappa=0}^\infty 2^{-\kappa}\sum_{\eta=0}^\kappa {\kappa \choose \eta} (- 1)^\eta  \\
& \times\! \iint \frac{\exp\left( -\frac1{2\eps_n(\eta+1)}  \|x-y \|^2 \right)}{\big(2\pi(\eta+1)\eps_n\big)^{d/2}\mu^{1/2}(x)\mu^{1/2}(y)} 
 \left(\frac{y-x}{\eta+1} \right) \\
 &\qquad \qquad\qquad \qquad \qquad \qquad \qquad  
 \frac{ \exp \left( - \tfrac1{2\eps_n}\|y -z  \|^2 \right)}{(2\pi\eps_n)^{d/2}
\mu^{1/2}(z)\mu^{1/2}(y)
} \diff \mathbb{G}(z)
\diff \mu(y)\\
& =  \frac12 \sum_{\kappa=0}^\infty 2^{-\kappa}\sum_{\eta=0}^\kappa {\kappa \choose \eta} (- 1)^\eta  \\
& \times\! \iint \frac{\exp\left( -\frac1{2\eps_n(\eta+1)}  \|x-y \|^2 \right)}{\big(2\pi(\eta+1)\eps_n\big)^{d/2}\mu^{1/2}(x)} 
 \left(  \frac{y-x}{\eta+1} \right)
 \frac{ \exp \left( - \tfrac1{2\eps_n}\|y -z  \|^2 \right)}{(2\pi\eps_n)^{d/2}
\mu^{1/2}(z)
}  \diff \mathbb{G}(z)
\diff y.
\end{align*}
Finally, using the same type of arguments as before, we get 
\begin{align*}
   & A_2(x) \\
& =  \frac12 \sum_{\kappa=0}^\infty 2^{-\kappa}\sum_{\eta=0}^\kappa {\kappa \choose \eta} (- 1)^\eta  \\
& \times \! \! \int \frac{\exp\left( -\frac1{2\eps_n(\eta+2)}  \|x-z \|^2 \right)}{\big(2\pi(\eta+2)\eps_n\big)^{d/2}\mu^{1/2}(x)\mu^{1/2}(z)} 
\frac{z-x}{\eta+2}
 \diff \mathbb{G}(z).
\end{align*}

\noindent
\underline{Step~3.} It remains to combine the elements. 
Setting 
\[
A_3(x) =a_n\left(
  \hat f_{\eps_n}(x) -
 f_{\eps_n}(x)
\right) K_{\eps_n} [\id-x](x),
\]
both steps above along with Theorem~\ref{thm: multivariate_limit}, guarantee the joint convergence, for fixed $x \in \R^d$, of the random vector 
$(A_1(x),A_2(x), A_3(x))^\top$ to a multivariate, centered Gaussian distribution. As 
\[
\sqrt{n}\, \eps_n^{d/4} \eps_n^{1/2} \Big( \nabla \hat{f}_{\eps_n} - \nabla {f}_{\eps_n} \Big) 
\]
is asymptotically equivalent to a linear combination of $(A_1(x),A_2(x), A_3(x))$, the claim follows.
\end{proof}

\section{Proof of Theorem~15}
\label{sec: LimOp}

\begin{proof}[Proof of Theorem~\ref{thm: LimOp}]
We start with preliminary developments helping towards proving the claim. We omit the negligible terms to simplify the exposition; it is however licit because of the assumptions made.
Let us use the previous expansions and the Laplace method to get
\begin{align*}
&K_{\eps_n}^{{\nu}}[h](x) \\
&= \int  \exp \left( \frac1{\eps_n} \big[ f_{\eps_n} (x) + g_{\eps_n}(y)  - \tfrac12 \| x -y \|^2 \big] \right) h(y) {\nu}(y) \diff y \\
&=  \int \frac{1}{(2\pi\eps_n)^{d/2}} \exp \left( \frac1{\eps_n} \big[ f_0(x)+  g_0(y)    - \tfrac12 \| x -y \|^2 \big] + \oh(1) \right) \\
& \qquad \qquad\times h(y)\nu(y)^{-1/2}\mu^{-1/2}(x) {\nu}(y) \diff y \\
&=  \int \frac{1}{(2\pi\eps_n)^{d/2}} \exp \left( - \frac1{2\eps_n}\big[ (y -x^*)^\top [\nabla^2 \phi_0(x)]^{-1}(y -x^*) +\oh(1)\big]+ \oh(1) \right) \\
& \qquad \qquad\times h(y)\nu(y)^{-1/2}\mu^{-1/2}(x) {\nu}(y) \diff y \\
&=  \big( \det  [\nabla^2 \phi_0(x) ]\big)^{1/2}  h(x^*)\nu(x^*)^{-1/2}\mu^{-1/2}(x) {\nu}(x^*) (1+\oh(1))\\
&=  \frac{  h(x^*){\nu}(x^*)}{\nu(x^*)} (1+\oh(1)) \\
&= h(x^*)(1+\oh(1))\,,
\end{align*}
where we recall the Monge--Ampère equation 
$
\mu(x) = \nu(x^*) \det [ \nabla^2 \phi_0(x)].
$
This type of expansions will be the cornerstone of the further developments.
Also, by symmetry, 
\begin{align*}
&K_{\eps_n}^{{\mu}}[h](y) \\
&= \int  \exp \left( \tfrac1{\eps_n} [ f_{\eps_n} (x) + g_{\eps_n}(y)  - \tfrac12 \| x -y \|^2 ] \right) h(x) {\mu}(x) \diff x \\
&\asymp \int \tfrac{1}{(2\pi\eps_n)^{d/2}} \exp \left( \tfrac1{\eps_n} [ f_0(x)+  g_0(y)    - \tfrac12 \| x -y \|^2 ]  \right)\frac{ h(x) \mu(x)}{ \nu(y)^{1/2}\mu^{1/2}(x)} \diff x \\
&\asymp \int \tfrac{1}{(2\pi\eps_n)^{d/2}} \exp \left(-  \frac1{2\eps_n} 
 \big(x-\nabla \phi_0^*(y)\big)^\top [\nabla^2 \phi_0^{*}(y)]^{-1}\big(x-\nabla \phi_0^*(y)\big)  \right) 
\\& \qquad \qquad \times \nu(y)^{-1/2}\mu^{1/2}(x) h(x) \diff x. 
\end{align*}
We now can turn to the main claim. 
\begin{align*}
&K_{\eps_n}^{{\mu}}\Big[K_{\eps_n}^{{\nu}}[h]\Big](y_2)\\
& \asymp \frac{1}{\nu^{1/2}(y_2)} \iint \frac{1}{(2\pi\eps_n)^d}  \\
& \times \exp \left(-  \frac1{2\eps_n} 
 \big(x-\nabla \phi_0^*(y_2)\big)^\top [\nabla^2 \phi_0^{*}(y_2)]^{-1}\big(x-\nabla \phi_0^*(y_2)\big)  \right) \\
& \times \exp \left(-  \frac1{2\eps_n} 
 \big(x-\nabla \phi_0^*(y)\big)^\top [\nabla^2 \phi_0^{*}(y)]^{-1}\big(x-\nabla \phi_0^*(y)\big) \right) \diff x \ h(y) \nu^{1/2}(y)\diff y \\
&  \asymp \frac{1}{\nu^{1/2}(y_2)} \int \frac{1}{(2\pi\eps_n)^d} 
\sqrt{ \frac{
    \det[ 2\pi \eps_n \nabla^2 \phi_0^{*}(y_2)]\det[ 2\pi \eps_n \nabla^2 \phi_0^{*}(y)]
}{
    \det [ 2\pi \eps_n \nabla^2 \phi_0^{*}(y_2)+\nabla^2 \phi_0^{*}(y)]
}
    } \ h(y)
 \\
& \times \exp \Big(-  \tfrac1{2\eps_n} 
 \big(\nabla \phi_0^*(y)-\nabla \phi_0^*(y_2)\big)^\top [\nabla^2 \phi_0^{*}(y_2) + \nabla^2\phi_0^{*}(y)]^{-1}\\
 &\qquad \qquad  \big(\nabla \phi_0^*(y)-\nabla \phi_0^*(y_2)\big)  \Big) \nu^{1/2}(y)\diff y,  
\end{align*}
where the last equivalence follows from the fact that the inner integral is a convolution of two Gaussians, one with mean $0$ and covariance matrix $\nabla^2 \phi_0^{*}(y_2)$ and the other one with mean $\nabla \phi_0^*(y)$ and covariance matrix $\nabla^2 \phi_0^{*}(y_2)$.
Further, 
\begin{align*}
&K_{\eps_n}^{{\mu}}\Big[K_{\eps_n}^{{\nu}}[h]\Big](y_2)\\
&  \asymp \frac{1}{\nu^{1/2}(y_2)} \int \frac{1}{(2\pi\eps_n)^{d/2}} 
\sqrt{ \frac{
    \det[ \nabla^2 \phi_0^{*}(y_2)]\det[  \nabla^2 \phi_0^{*}(y)]
}{
    \det [ \nabla^2 \phi_0^{*}(y_2)+\nabla^2 \phi_0^{*}(y)]
}
    } \ h(y)
 \\
& \times \exp \Big(-  \frac1{2\eps_n} 
\big(\nabla \phi_0^*(y)-\nabla \phi_0^*(y_2)\big)^\top [\nabla^2 \phi_0^{*}(y_2) + \nabla^2\phi_0^{*}(y)]^{-1}\\
& \qquad \qquad \big(\nabla \phi_0^*(y)-\nabla \phi_0^*(y_2)\big)  \Big) 
\nu^{1/2}(y)\diff y. 
\end{align*}
Thus, owing to Lipschitzianity of $\nabla^2 \phi_0^{*}$ thanks to Caffarelli's regularity theory recalled in the introduction, 
\begin{align*}
&K_{\eps_n}^{{\mu}}\Big[K_{\eps_n}^{{\nu}}[h]\Big](y_2)\\
 & \quad \asymp \frac{1}{\nu^{1/2}(y_2)} \int \frac{1 }{(2\pi\eps_n)^{d/2}} 
 \ h(y)  
 \sqrt{ \frac{
    \det[ \nabla^2 \phi_0^{*}(y_2)]\det[  \nabla^2 \phi_0^{*}(y)]
}{
    \det [ \nabla^2 \phi_0^{*}(y_2)+\nabla^2 \phi_0^{*}(y)]
}
    }\\
&\quad \times \exp \Big(-  \frac1{4\eps_n} 
\big(\nabla \phi_0^*(y)-\nabla \phi_0^*(y_2)\big)^\top [\nabla^2 \phi_0^{*}(y_2) + \oh(1)]^{-1}\\
&\qquad \qquad \big(\nabla \phi_0^*(y)-\nabla \phi_0^*(y_2)\big)  \Big) \nu^{1/2}(y) \diff y,
\end{align*}
where the $\oh(1)$ in the display above is $\Oh (\|y-y_2\|)$.
The claim follows owing to the Bregman duality, i.e., 
\begin{align*}
& \big(\nabla \phi_0^*(y)-\nabla \phi_0^*(y_2)\big)^\top \big[\nabla^2 \phi_0^*(y_2) +\oh(1)\big]^{-1}\big(\nabla \phi_0^*(y)-\nabla \phi_0^*(y_2)\big)\\
& \qquad \qquad = (y - y_2)^\top \big[\nabla^2 \phi_0^{*}(y_2) \big](y - y_2) +  \oh(\|y - y_2\|^2),
\end{align*}
which comes from the fact that both terms of the equality above are quadratic approximations of the Bregman divergence $\phi_0(y) +  \psi_0(x) - \langle x, y\rangle$ in one or the other variable.
\end{proof}

\section{Proof of Theorem~18}

\begin{proof}[Proof of Theorem~\ref{thm: twoMeasures}]

Let us decompose the proof into its two main statements.

\textit{First claim}.
The linearization of the fixed point equation has been established in Proposition~\ref{prop: Inversion}. 
Therefore, the first part of the claim of Theorem~\ref{thm: twoMeasures} follows from  plugging-in the result of Proposition~\ref{prop: SinkFluctKer} into Proposition~\ref{prop: Inversion} and invoking a Lindeberg--Feller central limit theorem just as in the one-measure case.

\textit{Second claim}.
For the second part of the claim, the reasoning is similar. There just remains to consider the impact of taking the gradient.

First remark that $x \mapsto \pi_{\eps_n}(x, y)$ is $\cC^1$ for every $\eps_n$. Note that the operator $(\id - K_{\eps_n}^\nu K_{\eps_n}^\mu)^{-1}$ applied to $\pi_{\eps_n}$ pertains to the $x$-variable only. 
Differentiating $\pi_{\eps_n}(x, y)$ in $x$ exactly, using
$\nabla_x D(x, y) = \nabla \phi_0(x) - y = x^* - y$,
\[
\nabla_x \pi_{\eps_n}(x, y) \asymp \frac{y - x^*}{\eps_n} \pi_{\eps_n}(x, y) 
\]
where the lower-order term that we can neglect come from differentiating the prefactor
$\mu(x)^{-1/2}$. We rewrite the leading factor as a $y$-derivative.
Taylor-expanding $T_0^{-1}$ around $x^*$, with
$\nabla T_0^{-1}(x^*) = (\nabla^2 \phi_0(x))^{-1}$,
\[
\nabla_y D(x, y) = T_0^{-1}(y) - x = (\nabla^2 \phi_0(x))^{-1} (y - x^*) + O(\|y - x^*\|^2),
\]
hence $y - x^* = \nabla^2 \phi_0(x) \nabla_y D(x, y) + O(\|y - x^*\|^2)$ and
\[
\nabla_x \pi_{\eps_n}(x, y) \asymp - \nabla^2 \phi_0(x) \nabla_y \pi_{\eps_n}(x, y) 
\]
On the effective support $\|y - x^*\| \sim \sqrt{\eps_n}$, the Taylor
remainder contributes a relative $\sqrt{\eps_n}$ correction to the leading
$\eps_n^{-1/2}$-sized term. The fluctuation kernel
$(\id - K^\nu_{\eps_n} K^\mu_{\eps_n})^{-1}[\pi_{\eps_n}(x, \cdot)](y)$
inherits the same identity, since the resolvent acts on $y$ and commutes
with the matrix $\nabla^2 \phi_0(x)$, which is constant in $y$. In Fourier
this becomes multiplication by $-i 2\pi \nabla^2 \phi_0(x) \xi$.  This explains the form of the kernel.

After proper scaling, the sequence of kernels is $L^2$ integrable and the conditions to apply the Linderberg--Feller central limit theorem are met.
\end{proof}

\end{document}